\documentclass[12pt]{article}

\setlength{\textwidth}{6.5in}
\setlength{\topmargin}{-0.5in}
\setlength{\textheight}{9.0in}

\setlength{\oddsidemargin}{0in}

\usepackage{amsmath}
\usepackage{amssymb}
\usepackage{amsthm}
\usepackage{cases}
\usepackage[dvips]{graphicx}
\usepackage{bm}

\newtheorem{theorem}{Theorem}%[section]
\newtheorem{proposition}[theorem]{Proposition}

\newtheorem{lemma}[theorem]{Lemma}
\theoremstyle{definition}
\newtheorem{definition}[theorem]{Definition}

\newtheorem{remark}[theorem]{Remark}
\newtheorem{example}[theorem]{Example}

\newcommand{\N}{\mathbb{N}}

\newcommand{\Z}{\mathbb{Z}}

\newcommand{\SL}{{\mbox{\rm SL}}}

\begin{document}

\title{Homology for Quandles with Partial Group Operations}
\author{Scott Carter${}^1$, Atsushi Ishii${}^2$, Masahico Saito${}^3$ and Kokoro Tanaka${}^4$ \\[3mm]
${}^1$University of South Alabama \ 
${}^2$University of Tsukuba \\
${}^3$University of South Florida \  ${}^4$Tokyo Gakugei University
}

\date{}

\maketitle

\begin{abstract}
A quandle is a set that has a binary operation satisfying three conditions
corresponding to the Reidemeister moves.
Homology theories of quandles have been developed in a way similar to group homology, and have been applied to knots and knotted surfaces. 
In this paper, a homology theory is defined that unifies group and quandle homology theories.
A quandle that is a union of groups with the operation restricting to conjugation on each group component 
is called a multiple conjugation quandle (MCQ, 
defined rigorously within). 
In this definition, 
 compatibilities between the group and quandle operations are imposed which are motivated by considerations on colorings of handlebody-links. 
A homology theory defined here for MCQs take into consideration both group and quandle operations,
as well as their compatibility. 
 The first homology group is characterized, and the notion of extensions by $2$-cocycles is provided. Degenerate subcomplexes are defined 
in relation to simplicial decompositions of prismatic (products of simplices) complexes and group inverses. 
Cocycle invariants are 
also 
defined for handlebody-links.  
\end{abstract}

%%%%%%%%%%%%%%%%%%%%%%%%%%%%%%%%%%%%%%%
%%%%%%%%%%%%%%%%%%%%%%%%%%%%%%%%%%%%%%%
\section{Introduction}

In this paper, a homology theory is proposed that contains aspects of both group and quandle 
homology theories, for algebraic structures 
that have both operations  and  certain 
compatibility conditions between them.

The notion of a quandle \cite{Joyce82,Matveev82} was introduced in knot theory
as a generalization of  the fundamental group.
Briefly, a quandle is a set with a binary operation that is idempotent, self-distributive, and
the corresponding right action is bijective. 
The  axioms  correspond  to  the Reidemeister moves, and quandles have been  used extensively 
to construct  knot invariants.
They have been considered in various other contexts, for example as symmetries of 
geometric objects \cite{Takasaki42}, and 
 with different names,  such as  distributive groupoids~\cite{Matveev82} and 
 automorphic sets~\cite{Bries}. 
A typical example is a group conjugation $a*b=b^{-1}ab$ which is an expression of the Wirtinger relation for the 
fundamental group of the knot complement.
The same structure but without idempotency is called a rack, and  
is used  in the study of  framed links~\cite{FR}.

In~\cite{FRS95} a chain complex was introduced for racks.
The resulting homology theory was modified in~\cite{CJKLS} by defining a quotient complex that reflected the quandle 
idempotence  axiom.  
The motivation for this homology was to construct the quandle cocycle invariants for links and surface-links.
Since then a variety of applications have been found.
The quandle cocycle invariants were generalized to handlebody-links in
\cite{IshiiIwakiri12}.
When a set has multiple 
quandle operations that are parametrized by a group, the structure is called a $G$-family of quandles; this notion  and its associated homology theory was introduced in \cite{IshiiIwakiriJangOshiro13} and it too was motivated from handlebody-knots.  
This homology theory is called {\it IIJO}. 
In particular, cocycle invariants were introduced that distinguished 
mirror images of 
some of  handlebody-knots. 
These $G$-families were further generalized to 
an algebraic system  called a multiple conjugation quandle (MCQ) in \cite{Ishii_MCQ} for colorings of handlebody-knots.
An MCQ has a quandle operation and partial group operations, with compatibility conditions among the operations.

The current paper proposes to unify the group and quandle homology theories for MCQs. 
The definition of an MCQ 
 is recalled in Section~\ref{sec:MCQ} as a generalization of a $G$-family of quandles. 
A homology theory is defined (in Section~\ref{sec:homology}) that simultaneously encompasses the group 
and quandle homologies of the interrelated structures. 
As in the case of \cite{CJKLS}, some subcomplexes are defined in order to compensate 
for the topological motivation of the theory. 
The first homology group is characterized, 
and the notion of extensions by $2$-cocycles is provided in Section~\ref{sec:alg}.

The homology theory for MCQ is well suited for handlebody-links such that 
each toroidal component has its core circle oriented, as defined  
in Section~\ref{sec:coloring}.  
When considering colorings for unoriented handlebody-links, 
we also need to take into consideration issues 
about the inverse elements in the group (Section~\ref{sec:unori}). 
Prismatic sets (products of simplices) 
are decomposed into sub-simplices, 
that are higher dimensional analogues of graph moves;
Section~\ref{sec:decomp} defines a subcomplex that compensates for these subdivisions. 
In Section~\ref{sec:G} and Section~\ref{sec:IIJO}, 
we relate this homology theory with group and quandle homology theories. 
Finally, in Section~\ref{sec:attempt}, we discuss 
approaches to finding new $2$-cocycles of our homology theory.

%%%%%%%%%%%%%%%%%%%%%%%%%%%%%%%%%%%%%%%
%%%%%%%%%%%%%%%%%%%%%%%%%%%%%%%%%%%%%%%
\section{Multiple conjugation quandles}
\label{sec:MCQ}

First, recall a \textit{quandle}~\cite{Joyce82,Matveev82}, is a non-empty set $X$ with a binary operation $*:X\times X\to X$ satisfying the following axioms.
\begin{itemize}
\item[(1)] For any $a\in X$, we have $a*a=a$.
\item[(2)] For any $a\in X$, the map $S_a:X\to X$ defined by $S_a(x)=x*a$ is a bijection.
\item[(3)] For any $a,b,c\in X$, we have $(a*b)*c=(a*c)*(b*c)$.
\end{itemize}

\begin{definition}[\cite{Ishii_MCQ}]
A \textit{multiple conjugation quandle (MCQ)} $X$ is the disjoint union of groups $G_\lambda$,
where $\lambda$ is an element of an index set $\Lambda$, 
with a binary operation $*:X\times X\to X$ satisfying the following axioms.
\begin{itemize}
\item[(1)] For any $a,b\in G_\lambda$, we have $a*b=b^{-1}ab$.
\item[(2)] For any $x\in X$, $a,b\in G_\lambda$, we have $x*e_\lambda=x$ and $x*(ab)=(x*a)*b$, where $e_\lambda$ is the identity element of $G_\lambda$.
\item[(3)] For any $x,y,z\in X$, we have $(x*y)*z=(x*z)*(y*z)$.
\item[(4)] For any $x\in X$, $a,b\in G_\lambda$, 
we have $(ab)*x=(a*x)(b*x)$ in some group $G_\mu$.
\end{itemize}
\end{definition}

We call the group $G_\lambda$ a \textit{component} of the MCQ.
An MCQ is a type of quandles that can be decomposed as a union of groups, and the quandle operation in each component is given by conjugation.
Moreover, there are compatibilities, (2) and (4),  between the group and quandle operations.

Note that the quandle axiom $a*a=a$ follows immediately since the operation in any component is given by conjugation.
The second quandle axiom also follows, since for the map $S_a: X \rightarrow X$ defined by $S_a(x)=x*a$,
the inverse map is given by $S_{a^{-1}}$.
The second axiom of MCQ  
implies that the map $\varphi:G_\lambda\to\operatorname{Aut}_\mathsf{Qnd}X$ defined by $\varphi(a)=S_a$ is a group homomorphism, where $\operatorname{Aut}_\mathsf{Qnd}X$ is the set of quandle automorphisms of $X$ and is the group with the multiplication defined by $S_aS_b:=S_b\circ S_a$. 
The last axiom (4) may be replaced with
\begin{itemize}
\item[($4'$)]
For any $x\in X$ and $\lambda\in\Lambda$, there is a unique element $\mu\in\Lambda$ such that $S_x(G_\lambda)=G_\mu$ and that $S_x:G_\lambda\to G_\mu$ is a group isomorphism.
\end{itemize}
The axiom (4) immediately follows from $(4')$. 
Conversely, $(4')$  
follows from $(4)$: 
The condition (4) contains the condition that for any $a, b \in G_\lambda$ and $x \in X$, there exists a unique $\mu \in \Lambda$ such that $a*x, b*x \in G_\mu$. Hence we have 
 $S_x(G_\lambda)\subset G_\mu$, which implies that $S_x:G_\lambda\to G_\mu$ is a well-defined group homomorphism by the condition $(ab)*x=(a*x)(b*x)$.
The homomorphism $S_x:G_\lambda\to G_\mu$ is a group isomorphism, since $S_{x^{-1}}:G_\mu\to G_\lambda$ gives its inverse.

A multiple conjugation quandle can be obtained from a $G$-family of quandles as follows.

\begin{example} \label{ex:mcq from G-family}
Let $G$ be a group with identity element $e$, let $(M,\{*^g\}_{g\in G})$ be a $G$-family of quandles~\cite{IshiiIwakiriJangOshiro13}; \textit{i.e.} a non-empty set $M$ with a family of binary operations $*^g:M\times M\to M$ ($g\in G$) satisfying
\begin{align*}
&x*^gx=x,~~~x*^{gh}y=(x*^gy)*^hy,~~~x*^ey=x, \\
&(x*^gy)*^hz=(x*^hz)*^{h^{-1}gh}(y*^hz)
\end{align*}
for $x,y,z\in M$, $g,h\in G$.
Then $\coprod_{x\in M}\{x\}\times G$ is a multiple conjugation quandle with
\begin{align*}
&(x,g)*(y,h)=(x*^hy,h^{-1}gh),
&&(x,g)(x,h)=(x,gh).
\end{align*}

The following are specific examples of $G$-families of quandles.
\begin{itemize}
\item[(1)]
Let $M$ be a group, and $G$ be a subgroup of $\operatorname{Aut}M$.
Then for $x,y\in M$ and $g\in G$, $x*y=(xy^{-1})^gy$ gives a $G$-family of quandles.
Here $x^g$ denotes $g$ acting on $x$.
The fact that this is a $G$-family was pointed out by Przytycki (cf.~\cite{Przytycki11}); however, that any specific automorphism $g$ yields a quandle was 
earlier observed in  \cite{Joyce82,Matveev82}.  
When $M$ is abelian and an element $g\in G$ is fixed, the resulting quandle is called an Alexander quandle.
\item[(2)]
Let $(X,*)$ be a quandle.
We denote $S_b^n(a)$ by $a*^nb$.
Put $Z:=\mathbb{Z}$ or $\mathbb{Z}/m\mathbb{Z}$, where
$m:=\min\{i>0\,|\,\text{$x*^iy=x$ for any $x,y\in X$}\}$.
Then $(X,\{*^n\}_{n\in Z})$ is a $Z$-family of quandles.
\end{itemize}
\end{example}

For a multiple conjugation quandle $X=\coprod_{\lambda\in\Lambda}G_\lambda$, an \textit{$X$-set} is a non-empty set $Y$ with a map $*:Y\times X\to Y$ satisfying the following axioms, where 
we use the same symbol $*$ as the binary operation of $X$.
\begin{itemize}
\item For any $y\in Y$ and $a,b\in G_\lambda$, we have $y*e_\lambda=y$ and $y*(ab)=(y*a)*b$, where $e_\lambda$ is the identity of $G_\lambda$.
\item For any $y\in Y$ and $a,b\in X$, we have $(y*a)*b=(y*b)*(a*b)$.
\end{itemize}
Any multiple conjugation quandle $X$ itself is an $X$-set with its binary operation.
Any singleton set $\{y_0\}$ is also an $X$-set with the map $*$ defined by $y_0*x=y_0$ for $x\in X$, which is 
called a trivial $X$-set.
The index set $\Lambda$ is an $X$-set with the map $*$ defined by 
$ \lambda *x = \mu$ when $S_x(G_\lambda)=G_\mu $ for $\lambda, \mu \in\Lambda$ and $x\in X$.

%%%%%%%%%%%%%%%%%%%%%%%%%%%%%%%%%%%%%%%
%%%%%%%%%%%%%%%%%%%%%%%%%%%%%%%%%%%%%%%
\section{Homology theory}
\label{sec:homology}

In this section, we define a chain complex for MCQs that contains aspects of both group and quandle 
homology theories. A subcomplex is also defined that corresponds to a Reidemeister move for handlebody-links.

Let $X=\coprod_{\lambda\in\Lambda}G_\lambda$ be a multiple conjugation quandle, and let $Y$ be an $X$-set.
In what follows, we denote a sequence of elements of $X$ by a bold symbol such as $\boldsymbol{a}$, and denote by $|\boldsymbol{a}|$ the length of a sequence $\boldsymbol{a}$.
For example, $(\boldsymbol{a})$, $\langle\boldsymbol{a}\rangle$,
$(y;\boldsymbol{a};\boldsymbol{b})$ respectively denote
\begin{align*}
&(a_1,\ldots,a_{|\boldsymbol{a}|}),
&&\langle a_1,\ldots,a_{|\boldsymbol{a}|}\rangle,
&&(y;a_1,\ldots,a_{|\boldsymbol{a}|};b_1,\ldots,b_{|\boldsymbol{b}|}).
\end{align*}
Let $P_n(X)_Y$ be the free abelian group generated by the elements
\[ (y;a_{1,1},\ldots,a_{1,n_1};\ldots;a_{k,1},\ldots,a_{k,n_k})\in
\bigcup_{n_1+\cdots+n_k=n}Y\times\prod_{i=1}^k
\bigcup_{\lambda\in\Lambda}G_\lambda^{n_i} \]
if $n\geq0$, and let $P_n(X)_Y=0$ otherwise.
The generators of $P_n(X)_Y$ are called \textit{prismatic chains} and $P_n(X)_Y$ is called the \textit{prismatic chain group}.
Note that for each $j$, the elements $a_{j,1},\ldots,a_{j,n_j}$ belong to one of $G_\lambda$'s.
For example, $P_3(X)_Y$ is generated by the elements $(y;a;b;c)$, $(y;a;e,f)$, $(y;d,e;c)$ and $(y;d,e,f)$ ($a,b,c\in X$, $d,e,f\in G_\lambda$, $y\in Y$).
Here $a,b,c$ may or may not belong to the same $G_\mu$ ($\mu \in \Lambda$), but $d,e,f$ belong to the same $G_\lambda$.
All may belong to the same $G_\lambda$.

We use the noncommutative multiplication form
\[ \langle y\rangle
\langle\boldsymbol{a}_1\rangle\cdots\langle\boldsymbol{a}_k\rangle \]
to represent $(y;\boldsymbol{a}_1;\ldots;\boldsymbol{a}_k)$.
We define
\[ \langle y\rangle
\langle\boldsymbol{a}_1\rangle\cdots\langle\boldsymbol{a}_k\rangle*b
:=\langle y*b\rangle\langle
\boldsymbol{a}_1*b\rangle\cdots\langle\boldsymbol{a}_k*b\rangle, \]
where $\langle\boldsymbol{a}*b\rangle$ denotes $\langle a_1*b,\ldots,a_{|\boldsymbol{a}|}*b\rangle$.
We set $|\langle y\rangle
\langle\boldsymbol{a}_1\rangle\cdots\langle\boldsymbol{a}_k\rangle|
:=|\boldsymbol{a}_1|+\cdots+|\boldsymbol{a}_k|$.

We define a boundary homomorphism $\partial_n:P_n(X)_Y\to P_{n-1}(X)_Y$ by
\[ \partial(\langle y\rangle
\langle\boldsymbol{a}_1\rangle\cdots\langle\boldsymbol{a}_k\rangle)
=\sum_{i=1}^k(-1)^{|\langle y\rangle
\langle\boldsymbol{a}_1\rangle\cdots\langle\boldsymbol{a}_{i-1}\rangle|}
\langle y\rangle
\langle\boldsymbol{a}_1\rangle\cdots\partial\langle\boldsymbol{a}_i\rangle
\cdots\langle\boldsymbol{a}_k\rangle, \]
where
\begin{align*}
\partial\langle a_1,\ldots,a_m\rangle
&=*a_1\langle a_2,\ldots,a_m\rangle
+\sum_{i=1}^{m-1}(-1)^i\langle a_1,\ldots,a_ia_{i+1},\ldots,a_m\rangle \\
&\hspace{5mm}+(-1)^m\langle a_1,\ldots,a_{m-1}\rangle.
\end{align*}
The resulting terms
$\partial(\langle a\rangle)=*a\langle~\rangle-\langle~\rangle$
for $m=1$ in the above expression means that the formal symbol $\langle~\rangle$ is deleted.
For $n=0$, we define $\partial\langle y\rangle=0$.

\begin{example}
The boundary maps in 2- and 3-dimensions are computed as follows. 
\begin{align*}
\partial_2(\langle y\rangle\langle a\rangle\langle b\rangle)
&=\langle y*a\rangle\langle b\rangle-\langle y\rangle\langle b\rangle
-\langle y*b\rangle\langle a*b\rangle+\langle y\rangle\langle a\rangle, \\
\partial_2(\langle y\rangle\langle a,b\rangle)
&=\langle y*a\rangle\langle b\rangle-\langle y\rangle\langle ab\rangle
+\langle y\rangle\langle a\rangle, \\
\partial_3(\langle y\rangle\langle a\rangle\langle b\rangle\langle c\rangle)
&=\langle y*a\rangle\langle b\rangle\langle c\rangle
-\langle y\rangle\langle b\rangle\langle c\rangle
-\langle y*b\rangle\langle a*b\rangle\langle c\rangle \\
&\hspace{5mm}+\langle y\rangle\langle a\rangle\langle c\rangle
+\langle y*c\rangle\langle a*c\rangle\langle b*c\rangle
-\langle y\rangle\langle a\rangle\langle b\rangle, \\
\partial_3(\langle y\rangle\langle a\rangle\langle b,c\rangle)
&=\langle y*a\rangle\langle b,c\rangle
-\langle y\rangle\langle b,c\rangle
-\langle y*b\rangle\langle a*b\rangle\langle c\rangle \\
&\hspace{5mm}+\langle y\rangle\langle a\rangle\langle bc\rangle
-\langle y\rangle\langle a\rangle\langle b\rangle, \\
\partial_3(\langle y\rangle\langle a,b\rangle\langle c\rangle)
&=\langle y*a\rangle\langle b\rangle\langle c\rangle
-\langle y\rangle\langle ab\rangle\langle c\rangle
+\langle y\rangle\langle a\rangle\langle c\rangle \\
&\hspace{5mm}+\langle y*c\rangle\langle a*c,b*c\rangle
-\langle y\rangle\langle a,b\rangle, \\
\partial_3(\langle y\rangle\langle a,b,c\rangle)
&=\langle y*a\rangle\langle b,c\rangle
-\langle y\rangle\langle ab,c\rangle
+\langle y\rangle\langle a,bc\rangle
-\langle y\rangle\langle a,b\rangle.
\end{align*}
\end{example}

\begin{proposition}
$P_*(X)_Y=(P_n(X)_Y,\partial_n)$ is a chain complex.
\end{proposition}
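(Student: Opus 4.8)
The only substantive point is the identity $\partial_{n-1}\circ\partial_n=0$; that each $\partial_n$ is a well-defined homomorphism (every term it produces is again a prismatic chain) is immediate from axioms $(4)$ and $(4')$, since $*a$ maps each component $G_\lambda$ isomorphically onto some $G_\mu$ and sends products to products, so both the action faces $\langle\cdots\rangle*a$ and the multiplication faces $\langle\ldots,a_\ell a_{\ell+1},\ldots\rangle$ remain inside single components.

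To organize the computation I would write $\partial=\sum_{i=1}^{k}\partial^{(i)}$ on a generator $\langle y\rangle\langle\boldsymbol{a}_1\rangle\cdots\langle\boldsymbol{a}_k\rangle$, where $\partial^{(i)}$ carries the sign $(-1)^{|\langle y\rangle\langle\boldsymbol{a}_1\rangle\cdots\langle\boldsymbol{a}_{i-1}\rangle|}$ and replaces the block $\langle\boldsymbol{a}_i\rangle$ by $\partial\langle\boldsymbol{a}_i\rangle$, with the understanding that the leftmost (``action'') face $*a_{i,1}$ is applied to everything standing to the left of block $i$. Then
\[
\partial^2=\sum_{i}\bigl(\partial^{(i)}\bigr)^2+\sum_{i\neq j}\partial^{(j)}\partial^{(i)},
\]
and I would treat the diagonal and the off-diagonal contributions separately. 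A useful preliminary remark is that every one of the three faces (action, multiplication, rightmost) lowers the length of the block it acts on by exactly $1$; this uniform shift is what makes the signs cooperate below.

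For the diagonal terms I would observe that the single-block operator
\[
\langle\boldsymbol{a}_i\rangle\longmapsto {*a_{i,1}}\langle a_{i,2},\ldots\rangle+\sum_{\ell}(-1)^{\ell}\langle\ldots,a_{i,\ell}a_{i,\ell+1},\ldots\rangle+(-1)^{n_i}\langle a_{i,1},\ldots,a_{i,n_i-1}\rangle
\]
is exactly the inhomogeneous bar differential of the group $G_\lambda$ with coefficients in the set of left-contexts, on which $G_\lambda$ acts on the right by $L\mapsto L*a$. Axiom $(2)$ gives $L*e_\lambda=L$ and $L*(ab)=(L*a)*b$, so this is a genuine right action; together with associativity in $G_\lambda$ the standard telescoping of bar terms yields $(\partial^{(i)})^2=0$, the only interaction with the action being $(L*a_{i,1})*a_{i,2}=L*(a_{i,1}a_{i,2})$, again from $(2)$.

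The off-diagonal terms are the crux, and I expect the sign bookkeeping together with the interaction of two action faces to be the main obstacle. Fixing $j<i$, I would pair the contribution of $\partial^{(j)}\partial^{(i)}$ with that of $\partial^{(i)}\partial^{(j)}$. Multiplication and rightmost faces are local to their own block, so the only coupling between distinct blocks is the action face $*a_{i,1}$ of block $i$ acting on block $j$ and on the common left-context $C$. Three checks then suffice: against a multiplication face of block $j$ one needs $(a_{j,\ell}a_{j,\ell+1})*a_{i,1}=(a_{j,\ell}*a_{i,1})(a_{j,\ell+1}*a_{i,1})$, which is axiom $(4)$; against the rightmost face the commutation is trivial; and against the action face of block $j$ one needs $(C*a_{j,1})*a_{i,1}=(C*a_{i,1})*(a_{j,1}*a_{i,1})$, which is precisely self-distributivity, axiom $(3)$, applied entrywise to $C$. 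With these identities the two orders produce identical chains. Finally, because applying $\partial^{(j)}$ first shortens block $j$ by one and thereby shifts the exponent governing $\partial^{(i)}$, whereas applying $\partial^{(i)}$ first does not affect the exponent of $\partial^{(j)}$ (block $i$ lying to the right of block $j$), the two signs are opposite, so the paired terms cancel. Combining the vanishing of the diagonal squares with the pairwise cancellation of the off-diagonal terms gives $\partial^2=0$.
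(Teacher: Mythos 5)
Your proof is correct, and it is actually more complete than the paper's own argument, which is deliberately terse. The paper takes a structural route: it observes that the definition of $\partial$ is equivalent to the graded Leibniz rule $\partial(\sigma\tau)=(\partial\sigma)\tau+(-1)^{|\sigma|}\sigma(\partial\tau)$ (the case $k=2$, with the general case by induction) together with the equivariance $\partial(\sigma*a)=(\partial\sigma)*a$, and deduces $\partial\circ\partial=0$ from these two facts, leaving the single-block computation and the verification of equivariance to the reader. Your face-operator decomposition $\partial=\sum_i\partial^{(i)}$ expands exactly this: your off-diagonal pairing, i.e.\ the anticommutation $\partial^{(i)}\partial^{(j)}=-\partial^{(j)}\partial^{(i)}$ for $i\neq j$ with its sign analysis, is precisely the cancellation of the Leibniz cross terms, and your two nontrivial commutation checks --- action versus multiplication face via axiom (4), action versus action face via axiom (3) --- are precisely the content of the paper's equivariance $\partial(\sigma*a)=(\partial\sigma)*a$; your diagonal computation $(\partial^{(i)})^2=0$, the bar differential of $G_\lambda$ with coefficients in left-contexts using axiom (2) and associativity, is the single-block case on which the paper's induction implicitly bottoms out. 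What the paper's formulation buys is brevity and reusability (the same Leibniz formalism reappears for the degeneracy subcomplex in Lemma~\ref{lem:decomp}); what yours buys is explicitness about which MCQ axiom funds which cancellation, plus the well-definedness of $\partial_n$ via axiom $(4')$, which the paper does not address at all. One small refinement: when you invoke axiom (3) ``entrywise on $C$,'' note that the left-context contains the entry $\langle y\rangle$ with $y\in Y$, for which the needed identity $(y*a)*b=(y*b)*(a*b)$ is the second $X$-set axiom rather than axiom (3) of the MCQ (and similarly $y*(ab)=(y*a)*b$ in your diagonal step is the first $X$-set axiom); since these axioms are the exact analogues, nothing breaks, but it is worth saying.
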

 
\begin{proof}
The Leibniz rule 
\[ \partial(\sigma\tau)
=(\partial\sigma)\tau+(-1)^{| \sigma |}\sigma(\partial\tau) \]
is a restatement of the definition when $k=2$. In fact, the general definition follows from this by induction. 
Also $\partial(\sigma*a)=(\partial\sigma)*a$, and $\partial\circ\partial=0$ follows from these two facts.
\end{proof}

In the sequel, we will define a 
 degeneracy subcomplex 
 that is analogous (albeit more complicated) to the sub-complex of degeneracies for quandle homology. 
Before its definition, we give a description of simplicial decompositions of products of simplices for motivation. 
We identify an $n$-simplex $\Delta^n$ with the set 
 $\{ (x_1,x_2,\ldots, x_n)\in[0,1]^n: 0\le x_1 \le x_2 \le \cdots \le x_n \le 1\} $ 
 called the {\it right $n$-simplex}. 
 Then the $n$-cube 
$[0,1]^n$ can be decomposed into $n!$ sets each of which is congruent to this  right $n$-simplex that has $n$ edges of length $1$, and 
has $(n-k+1)$ edges of length $\sqrt{k}$ 
 for $k=1, \ldots, n$. 
 More specifically,
 for 
 $\vec{x} \in [0,1]^n$ consider the permutation $\sigma\in \Sigma_n$ such that $0\le x_{\sigma(1)} \le x_{\sigma(2)} \le \cdots \le x_{\sigma(n)} \le 1 $. If the coordinates of $\vec{x}$ are 
 all 
 distinct, then there is a unique such $\sigma$
 and an $n$-simplex $\Delta^n_\sigma$ congruent to the right $n$-simplex
 such that   $\vec{x}$ lies
 in the interior of $\Delta^n_\sigma$. 
 Otherwise 
 $\vec{x}$ lies in the boundary of more than one such simplex. 
 Now consider 
 the product of right simplices 
 $$\Delta^s \times \Delta^t =\{(\vec{x},\vec{y})\in [0,1]^{s+t}: 
0\le x_1 \le x_2 \le \cdots \le x_s \le 1 \ \& \ 0\le y_1 \le y_2 \le \cdots \le y_t \le 1 \} $$ 
where the notation  $(\vec{x}, \vec{y})$ represents $(x_1, \ldots, x_s, y_1, \ldots, y_t)$.
 This   can be decomposed as a union of 
   simplices    of the form given 
   above. 
      For $\vec{z}=(\vec{x},\vec{y})\in \Delta^s \times \Delta^t \subset [0,1]^n$ where $n=s+t$, there is an associated simplex $\Delta^n_\sigma$ that contains the point $(\vec{x},\vec{y})$. 
Suppose all coordinates of $\vec{z}$ are distinct, and let $\sigma \in \Sigma_n$ be a permutation such that 
$0 < z_{\sigma(1)} < \cdots < z_{\sigma(n)}$. 
Then the subset $\{ i_1, i_2, \ldots , i_s \} \subset \{1,2, \ldots, s+t \}$ with $i_1 < i_2 < \cdots < i_s$  
is determined from the positions of coordinates of $\vec{x}$, 
 so that $z_{i_k}=x_k$ for $k=1, \ldots, s$. 
Thus a given  subset $\{ i_1, i_2, \ldots , i_s \} \subset \{1,2, \ldots, s+t \}$ where $i_1 < i_2 < \cdots < i_s$  determines an $n$-simplex in the decomposition of  $\Delta^s \times \Delta^t$.
We proceed to the definition of the degeneracy subcomplex.

For an expression of the form
$\langle\boldsymbol{a}\rangle\langle\boldsymbol{b}\rangle$ in a chain in $P_n(X)_Y$, where
$\langle\boldsymbol{a}\rangle=\langle a_1,\ldots,a_s\rangle$ and
$\langle\boldsymbol{b}\rangle=\langle b_1,\ldots,b_t\rangle$ satisfy
$a_i,b_j\in G_{\lambda}$ for all $i=1,\ldots,s$ and $j=1,\ldots,t$, we define the notation
$\langle\langle\boldsymbol{a}\rangle
\langle\boldsymbol{b}\rangle\rangle_{i_1,\ldots,i_s}$ 
to represent
$(-1)^{\sum_{k=1}^s(i_k-k)}\langle c_1,\ldots,c_{s+t}\rangle$,
where $1\leq i_1<\cdots<i_k<\cdots<i_s\leq s+t$, and
\[ c_i=\begin{cases}
a_k*(b_1\cdots b_{i-k}) & \text{if $i=i_k$,} \\
b_{i-k} & \text{if $i_k<i<i_{k+1}$.}
\end{cases} \]
If $i=k$ in the first case, then we regard $(b_1\cdots b_{i-k})$ to be empty.
For example,
$\langle\langle a\rangle\langle b\rangle\rangle_{1}=\langle a,b\rangle$,
$\langle\langle a\rangle\langle b\rangle\rangle_{2}=-\langle b,a*b\rangle$,
and $\langle\langle a,b\rangle\langle c\rangle\rangle_{1,3}=-\langle a,c,b*c\rangle$.
We also define the notation
$\langle\langle\boldsymbol{a}\rangle\langle\boldsymbol{b}\rangle\rangle$ by
\[ \langle\langle\boldsymbol{a}\rangle\langle\boldsymbol{b}\rangle\rangle
:=\sum_{1\leq i_1<\cdots<i_s\leq s+t}
\langle\langle\boldsymbol{a}\rangle\langle\boldsymbol{b}\rangle\rangle_{i_1,\ldots,i_s}. \]

Define $D_n(X)_Y$ to be the subgroup of $P_n(X)_Y$ generated by the elements of the form
\[ \langle y\rangle\langle\boldsymbol{a}_1\rangle\cdots
\langle\boldsymbol{a}\rangle\langle\boldsymbol{b}\rangle
\cdots\langle\boldsymbol{a}_k\rangle
-\langle y\rangle\langle\boldsymbol{a}_1\rangle\cdots
\langle\langle\boldsymbol{a}\rangle\langle\boldsymbol{b}\rangle\rangle
\cdots\langle\boldsymbol{a}_k\rangle \]
where we implicitly assume the linearity of the notations
$\langle\langle\boldsymbol{a}\rangle
\langle\boldsymbol{b}\rangle\rangle_{i_1,\ldots,i_s}$
and $\langle\langle\boldsymbol{a}\rangle\langle\boldsymbol{b}\rangle\rangle$, that is,
\[ \langle y\rangle\langle\boldsymbol{a}_1\rangle\cdots
\langle\langle\boldsymbol{a}\rangle\langle\boldsymbol{b}\rangle\rangle
\cdots\langle\boldsymbol{a}_k\rangle
=\sum_{1\leq i_1<\cdots<i_{|\boldsymbol{a}|}\leq|\langle\boldsymbol{a}\rangle\langle\boldsymbol{b}\rangle|}
\langle y\rangle\langle\boldsymbol{a}_1\rangle\cdots
\langle\langle\boldsymbol{a}\rangle\langle\boldsymbol{b}\rangle
\rangle_{i_1,\ldots,i_{|\boldsymbol{a}|}}\cdots\langle\boldsymbol{a}_k\rangle. \]

The chain group $D_n(X)_Y$ is called the group of \textit{decomposition degeneracies}.
We will see that $D_*(X)_Y=(D_n(X)_Y,\partial_n)$ is a subcomplex of $P_*(X)_Y$ in Section~\ref{sec:decomp}.

We remark that the elements of the form
\[ \langle y\rangle\langle\boldsymbol{a}_1\rangle\cdots\langle a\rangle
\langle a\rangle\cdots\langle\boldsymbol{a}_k\rangle \]
belong to $D_n(X)_Y$.

For example, $D_2(X)_Y$ is generated by the elements of the form
\[ \langle y\rangle\langle a\rangle\langle b\rangle
-\langle y\rangle\langle a,b\rangle
+\langle y\rangle\langle b,a*b\rangle, \]
and $D_3(X)_Y$ is generated by the elements of the form
\begin{align*}
&\langle y\rangle\langle a\rangle\langle b\rangle\langle x\rangle
-\langle y\rangle\langle a,b\rangle\langle x\rangle
+\langle y\rangle\langle b,a*b\rangle\langle x\rangle, \\
&\langle y\rangle\langle x\rangle\langle b\rangle\langle c\rangle
-\langle y\rangle\langle x\rangle\langle b,c\rangle
+\langle y\rangle\langle x\rangle\langle c,b*c\rangle, \\
&\langle y\rangle\langle a,b\rangle\langle c\rangle
-\langle y\rangle\langle a,b,c\rangle
+\langle y\rangle\langle a,c,b*c\rangle
-\langle y\rangle\langle c,a*c,b*c\rangle, \\
&\langle y\rangle\langle a\rangle\langle b,c\rangle
-\langle y\rangle\langle a,b,c\rangle
+\langle y\rangle\langle b,a*b,c\rangle
-\langle y\rangle\langle b,c,a*(bc)\rangle
\end{align*}
for $a,b,c \in G_{\lambda},x\in X$.

\begin{definition}
The quotient complex of $P_*(X)_Y$ modulo decomposition degeneracies $D_*(X)_Y$ is denoted by 
 $C_*(X)_Y=(C_n(X)_Y,\partial_n)$, where $C_n(X)_Y=P_n(X)_Y/D_n(X)_Y$. 
For an abelian group $A$, we define the cochain complex $C^*(X;A)_Y=\operatorname{Hom}(C_*(X)_Y,A)$.
We denote by $H_n(X)_Y$ the $n$th homology group of $C_*(X)_Y$.
\end{definition}

%%%%%%%%%%%%%%%%%%%%%%%%%%%%%%%%%%%%%%%
%%%%%%%%%%%%%%%%%%%%%%%%%%%%%%%%%%%%%%%
\section{Algebraic aspects of the homology}
\label{sec:alg}

In this section we study algebraic aspects of the homology theory we defined.
Specifically, we characterize the first homology group, and show that a $2$-cocycle defines an extension.
For simplicity we consider the case $Y=\{y_0\}$ is a singleton, and we suppress the symbols $\langle y_0\rangle$ whenever possible.

Let $X$ be a multiple conjugation quandle, and $Y=\{y_0\}$ be a singleton.
Then $P_0(X)_Y$ is infinite cyclic generated by $\langle y_0\rangle$, and
$\partial_1(\langle y_0\rangle\langle a\rangle)=\langle y_0*a\rangle- \langle y_0\rangle$ 
for $a \in X$.
Hence $H_0(X)_Y=\Z$.
Next we characterize $H_1(X)_Y$.

For any $x\in X$, $\lambda\in\Lambda$, there is a unique element $\mu\in\Lambda$ such that $S_x(G_\lambda)=G_\mu$ and that $S_x:G_\lambda\to G_\mu$ is a group isomorphism.
Then we define an equivalence relation on $\Lambda$ by $\lambda\sim\mu$ if and only if there exist $x_1,\ldots,x_n\in X$, $\varepsilon_1,\ldots,\varepsilon_n\in\{\pm1\}$ such that 
$(S_{x_n}^{\varepsilon {}_n}\circ\cdots\circ S_{x_1}^{\varepsilon {}_1})(G_\lambda)=G_\mu$.
The equivalence classes form a partition of $\Lambda$.
We call it the \textit{orbit partition}.

\begin{proposition}
Let $Y=\{y_0\}$ be a singleton.
Let $X=\coprod_{\lambda \in \Lambda}G_{\lambda}$ be a multiple conjugation quandle with a finite orbit partition $\Lambda=\bigcup_{i=1}^n\Lambda_i$.
Then the first homology group $H_1(X)_Y$ is isomorphic to $\prod_{i=1}^nG_{\lambda_i}^\mathrm{ab}$, the product of abelianizations, where $\lambda_i\in\Lambda_i$.
\end{proposition}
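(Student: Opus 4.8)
The plan is to compute $H_1(X)_Y$ directly as $\ker\partial_1/\operatorname{im}\partial_2$ and then reorganize the resulting presentation according to the orbit partition. First I would dispose of the low-degree degeneracies and the role of $Y$. Since a block $\langle\boldsymbol a\rangle\langle\boldsymbol b\rangle$ needs total length at least $2$, we have $D_1(X)_Y=0$, so $C_1(X)_Y=P_1(X)_Y$ is the free abelian group on the symbols $\langle a\rangle$, $a\in X$ (suppressing $\langle y_0\rangle$). Because $Y=\{y_0\}$ is a singleton, $y_0*a=y_0$ gives $\partial_1\langle a\rangle=\langle y_0*a\rangle-\langle y_0\rangle=0$, hence $\ker\partial_1=C_1(X)_Y$ and $H_1(X)_Y=C_1(X)_Y/\operatorname{im}\partial_2$. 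Moreover $\partial_2$ carries $D_2(X)_Y$ into $D_1(X)_Y=0$, so $\operatorname{im}\partial_2$ may be read off from the generators of $P_2(X)_Y$ themselves.

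Next I read off the relations. Using the singleton simplification, the two boundary formulas in the Example give, for $a,b\in G_\lambda$, the identity $\partial_2\langle a,b\rangle=\langle a\rangle+\langle b\rangle-\langle ab\rangle$, and for arbitrary $a,b\in X$ the identity $\partial_2(\langle a\rangle\langle b\rangle)=\langle a\rangle-\langle a*b\rangle$. Thus in $H_1(X)_Y$ we impose (R1) $\langle ab\rangle=\langle a\rangle+\langle b\rangle$ whenever $a,b$ lie in a common component, and (R2) $\langle a\rangle=\langle a*b\rangle=\langle S_b(a)\rangle$ for all $a,b\in X$. Relation (R1) says precisely that $a\mapsto\langle a\rangle$ restricted to $G_\lambda$ is a homomorphism into the abelian group $H_1(X)_Y$, so it factors through $G_\lambda^{\mathrm{ab}}$; assembling these identifies $H_1(X)_Y$ with $\bigl(\bigoplus_{\lambda\in\Lambda}G_\lambda^{\mathrm{ab}}\bigr)$ modulo the relations coming from (R2).

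Now I organize by orbits. By axiom $(4')$ each $S_b\colon G_\lambda\to G_{\lambda*b}$ is a group isomorphism, so it induces an isomorphism $G_\lambda^{\mathrm{ab}}\to G_{\lambda*b}^{\mathrm{ab}}$, and (R2) simply glues these abelianizations together for components related by a single structure map. Within a single orbit $\Lambda_i$ every component is joined to the chosen representative $\lambda_i$ by a chain of maps $S_x^{\pm1}$, so choosing for each $\lambda\in\Lambda_i$ one such chain $\psi_\lambda\colon G_\lambda\to G_{\lambda_i}$ produces a surjection $\bigoplus_{i=1}^n G_{\lambda_i}^{\mathrm{ab}}\to H_1(X)_Y$ (a finite orbit partition makes the direct sum and the product coincide). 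To obtain injectivity I would build the inverse on generators by sending $\langle a\rangle$, for $a\in G_\lambda$ with $\lambda\in\Lambda_i$, to the class of $\psi_\lambda(a)$ in the $i$-th factor $G_{\lambda_i}^{\mathrm{ab}}$; relation (R1) is respected because each $\psi_\lambda$ is a homomorphism.

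The main obstacle is the well-definedness of this inverse against (R2): if $\lambda\in\Lambda_i$ and $b\in X$ with $\mu=\lambda*b\in\Lambda_i$, I must check that $\psi_\mu(S_b(a))$ and $\psi_\lambda(a)$ have the same image in $G_{\lambda_i}^{\mathrm{ab}}$. Writing $\theta=\psi_\lambda^{-1}\circ\psi_\mu\circ S_b\colon G_\lambda\to G_\lambda$, this is equivalent to showing that every such holonomy automorphism — a composite $S_{x_n}^{\varepsilon_n}\circ\cdots\circ S_{x_1}^{\varepsilon_1}$ carrying a component back to itself — acts as the identity on the abelianization. I expect this to be the crux of the entire argument. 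The strategy I would pursue is to prove that these holonomy automorphisms are inner automorphisms of $G_\lambda$, hence trivial on $G_\lambda^{\mathrm{ab}}$: the within-component instances $S_b$ with $b\in G_\lambda$ are literally conjugation by axiom (1), and the intertwining identity $S_z\circ S_y=S_{y*z}\circ S_z$ extracted from axiom (3) should let me propagate the conjugacy data along a closed chain and collapse it to conjugation by a single element of $G_\lambda$. Establishing this triviality on the abelianization is where the full force of the MCQ compatibility axioms must be used, and it is the step I would expect to demand the most care.
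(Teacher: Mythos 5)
Your reduction of $H_1(X)_Y$ to the presentation with generators $\langle a\rangle$ ($a\in X$) and relations $\langle ab\rangle=\langle a\rangle+\langle b\rangle$, $\langle a*b\rangle=\langle a\rangle$ is exactly the computation in the paper's proof, and you have correctly isolated the step the paper disposes of in a single sentence (``identified by $S_b$ \dots\ the result follows''): one must know that gluing the $G_\lambda^{\mathrm{ab}}$ along the maps $S_x^{\pm1}$ is consistent, i.e.\ that every holonomy automorphism of a component acts trivially on its abelianization. The gap is that the key lemma you defer to --- that such holonomies are inner automorphisms of $G_\lambda$ --- is false for a general MCQ. Axiom (1) makes $S_b|_{G_\lambda}$ inner only when $b\in G_\lambda$ itself; for $b$ in another component, $S_b|_{G_\lambda}$ is just some automorphism compatible with the axioms, and the identity $S_z\circ S_y=S_{y*z}\circ S_z$ only conjugates the family $\{S_x\}$ inside $\operatorname{Aut}_{\mathsf{Qnd}}X$ --- it never expresses a cross-component $S_b$ as conjugation by an element of $G_\lambda$, so there is no conjugacy data to propagate around a closed chain.

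Concretely, take $X=G_1\sqcup G_2$ with $G_1=\Z=\langle t\rangle$ and $G_2=\Z/2\Z=\{e,w\}$, where every element of $G_1$, and $e$, acts on $X$ as the identity, while $S_w$ is the identity on $G_2$ and inversion $t^n\mapsto t^{-n}$ on $G_1$. This satisfies all four MCQ axioms: both groups are abelian, so axiom (1) forces exactly the trivial within-component action just defined; one checks $S_{y*z}=S_y$ for all $y,z$ and that the maps $S_x$ commute, giving (2) and (3); and inversion is an automorphism of $G_1$, giving (4). Here $S_w(G_1)=G_1$, so $S_w|_{G_1}$ is a one-step holonomy, yet it is not inner and acts by $-1$ on $G_1^{\mathrm{ab}}=\Z$, so your inverse map is not well defined. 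Worse, the obstruction is not an artifact of your particular route: the relations force $\langle t\rangle=\langle t*w\rangle=\langle t^{-1}\rangle=-\langle t\rangle$, hence $2\langle t\rangle=0$ and $H_1(X)_Y\cong\Z/2\Z\oplus\Z/2\Z$, whereas the two orbit classes $\{1\}$, $\{2\}$ would predict $\Z\times\Z/2\Z$. Thus triviality of holonomy on abelianizations is a genuine additional hypothesis, which the paper's one-line identification step also uses silently; it does hold for MCQs arising from $G$-families as in Example~\ref{ex:mcq from G-family}, where every $S_{(b,h)}$ is conjugation by $h$ in the $G$-coordinate --- precisely the situation your ``propagate the conjugacy along the chain'' heuristic captures --- but it cannot be derived from the MCQ axioms alone, so neither your argument nor the paper's closes at the stated level of generality.
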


\begin{proof}
We suppress the notation $\langle y_0\rangle$.
The $2$-dimensional chain group $P_2(X)_Y$ is generated by
\[ \{\langle a\rangle\langle b\rangle,\,\langle a,b\rangle\,|\,a,b\in X\}. \]
The degeneracy group $D_1(X)_Y$ is $0$.
One computes
\begin{align*}
&\partial_2(\langle a\rangle\langle b\rangle)
=-\langle a*b\rangle+\langle a\rangle,
&&\partial_2(\langle a,b\rangle)
=\langle b\rangle-\langle ab\rangle+\langle a\rangle.
\end{align*}
By definitions, $H_1(X)_Y$ is the free abelian group generated by
$\{\langle a\rangle\,|\,a\in X\}$ subject to the relations
$\langle a*b\rangle=\langle a\rangle$ and
$\langle ab\rangle=\langle a\rangle+\langle b\rangle$ over all $a,b\in X$.
The elements of the form $\langle a\rangle$ from the groups
$\{G_\lambda\,|\,\lambda\in\Lambda_i\}$ are identified by $S_b$ from the first relation.
The last relation gives rise to $G_\lambda^\mathrm{ab}$ for each subgroup generated by elements of $G_\lambda$.
Then the result follows.
\end{proof}

\begin{proposition}
Let $X=\coprod_{\lambda\in\Lambda}G_\lambda$ be a multiple conjugation quandle, let $Y=\{y_0\}$ be a singleton, and $A$ an abelian group.
A map $\phi:P_2(X)_Y\to A$ is a $2$-cocycle of $C^*(X)_Y$ if and only if
$X\times A=\coprod_{\lambda\in\Lambda} (G_\lambda\times A) $ with  
\begin{align*}
&(a,s)*(b,t):=(a*b,s+\phi(\langle a\rangle\langle b\rangle))
\text{ for $(a,s),(b,t)\in X\times A$}, \\
&(a,s)(b,t):=(ab,s+t+\phi(\langle a,b\rangle))
\text{ for $(a,s),(b,t)\in G_\lambda\times A$}
\end{align*}
is a multiple conjugation quandle, where
$\phi(\langle y_0\rangle\langle a\rangle\langle b\rangle)$ and
$\phi(\langle y_0\rangle\langle a,b\rangle)$ are respectively denoted by
$\phi(\langle a\rangle\langle b\rangle)$ and
$\phi(\langle a,b\rangle)$ for short.
Furthermore, $(e_\lambda,-\phi(\langle e_\lambda,e_\lambda\rangle))$ is the identity of the group $G_\lambda\times A$, and
$(a^{-1},-s-\phi(\langle a,a^{-1}\rangle)
-\phi(\langle e_\lambda,e_\lambda\rangle))$
is the inverse of $(a,s)\in G_\lambda\times A$.
\end{proposition}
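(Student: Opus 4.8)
The plan is to match, one at a time, the five structural requirements that make $X\times A=\coprod_\lambda(G_\lambda\times A)$ a multiple conjugation quandle with the five ingredients of a $2$-cocycle of $C^*(X)_Y$: the vanishing of $\phi$ on the degeneracy subgroup $D_2(X)_Y$, together with the vanishing of $\phi\circ\partial_3$ on each of the four generator types $\langle a\rangle\langle b\rangle\langle c\rangle$, $\langle a\rangle\langle b,c\rangle$, $\langle a,b\rangle\langle c\rangle$, $\langle a,b,c\rangle$ of $P_3(X)_Y$. Since the first coordinate of each operation on $X\times A$ is simply the corresponding operation in the MCQ $X$, every axiom holds automatically in the first coordinate, and all the content lies in the $A$-coordinate, where one compares the accumulated $\phi$-terms on the two sides of each axiom. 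The proof is thus a bookkeeping of second coordinates, carried out so that each equality can be read in either direction to give both implications of the equivalence.

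First I would establish the four direct correspondences. Expanding group associativity $((a,s)(b,t))(c,u)=(a,s)((b,t)(c,u))$ in $G_\lambda\times A$ and cancelling the common first coordinate and the common $s+t+u$ leaves exactly the inhomogeneous group $2$-cocycle identity $\phi(\langle a,b\rangle)+\phi(\langle ab,c\rangle)=\phi(\langle b,c\rangle)+\phi(\langle a,bc\rangle)$, which is $\phi\circ\partial_3(\langle a,b,c\rangle)=0$. In the same mechanical way, expanding $x*(ab)=(x*a)*b$ yields $\phi\circ\partial_3(\langle a\rangle\langle b,c\rangle)=0$; expanding self-distributivity, axiom (3), yields $\phi\circ\partial_3(\langle a\rangle\langle b\rangle\langle c\rangle)=0$; and expanding axiom (4), $(ab)*x=(a*x)(b*x)$, yields $\phi\circ\partial_3(\langle a,b\rangle\langle c\rangle)=0$, the containment of $(a*x)(b*x)$ in a single component being automatic because $X$ is already an MCQ.

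Next I would dispose of the unit, the relation $x*e_\lambda=x$, and the inverse, all consequences of the associativity identity above. Specializing that identity at $a=e_\lambda$ and at $c=e_\lambda$ shows that $\phi(\langle e_\lambda,a\rangle)$ and $\phi(\langle a,e_\lambda\rangle)$ are independent of $a$, hence both equal $\phi(\langle e_\lambda,e_\lambda\rangle)$; this is precisely what is needed for $(e_\lambda,-\phi(\langle e_\lambda,e_\lambda\rangle))$ to be a two-sided identity of $G_\lambda\times A$. Specializing $\partial_3(\langle a\rangle\langle b,c\rangle)=0$ at $b=c=e_\lambda$ gives $\phi(\langle x\rangle\langle e_\lambda\rangle)=0$, which is $x*e_\lambda=x$ in the second coordinate. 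For inverses, specializing the associativity identity at $(b,c)=(a^{-1},a)$ and invoking the two unit facts collapses it to $\phi(\langle a,a^{-1}\rangle)=\phi(\langle a^{-1},a\rangle)$; a direct product computation then confirms that the stated element is a two-sided inverse of $(a,s)$.

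The main obstacle is axiom (1), the demand that $*$ restrict to conjugation on each $G_\lambda\times A$. Here the generator of $D_2$, namely $\langle a\rangle\langle b\rangle-\langle a,b\rangle+\langle b,a*b\rangle$, forces $\phi(\langle a\rangle\langle b\rangle)=\phi(\langle a,b\rangle)-\phi(\langle b,a*b\rangle)$, and this does not literally coincide with the second coordinate of the conjugate $(b,t)^{-1}(a,s)(b,t)$: computing that conjugate with the inverse and product formulas produces the four terms $\phi(\langle b^{-1},a\rangle)$, $\phi(\langle b^{-1}a,b\rangle)$, $-\phi(\langle b,b^{-1}\rangle)$, and the unit correction $-\phi(\langle e_\lambda,e_\lambda\rangle)$, rather than the two terms of the degeneracy relation. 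Reconciling the two expressions is the one step requiring genuine manipulation: I would apply the group $2$-cocycle identity once at $(b^{-1},a,b)$ to rewrite $\phi(\langle b^{-1},a\rangle)+\phi(\langle b^{-1}a,b\rangle)$ as $\phi(\langle a,b\rangle)+\phi(\langle b^{-1},ab\rangle)$, and once at $(b,b^{-1},ab)$, using $\phi(\langle e_\lambda,ab\rangle)=\phi(\langle e_\lambda,e_\lambda\rangle)$, to rewrite $\phi(\langle b^{-1},ab\rangle)$ in terms of $\phi(\langle b,b^{-1}\rangle)$, $\phi(\langle e_\lambda,e_\lambda\rangle)$, and $\phi(\langle b,a*b\rangle)$. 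After these two substitutions the conjugate's second coordinate reduces exactly to $s+\phi(\langle a,b\rangle)-\phi(\langle b,a*b\rangle)=s+\phi(\langle a\rangle\langle b\rangle)$, so axiom (1) is equivalent to the degeneracy relation in the presence of the associativity cocycle identity. With all five correspondences established, both implications of the stated equivalence follow, and the displayed identity and inverse formulas are exactly those verified along the way.
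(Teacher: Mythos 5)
Your proposal is correct and follows the same overall architecture as the paper's proof: the same five-way matching of the degeneracy relation on $D_2(X)_Y$ and the four types of $\partial_3$-generators ($\langle a,b,c\rangle$, $\langle a\rangle\langle b,c\rangle$, $\langle a\rangle\langle b\rangle\langle c\rangle$, $\langle a,b\rangle\langle c\rangle$) with group associativity and the MCQ axioms, and the same extraction of the unit and inverse data from the associativity identity, namely $\phi(\langle a,e_\lambda\rangle)=\phi(\langle e_\lambda,c\rangle)$, $\phi(\langle a,a^{-1}\rangle)=\phi(\langle a^{-1},a\rangle)$, and $\phi(\langle x\rangle\langle e_\lambda\rangle)=0$ from the $\langle a\rangle\langle b,c\rangle$ condition. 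The one place you genuinely diverge is axiom (1). The paper never computes $(b,t)^{-1}(a,s)(b,t)$: it rewrites $a*b=b^{-1}ab$ in the inverse-free form $b(a*b)=ab$, whereupon the degeneracy relation $\phi(\langle a\rangle\langle b\rangle)+\phi(\langle b,a*b\rangle)=\phi(\langle a,b\rangle)$ drops out of a one-line comparison of second coordinates. Your direct computation of the conjugate, using the explicit inverse formula and two applications of the associativity cocycle identity at $(b^{-1},a,b)$ and $(b,b^{-1},ab)$ together with the unit facts, is correct --- the second coordinate does reduce to $s+\phi(\langle a,b\rangle)-\phi(\langle b,a*b\rangle)$ as you claim --- and it makes explicit that the degeneracy/axiom-(1) equivalence is established only modulo the group structure on $G_\lambda\times A$; but that dependence is equally present, merely hidden, in the paper's version, since passing between $a*b=b^{-1}ab$ and $b(a*b)=ab$ already uses invertibility, so neither treatment is more conditional than the other. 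In short, the paper's reformulation buys brevity and avoids the inverse formula altogether; your route costs two extra cocycle manipulations but verifies the same equivalence and, as a by-product, exercises the stated inverse formula, confirming it in action.
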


\begin{proof}
We show correspondences between cocycle conditions and MCQ conditions for the extension.

\noindent
(1)  
{\it The correspondence between the cocycle condition $\phi(\partial_3(\langle a,b,c\rangle))=0$
and the associativity of a group.}

For $(a,s),(b,t),(c,u)\in G_\lambda\times A$,
$\phi(\langle a,b\rangle)+\phi(\langle ab,c\rangle)
=\phi(\langle b,c\rangle)+\phi(\langle a,bc\rangle)$ if and only if
$((a,s)(b,t))(c,u)=(a,s)((b,t)(c,u))$, since
\begin{align*}
&((a,s)(b,t))(c,u)
=(abc,s+t+u+\phi(\langle a,b\rangle)+\phi(\langle ab,c\rangle)), \\
&(a,s)((b,t)(c,u))
=(abc,s+t+u+\phi(\langle b,c\rangle)+\phi(\langle a,bc\rangle)).
\end{align*}
We note that
$\phi(\langle a,b\rangle)+\phi(\langle ab,c\rangle)
=\phi(\langle b,c\rangle)+\phi(\langle a,bc\rangle)$, or equivalently
$((a,s)(b,t))(c,u)=(a,s)((b,t)(c,u))$ implies that
$\phi(\langle a,e_\lambda\rangle)=\phi(\langle e_\lambda,c\rangle)$,
$\phi(\langle b^{-1},b\rangle)=\phi(\langle b,b^{-1}\rangle)$.
These equalities respectively imply
\[ (a,s)
=(a,s)(e_\lambda,-\phi(\langle e_\lambda,e_\lambda\rangle))
=(e_\lambda,-\phi(\langle e_\lambda,e_\lambda\rangle))(a,s) \]
and
\begin{align*}
(e_\lambda,-\phi(\langle e_\lambda,e_\lambda\rangle))
&=(a,s)(a^{-1},-s-\phi(\langle a,a^{-1}\rangle)
-\phi(\langle e_\lambda,e_\lambda\rangle)) \\
&=(a^{-1},-s-\phi(\langle a,a^{-1}\rangle)
-\phi(\langle e_\lambda,e_\lambda\rangle))(a,s).
\end{align*}
It follows that $(e_\lambda,-\phi(\langle e_\lambda,e_\lambda\rangle))$ is the identity of the group $G_\lambda\times A$, and that
$(a^{-1},-s-\phi(\langle a,a^{-1}\rangle)
-\phi(\langle e_\lambda,e_\lambda\rangle))$ is the inverse of $(a,s)\in G_\lambda\times A$.

\noindent
(2)  
{\it The correspondence between the degeneracy of $\phi$ on $D_2(X)_Y$ and the first axiom of MCQ.}

For $(a,s),(b,t)\in G_\lambda\times A$,
$\phi(\langle a\rangle\langle b\rangle)+\phi(\langle b,a*b\rangle)
=\phi(\langle a,b\rangle)$ if and only if
$(b,t)((a,s)*(b,t))=(a,s)(b,t)$, since
\begin{align*}
&(b,t)((a,s)*(b,t))
=(b(a*b),s+t+\phi(\langle a\rangle\langle b\rangle)
+\phi(\langle b,a*b\rangle)), \\
&(a,s)(b,t)=(ab,s+t+\phi(\langle a,b\rangle)).
\end{align*}

\noindent
(3)  
{\it The correspondence between the cocycle condition $\phi(\partial_3(\langle x\rangle\langle a,b\rangle))=0$
and the second axiom of MCQ.}

For $(x,r)\in X\times A$, $(a,s),(b,t)\in G_\lambda\times A$,
$\phi(\langle x\rangle\langle ab\rangle)
=\phi(\langle x\rangle\langle a\rangle)
+\phi(\langle x*a\rangle\langle b\rangle)$ if and only if
$(x,r)*((a,s)(b,t))=((x,r)*(a,s))*(b,t)$, since
\begin{align*}
&(x,r)*((a,s)(b,t))
=(x*(ab),r+\phi(\langle x\rangle\langle ab\rangle)), \\
&((x,r)*(a,s))*(b,t)
=((x*a)*b,r+\phi(\langle x\rangle\langle a\rangle)
+\phi(\langle x*a\rangle\langle b\rangle)).
\end{align*}
We note that
$\phi(\langle x\rangle\langle ab\rangle)
=\phi(\langle x\rangle\langle a\rangle)
+\phi(\langle x*a\rangle\langle b\rangle)$, or equivalently
$(x,r)*((a,s)(b,t))=((x,r)*(a,s))*(b,t)$ implies that
$\phi(\langle x\rangle\langle e_\lambda\rangle)=0$.
Then we have
\[ (a,s)*(e_\lambda,-\phi(\langle e_\lambda,e_\lambda\rangle))=(a,s). \]

\noindent
(4) %We see the 
{\it The correspondence between the cocycle condition $\phi(\partial_3(\langle a\rangle\langle b\rangle\langle c\rangle))=0$
and the third axiom of MCQ.}

For $(a,s),(b,t),(c,u)\in X\times A$,
$\phi(\langle a\rangle\langle b\rangle)
+\phi(\langle a*b\rangle\langle c\rangle)
=\phi(\langle a\rangle\langle c\rangle)
+\phi(\langle a*c\rangle\langle b*c\rangle)$ if and only if
$((a,s)*(b,t))*(c,u)=((a,s)*(c,u))*((b,t)*(c,u))$, since
\begin{align*}
&((a,s)*(b,t))*(c,u)
=((a*b)*c,s+\phi(\langle a\rangle\langle b\rangle)
+\phi(\langle a*b\rangle\langle c\rangle)), \\
&((a,s)*(c,u))*((b,t)*(c,u))
=((a*c)*(b*c),s+\phi(\langle a\rangle\langle c\rangle)
+\phi(\langle a*c\rangle\langle b*c\rangle)).
\end{align*}

\noindent
(5) 
{\it  The correspondence between the cocycle condition $\phi(\partial_3(\langle a,b\rangle\langle x\rangle))=0$
and the last axiom of MCQ.}

For $(x,r)\in X\times A$, $(a,s),(b,t)\in G_\lambda\times A$,
$\phi(\langle a,b\rangle)
+\phi(\langle ab\rangle\langle x\rangle)
=\phi(\langle a\rangle\langle x\rangle)
+\phi(\langle b\rangle\langle x\rangle)
+\phi(\langle a*x,b*x\rangle)$ if and only if
$((a,s)(b,t))*(x,r)=((a,s)*(x,r))((b,t)*(x,r))$, since
\begin{align*}
&((a,s)(b,t))*(x,r)=((ab)*x,s+t+\phi(\langle a,b\rangle)
+\phi(\langle ab\rangle\langle x\rangle)), \\
&((a,s)*(x,r))((b,t)*(x,r)) \\
&\hspace{10mm}=((a*x)(b*x),s+t+\phi(\langle a\rangle\langle x\rangle)
+\phi(\langle b\rangle\langle x\rangle)
+\phi(\langle a*x,b*x\rangle)).
\end{align*}

Therefore $\phi$ is a $2$-cocycle if and only if $X\times A$ is a multiple conjugation quandle.
\end{proof}

%%%%%%%%%%%%%%%%%%%%%%%%%%%%%%%%%%%%%%%
%%%%%%%%%%%%%%%%%%%%%%%%%%%%%%%%%%%%%%%
\section{Quandle cocycle invariants for handlebody-links}
\label{sec:coloring}

The definition of a multiple conjugation quandle is motivated from handlebody-links and their colorings~\cite{Ishii_MCQ}. 
A \textit{handlebody-link} is a disjoint union of handlebodies embedded in the $3$-sphere $S^3$.
A \textit{handlebody-knot} is a one component handlebody-link.
Two handlebody-links are \textit{equivalent} if there is an orientation-preserving self-homeomorphism of $S^3$ which sends one to the other.
A \textit{diagram} of a handlebody-link is a diagram of a spatial trivalent graph whose regular neighborhood is the handlebody-link, where a spatial trivalent graph is a finite trivalent graph embedded in $S^3$.
In this paper, a trivalent graph may contain circle components.
Two handlebody-links are equivalent if and only if their diagrams are related by a finite sequence of R1--R6 moves depicted in Figure~\ref{fig:Rmove}~\cite{Ishii08}.

\begin{figure}
\begin{center}
\includegraphics{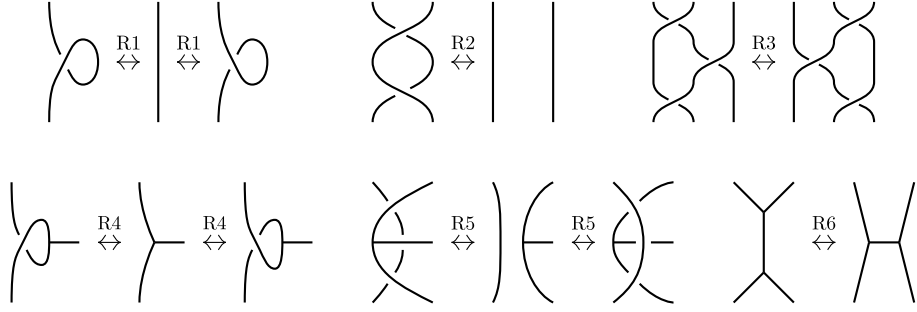}
\end{center}
\caption{Reidemeister moves for handlebody-links}
\label{fig:Rmove}
\end{figure}

An \textit{$S^1$-orientation} of a handlebody-link is an orientation of all genus $1$ components of the handlebody-link, where an orientation of a solid torus is an orientation of its core $S^1$.
Two $S^1$-oriented handlebody-links are \textit{equivalent} if there is an orientation-preserving self-homeomorphism of $S^3$ which sends one to the other preserving the $S^1$-orientation.
A \textit{Y-orientation} of a spatial trivalent graph is an orientation of the graph without sources and sinks with respect to the orientation (see Figure~\ref{fig:Y-orientation}).
We note that the term Y-orientation is a symbolic convention, and has no relation to  an $X$-set $Y$.
A \textit{diagram} of an $S^1$-oriented handlebody-link is a diagram of a Y-oriented spatial trivalent graph whose regular neighborhood is the $S^1$-oriented handlebody-link where the $S^1$-orientation is induced from the Y-orientation by forgetting the orientations except on circle components of the Y-oriented spatial trivalent graph.
\textit{Y-oriented R1--R6 moves} are R1--R6 moves between two diagrams with Y-orientations which are identical except in the disk where the move applied.
Two $S^1$-oriented handlebody-links are equivalent if and only if their diagrams are related by a finite sequence of Y-oriented R1--R6 moves~\cite{Ishii16}.
Note that in Figure~\ref{fig:Rmove} (R6), if all end points are oriented downward, then either choice of the two possible orientations of the middle edge makes the diagram Y-oriented locally.
Thus reversing an orientation of this edge can be regarded as applying Y-oriented R6 moves twice.
This is the case whenever both orientations of an edge give Y-orientations.

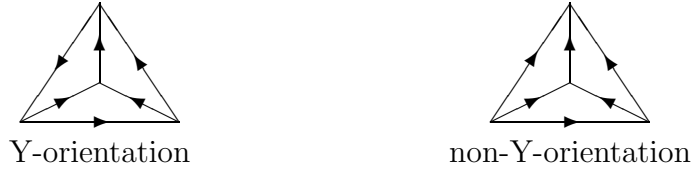
\begin{figure}
\mbox{}\hfill
\begin{picture}(60,60)
\put(0,15){\line(1,0){60}}
\put(0,15){\line(2,1){30}}
\put(0,15){\line(2,3){30}}
\put(60,15){\line(-2,1){30}}
\put(60,15){\line(-2,3){30}}
\put(30,30){\line(0,1){30}}
\thicklines
\put(34,15){\vector(1,0){0}}
\put(19,24.5){\vector(2,1){0}}
\put(13,34.5){\vector(-2,-3){0}}
\put(41,24.5){\vector(-2,1){0}}
\put(43,40.5){\vector(-2,3){0}}
\put(30,48){\vector(0,1){0}}
\put(30,4){\makebox(0,0){Y-orientation}}
\end{picture}
\hfill
\begin{picture}(60,60)
\put(0,15){\line(1,0){60}}
\put(0,15){\line(2,1){30}}
\put(0,15){\line(2,3){30}}
\put(60,15){\line(-2,1){30}}
\put(60,15){\line(-2,3){30}}
\put(30,30){\line(0,1){30}}
\thicklines
\put(34,15){\vector(1,0){0}}
\put(19,24.5){\vector(2,1){0}}
\put(18,42){\vector(2,3){0}}
\put(41,24.5){\vector(-2,1){0}}
\put(43,40.5){\vector(-2,3){0}}
\put(30,48){\vector(0,1){0}}
\put(30,4){\makebox(0,0){non-Y-orientation}}
\end{picture}
\hfill\mbox{}
\caption{Y-orientation}
\label{fig:Y-orientation}
\end{figure}

Let $X=\coprod_{\lambda\in\Lambda}G_\lambda$ be a multiple conjugation quandle, and let $Y$ be an $X$-set.
Let $D$ be a diagram of an $S^1$-oriented handlebody-link $H$.
We denote by $\mathcal{A}(D)$ the set of arcs of $D$, where an arc is a piece of a curve each of whose endpoints is an undercrossing or a vertex.
We denote by $\mathcal{R}(D)$ the set of complementary regions of $D$.
In this paper, an orientation of an arc is represented by the normal orientation obtained by rotating the usual orientation counterclockwise by $\pi/2$ on the diagram.
An \textit{$X$-coloring} $C$ of a diagram $D$ is an assignment of an element of $X$ to each arc $\alpha\in\mathcal{A}(D)$ satisfying the conditions depicted in the left three diagrams in Figure~\ref{fig:coloring} at each crossing and each vertex of $D$.
An $X_Y$-coloring $C$ of $D$ is an extension of an $X$-coloring of $D$ which assigns an element of $Y$ to each region $R\in\mathcal{R}(D)$ satisfying the condition depicted in the rightmost diagram in Figure~\ref{fig:coloring} at each arc.
We denote by $\operatorname{Col}_X(D)$ (resp.~$\operatorname{Col}_X(D)_Y$) the set of $X$-colorings (resp.~$X_Y$-colorings) of $D$.
Then we have the following proposition.

\begin{figure}
\mbox{}\hfill
\begin{picture}(70,80)
 \put(10,50){\line(1,0){17}}
 \put(50,50){\line(-1,0){17}}
 \put(30,30){\line(0,1){40}}
 \put(31,65){\makebox(0,0){$\rightarrow$}}
 \put(30,73){\makebox(0,0)[b]{$b$}}
 \put(7,50){\makebox(0,0)[r]{$a$}}
 \put(53,50){\makebox(0,0)[l]{$a*b$}}
\thicklines
 \put(30,30){\vector(0,-1){0}}
\end{picture}
\hfill
\begin{picture}(60,80)
 \put(10,70){\line(1,-1){20}}
 \put(50,70){\line(-1,-1){20}}
 \put(30,30){\line(0,1){20}}
 \put(15,65){\makebox(0,0){$\nearrow$}}
 \put(45,65){\makebox(0,0){$\searrow$}}
 \put(31,35){\makebox(0,0){$\rightarrow$}}
 \put(8,72){\makebox(0,0)[br]{$a$}}
 \put(52,72){\makebox(0,0)[bl]{$b$}}
 \put(30,27){\makebox(0,0)[t]{$ab$}}
 \put(30,5){\makebox(0,0){$a,b\in G_\lambda$}}
\end{picture}
\hfill
\begin{picture}(60,80)
 \put(10,30){\line(1,1){20}}
 \put(50,30){\line(-1,1){20}}
 \put(30,70){\line(0,-1){20}}
 \put(15,35){\makebox(0,0){$\searrow$}}
 \put(45,35){\makebox(0,0){$\nearrow$}}
 \put(31,65){\makebox(0,0){$\rightarrow$}}
 \put(8,28){\makebox(0,0)[tr]{$a$}}
 \put(52,28){\makebox(0,0)[tl]{$b$}}
 \put(30,73){\makebox(0,0)[b]{$ab$}}
 \put(30,5){\makebox(0,0){$a,b\in G_\lambda$}}
\end{picture}
\hfill
\begin{picture}(70,80)
 \put(30,30){\line(0,1){40}}
 \put(31,65){\makebox(0,0){$\rightarrow$}}
 \put(30,73){\makebox(0,0)[b]{$a$}}
 \put(10,45){\framebox(10,10){$x$}}
 \put(40,45){\framebox(25,10){$x*a$}}
\thicklines
 \put(30,30){\vector(0,-1){0}}
\end{picture}
\hfill\mbox{}
\caption{Rules of a coloring}
\label{fig:coloring}
\end{figure}
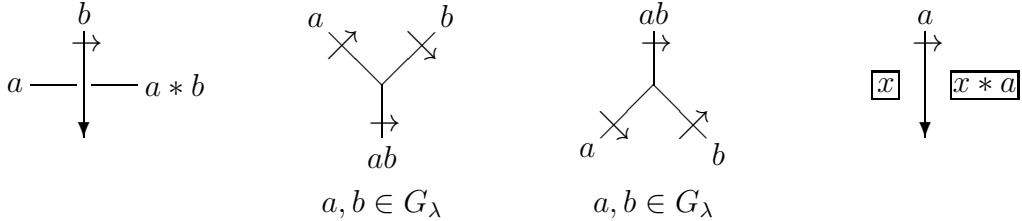

\begin{proposition}[\cite{Ishii16}]  \label{prop:coloring}
Let $X=\coprod_{\lambda\in\Lambda}G_\lambda$ be a multiple conjugation quandle, and let $Y$ be an $X$-set.
Let $D$ be a diagram of an $S^1$-oriented handlebody-link $H$.
Let $D'$ be a diagram obtained by applying one of Y-oriented R1--R6 moves to the diagram $D$ once.
For an $X$-coloring (resp.~$X_Y$-coloring) $C$ of $D$, there is a unique $X$-coloring (resp.~$X_Y$-coloring) $C'$ of $D'$ which coincides with $C$ except near a point where the move applied.
\end{proposition}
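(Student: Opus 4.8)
The plan is to reduce the statement to a purely local verification at the disk where the move is performed, and then to check each of the six Reidemeister moves R1--R6 (in its Y-oriented form) one at a time. Since $C'$ is required to agree with $C$ outside a small disk, the colors assigned to the arcs and regions crossing the boundary of that disk are already fixed by $C$; the content of the proposition is that these boundary data extend, with both existence and uniqueness, to a consistent coloring of the finitely many arcs and regions created inside the disk. The guiding principle is that each coloring rule of Figure~\ref{fig:coloring} is \emph{locally invertible}: across an overarc colored $b$, the rule $a\mapsto a*b$ is a bijection with inverse $a\mapsto a*b^{-1}$ by quandle axiom (2) (realized by $S_{b^{-1}}$), and at a trivalent vertex the merged color $ab$ determines and is determined by the pair $(a,b)$ together with the group structure of $G_\lambda$. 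Propagating these rules around the altered diagram therefore pins down each interior color uniquely, which handles uniqueness uniformly; existence is what must be checked move by move.

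For the $X$-coloring statement, the heart of the matter is that each move corresponds to precisely one MCQ axiom. The classical moves reduce to the quandle structure: R1 uses idempotency $a*a=a$ (automatic since $*$ is conjugation on each component), R2 uses the invertibility of $S_b$, and R3 uses self-distributivity, axiom (3), $(a*b)*c=(a*c)*(b*c)$, so that reading the output arcs along either side of the triple point gives the same colors. The moves that involve trivalent vertices invoke the compatibility axioms: pulling a strand colored $x$ across a vertex at which $a,b\in G_\lambda$ merge produces output arcs colored $a*x$ and $b*x$, and the equality of the merged colors on the two sides of the move is exactly axiom (4), $(ab)*x=(a*x)(b*x)$; similarly, sliding a vertex past an overarc, or composing two vertices, uses axiom (2), $x*(ab)=(x*a)*b$, together with axiom (1) to relate conjugation and the partial product. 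For each such move I would write down the colors forced on the interior arcs by the boundary data and verify, using the cited axiom, that the two coloring constraints meeting in the disk agree.

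The $X_Y$-coloring (region) part is handled on top of the $X$-coloring: having fixed the arc colors, one propagates region colors by the rule $x\mapsto x*a$ across an arc colored $a$ (rightmost diagram of Figure~\ref{fig:coloring}). Consistency of the region colors created inside the disk follows from the two $X$-set axioms, $y*(ab)=(y*a)*b$ and $(y*a)*b=(y*b)*(a*b)$, which are the region-level analogues of the MCQ axioms used for the arcs; at each crossing and each vertex the two ways of reading a region color around the move agree by precisely these identities, and uniqueness again follows from the invertibility of $y\mapsto y*a$.

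The main obstacle, and the place where the MCQ structure is genuinely needed rather than just a quandle, is the family of vertex moves, R6 in particular. Here the subtlety is not only numerical agreement of colors but \emph{well-definedness}: a trivalent vertex can be colored only when its two merging arcs lie in a common component $G_\lambda$, so after a strand is pushed across a vertex one must know that the resulting colors $a*x$ and $b*x$ again lie in a single component before their product $(a*x)(b*x)$ is even meaningful. This is supplied by the reformulated axiom $(4')$: $S_x(G_\lambda)=G_\mu$ and $S_x\colon G_\lambda\to G_\mu$ is a group isomorphism, so $a*x,b*x\in G_\mu$ and the vertex on the far side is legitimately colorable, with $(a*x)(b*x)=(ab)*x$ by the isomorphism property. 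For R6, where the middle edge may carry either of two Y-orientations, I would additionally use that reversing that orientation amounts to replacing a color by its group inverse and is realized by two R6 moves, as noted after the description of the Y-oriented moves, so that the two orientations yield compatible colorings via the inverse map $S_{x^{-1}}$. Verifying this component-tracking at every vertex configuration is the most delicate bookkeeping in the argument.
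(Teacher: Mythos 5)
The paper offers no proof of this proposition to compare against: it is imported verbatim from \cite{Ishii16}, where the proof is the move-by-move verification you outline. Judged on its own merits, your proposal is the correct and standard argument, and it identifies the right structural points: uniqueness comes uniformly from local invertibility of the rules (the inverse of $S_b$ is $S_{b^{-1}}$ by MCQ axiom (2), and at a vertex $b=a^{-1}(ab)$ is recovered inside $G_\lambda$); existence is a finite case check pairing each Y-oriented move with an MCQ axiom; the region (X-set) part rides on top via $y*(ab)=(y*a)*b$ and $(y*a)*b=(y*b)*(a*b)$, with invertibility of $y\mapsto y*a$ since $(y*a)*a^{-1}=y*e_\lambda=y$; and the genuinely MCQ-specific issue is well-definedness of vertices after a move, supplied by $(4')$.

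Two attributions should be repaired when you carry out the cases, though neither breaks the method. First, the move you describe as ``composing two vertices'' (the IH-type move R6, which involves two vertices and no crossings) is governed by \emph{associativity} of the partial group operation, $(ab)c=a(bc)$ in $G_\lambda$ --- as the paper's chain computation in Figure~\ref{fig:weights on R6 moves} makes explicit --- not by axiom (2); axiom (2), $x*(ab)=(x*a)*b$, is what you need when a strand passes \emph{under} the two edges at a vertex versus under the merged edge, and axiom (4) when it passes over (the R4/R5-type configurations of Figure~\ref{crossfinger}). Second, and correspondingly, the component-tracking subtlety you isolate --- that $a*x$ and $b*x$ must land in a single component before the product $(a*x)(b*x)$ is even meaningful --- arises exactly for those vertex-past-strand moves, not for R6: in R6 all colors at the two vertices lie in one $G_\lambda$ automatically from the vertex rules, and the only extra care needed there is the orientation of the middle edge, which you correctly handle via the paper's remark that reversing an admissible middle-edge orientation is a composite of two Y-oriented R6 moves. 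With these labels corrected, the case checks go through exactly as you outline.
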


For an $X_Y$-coloring $C$ of a diagram $D$ of an $S^1$-oriented handlebody-link, 
we define the \textit{local chain}s 
$w(\xi;C)\in C_2(X)_Y$ 
at each crossing $\xi$ and each vertex $\xi$ of $D$ as depicted in Figure~\ref{fig:weights}.
We define a chain $W(D;C)\in C_2(X)_Y$ by
\[ W(D;C)=\sum_\xi w(\xi;C), \]
where $\xi$ runs over all crossings and vertices of $D$.
This is similar to the definitions found in \cite{CarterKamadaSaito01} for links and surface-links, and 
in \cite{IshiiIwakiri12} for handlebody-links. 

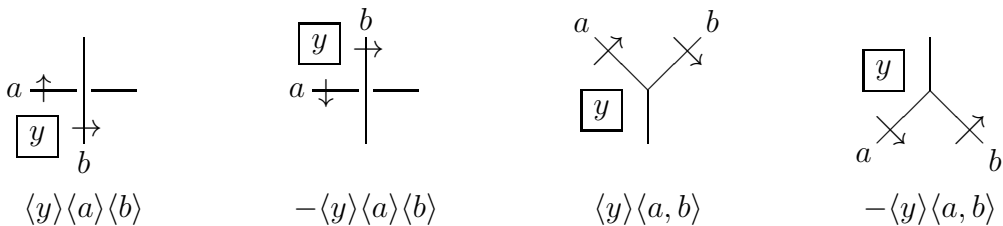
\begin{figure}[htb]
\mbox{}\hfill
\begin{picture}(60,80)
 \put(10,50){\line(1,0){17}}
 \put(50,50){\line(-1,0){17}}
 \put(30,30){\line(0,1){40}}
 \put(15,51){\makebox(0,0){$\uparrow$}}
 \put(31,35){\makebox(0,0){$\rightarrow$}}
 \put(7,50){\makebox(0,0)[r]{$a$}}
 \put(30,27){\makebox(0,0)[t]{$b$}}
 \put(5,25){\framebox(15,15){$y$}}
 \put(30,5){\makebox(0,0){$\langle y\rangle\langle a\rangle\langle b\rangle$}}
\end{picture}
\hfill
\begin{picture}(60,80)
 \put(10,50){\line(1,0){17}}
 \put(50,50){\line(-1,0){17}}
 \put(30,30){\line(0,1){40}}
 \put(15,49){\makebox(0,0){$\downarrow$}}
 \put(31,65){\makebox(0,0){$\rightarrow$}}
 \put(7,50){\makebox(0,0)[r]{$a$}}
 \put(30,73){\makebox(0,0)[b]{$b$}}
 \put(5,60){\framebox(15,15){$y$}}
 \put(30,5){\makebox(0,0){$-\langle y\rangle\langle a\rangle\langle b\rangle$}}
\end{picture}
\hfill
\begin{picture}(60,80)
 \put(10,70){\line(1,-1){20}}
 \put(50,70){\line(-1,-1){20}}
 \put(30,30){\line(0,1){20}}
 \put(15,65){\makebox(0,0){$\nearrow$}}
 \put(45,65){\makebox(0,0){$\searrow$}}
 \put(8,72){\makebox(0,0)[br]{$a$}}
 \put(52,72){\makebox(0,0)[bl]{$b$}}
 \put(5,35){\framebox(15,15){$y$}}
 \put(30,5){\makebox(0,0){$\langle y\rangle\langle a,b\rangle$}}
\end{picture}
\hfill
\begin{picture}(60,80)
 \put(10,30){\line(1,1){20}}
 \put(50,30){\line(-1,1){20}}
 \put(30,70){\line(0,-1){20}}
 \put(15,35){\makebox(0,0){$\searrow$}}
 \put(45,35){\makebox(0,0){$\nearrow$}}
 \put(8,28){\makebox(0,0)[tr]{$a$}}
 \put(52,28){\makebox(0,0)[tl]{$b$}}
 \put(5,50){\framebox(15,15){$y$}}
 \put(30,5){\makebox(0,0){$-\langle y\rangle\langle a,b\rangle$}}
\end{picture}
\hfill\mbox{}
\caption{Local chains represented by crossings and vertices}
\label{fig:weights}
\end{figure}

\begin{lemma} \label{lem:W(D;C)_2-cycle}
The chain $W(D;C)$ is a $2$-cycle of $C_*(X)_Y$.
Further, for cohomologous $2$-cocycles $\theta,\theta'$ of $C^*(X;A)_Y$,
we have $\theta(W(D;C))=\theta'(W(D;C))$.
\end{lemma}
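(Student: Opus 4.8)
The plan is to establish the cycle condition $\partial_2 W(D;C)=0$ first; the cohomological invariance then drops out formally. For the second assertion, suppose $\theta$ and $\theta'$ are cohomologous, say $\theta'=\theta+\delta f$ with $f\in C^1(X;A)_Y$ and $\delta$ the coboundary dual to $\partial_2$. Then
\[ \theta'(W(D;C))-\theta(W(D;C))=(\delta f)(W(D;C))=f(\partial_2 W(D;C)), \]
which vanishes once $W(D;C)$ is known to be a $2$-cycle. Thus the entire content of the lemma lies in the first statement.

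To prove $\partial_2 W(D;C)=0$, I would work with the evident representatives in $P_2(X)_Y$ of the local chains read off from Figure~\ref{fig:weights} (namely $\pm\langle y\rangle\langle a\rangle\langle b\rangle$ at a crossing and $\pm\langle y\rangle\langle a,b\rangle$ at a vertex) and show that the sum of their boundaries already vanishes in $P_1(X)_Y$, which then maps to $0$ in $C_1(X)_Y$. Applying the displayed formulas for $\partial_2$, every term produced is a $1$-chain of the form $\pm\langle y'\rangle\langle c\rangle$. I will read each such term as a flag attached to an \emph{edge} of the diagram---a segment of a colored arc lying between two consecutive crossings or vertices---together with the region colored $y'$ on the tail side of its normal orientation; this reading is consistent because $\partial_1\langle y'\rangle\langle c\rangle=\langle y'*c\rangle-\langle y'\rangle$ records exactly the two regions flanking $c$.

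The heart of the argument is the observation that, at each crossing and vertex, the boundary of the local chain contributes exactly one such flag for every incident edge: at a crossing these are the two segments of the over-arc together with the incoming and outgoing under-arcs, and at a vertex they are the three arcs meeting there. For instance, at the positive crossing with under-arc $a$, over-arc $b$ and region $y$ the expansion gives $\langle y*a\rangle\langle b\rangle-\langle y\rangle\langle b\rangle-\langle y*b\rangle\langle a*b\rangle+\langle y\rangle\langle a\rangle$, whose four terms are precisely the tail-side flags of the upper over-segment, the lower over-segment, the outgoing under-arc and the incoming under-arc; the coloring rules of Figure~\ref{fig:coloring} (together with the $X$-set axiom $(y*a)*b=(y*b)*(a*b)$) force these region colors and make them mutually consistent. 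Since every edge of the diagram has exactly two endpoints among the crossings and vertices, and since both its color and its tail-side region are the same viewed from either endpoint, each edge receives exactly two flags. I would then verify that these two flags carry opposite signs and hence cancel, so that the total boundary is $0$.

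The main obstacle is precisely this sign check: one must confirm, across all local configurations---positive and negative crossings, the two vertex types of Figure~\ref{fig:weights}, and the various orientations an edge may inherit relative to the feature at each of its ends---that the two contributions to a shared edge always appear with opposite signs. Once a representative case is done (for example, an over-arc running straight through two consecutive crossings contributes $+\langle y_2\rangle\langle c\rangle$ from the lower one and $-\langle y_2\rangle\langle c\rangle$ from the upper one to their common middle segment), the remaining cases are of the same local nature and differ only in the bookkeeping of normals and signs, which the coloring relations render routine.
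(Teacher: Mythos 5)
Your proposal is correct and takes essentially the same approach as the paper: your ``edges'' with tail-side flags are precisely the paper's semi-arcs $\alpha$ with their regions $R_\alpha$ and signs $\epsilon(\alpha;\xi)$ ($+1$ exactly when the orientation of $\alpha$ points toward $\xi$), and the paper cancels, just as you do, the two contributions each semi-arc receives from its two endpoints, which carry opposite signs because a semi-arc points toward exactly one of them. Your formal reduction of the cohomologous-cocycle claim to the cycle condition via $(\delta f)(W(D;C))=f(\partial_2 W(D;C))=0$ is likewise how the paper handles the second assertion.
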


\begin{proof}
It is sufficient to show that $W(D;C)$ is a $2$-cycle of $C_*(X)_Y$.
We denote by $\mathcal{SA}(D)$ the set of semi-arcs of $D$, where a semi-arc is a piece of a curve each of whose endpoints is a crossing or a vertex.
We denote by $\mathcal{SA}(D;\xi)$ the set of semi-arcs incident to $\xi$, where $\xi$ is a crossing or a vertex of $D$.

For a semi-arc $\alpha$, there is a unique region $R_\alpha$ facing $\alpha$ such that the normal orientation of $\alpha$ points from the region $R_\alpha$ to the opposite region with respect to $\alpha$.
For a semi-arc $\alpha$ incident to a crossing or a vertex $\xi$, we
define
\[ \epsilon(\alpha;\xi):=\begin{cases}
1 & \text{if the orientation of $\alpha$ points to $\xi$,} \\
-1 & \text{otherwise.}
\end{cases} \]

Let $\chi_1,\ldots,\chi_4$ and $\omega_1,\omega_2,\omega_3$ be respectively the semi-arcs incident to a crossing $\chi$ and a vertex $\omega$ as depicted in Figure~\ref{fig:semi-arcs}.
From
\begin{align*}
&\partial_2(w(\chi;C))=\sum_{\alpha\in\mathcal{SA}(D;\chi)}\epsilon(\alpha;\chi)\langle C(R_{\alpha})\rangle\langle C(\alpha)\rangle, \\
&\partial_2(w(\omega;C))=\sum_{\alpha\in\mathcal{SA}(D;\omega)}\epsilon(\alpha;\omega)\langle C(R_{\alpha})\rangle\langle C(\alpha)\rangle,
\end{align*}
it follows that
\[ \partial_2(W(D;C))
=\sum_\chi\partial_2(w(\chi;C))
+\sum_\omega\partial_2(w(\omega;C))=0, \]
where $\chi$ and $\omega$ respectively run over all crossings and vertices of $D$.
\end{proof}

\begin{figure}[htb]
\mbox{}\hfill
\begin{picture}(60,60)
 \put(10,30){\line(1,0){17}}
 \put(50,30){\line(-1,0){17}}
 \put(30,10){\line(0,1){40}}
 \put(15,31){\makebox(0,0){$\uparrow$}}
 \put(31,15){\makebox(0,0){$\rightarrow$}}
 \put(7,30){\makebox(0,0)[r]{$\chi_1$}}
 \put(53,30){\makebox(0,0)[l]{$\chi_2$}}
 \put(30,53){\makebox(0,0)[b]{$\chi_4$}}
 \put(30,7){\makebox(0,0)[t]{$\chi_3$}}
\end{picture}
\hfill
\begin{picture}(60,60)
 \put(10,30){\line(1,0){17}}
 \put(50,30){\line(-1,0){17}}
 \put(30,10){\line(0,1){40}}
 \put(15,29){\makebox(0,0){$\downarrow$}}
 \put(31,45){\makebox(0,0){$\rightarrow$}}
 \put(7,30){\makebox(0,0)[r]{$\chi_1$}}
 \put(53,30){\makebox(0,0)[l]{$\chi_2$}}
 \put(30,53){\makebox(0,0)[b]{$\chi_3$}}
 \put(30,7){\makebox(0,0)[t]{$\chi_4$}}
\end{picture}
\hfill
\begin{picture}(60,60)
 \put(10,50){\line(1,-1){20}}
 \put(50,50){\line(-1,-1){20}}
 \put(30,10){\line(0,1){20}}
 \put(15,45){\makebox(0,0){$\nearrow$}}
 \put(45,45){\makebox(0,0){$\searrow$}}
 \put(31,15){\makebox(0,0){$\rightarrow$}}
 \put(8,52){\makebox(0,0)[br]{$\omega_1$}}
 \put(52,52){\makebox(0,0)[bl]{$\omega_2$}}
 \put(30,5){\makebox(0,0)[t]{$\omega_3$}}
\end{picture}
\hfill
\begin{picture}(60,60)
 \put(10,10){\line(1,1){20}}
 \put(50,10){\line(-1,1){20}}
 \put(30,50){\line(0,-1){20}}
 \put(15,15){\makebox(0,0){$\searrow$}}
 \put(45,15){\makebox(0,0){$\nearrow$}}
 \put(31,45){\makebox(0,0){$\rightarrow$}}
 \put(8,8){\makebox(0,0)[tr]{$\omega_1$}}
 \put(52,8){\makebox(0,0)[tl]{$\omega_2$}}
 \put(30,55){\makebox(0,0)[b]{$\omega_3$}}
\end{picture}
\hfill\mbox{}
\caption{Semi-arcs near crossings and vertices} 
\label{fig:semi-arcs}
\end{figure}
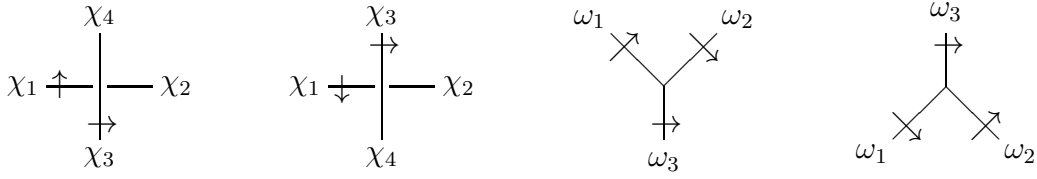

\begin{lemma} \label{lem:W(D;C)_Rmove}
Let $D$ be a diagram of an $S^1$-oriented handlebody-link $H$.
Let $D'$ be a diagram obtained by applying one of Y-oriented R1--R6 moves to the diagram $D$ once.
Let $C$ be an $X_Y$-coloring of $D$, let $C'$ be the unique $X_Y$-coloring of $D'$ such that $C$ and $C'$ coincide except near a point where the move applied.
Then we have $[W(D;C)]=[W(D';C')]$ in $H_2(X)_Y$.
\end{lemma}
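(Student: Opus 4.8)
The plan is to verify the invariance move by move, reducing the claim to the fact that, for each of the moves R1--R6, the chains $W(D;C)$ and $W(D';C')$ differ by a boundary and a degeneracy. Since $C$ and $C'$ agree outside a small disk $\Delta$ where the move is applied, the difference $W(D;C)-W(D';C')$ is supported on the crossings and vertices inside $\Delta$; everything outside cancels because the local chains $w(\xi;C)$ and $w(\xi;C')$ coincide at those unaffected $\xi$. Thus for each move type I would write down the finite sum of local chains on each side, using the colorings forced by Proposition~\ref{prop:coloring} and the coloring rules in Figure~\ref{fig:coloring}, and show that the resulting element of $P_2(X)_Y$ lies in the image of $\partial_3$ plus $D_2(X)_Y$, so that its class in $H_2(X)_Y$ vanishes.

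The key computational inputs are precisely the four boundary formulas for $\partial_3$ computed in the Example, together with the generators of $D_2(X)_Y$ and $D_3(X)_Y$ displayed just before the Definition of $C_*(X)_Y$. Concretely, I expect each move to match a specific boundary or degeneracy relation: an R3-type move (the triple-point move) should produce exactly $\partial_3(\langle y\rangle\langle a\rangle\langle b\rangle\langle c\rangle)$; a move sliding a strand past a trivalent vertex should produce $\partial_3(\langle y\rangle\langle a,b\rangle\langle x\rangle)$ or $\partial_3(\langle y\rangle\langle x\rangle\langle a,b\rangle)$; and the R1-type (curl) and R2-type (cancelling-crossing) moves should produce either a boundary of a term of the form $\langle y\rangle\langle a,a\rangle$ or a degeneracy of the form $\langle y\rangle\langle a\rangle\langle a\rangle$, both of which are trivial in homology since elements $\langle y\rangle\cdots\langle a\rangle\langle a\rangle\cdots$ lie in $D_*(X)_Y$ as remarked in the text. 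The vertex-slide and vertex-merge moves peculiar to handlebody-links (R4--R6) are where the partial group operation enters, and they should match the group-flavored boundaries $\partial_3(\langle y\rangle\langle a\rangle\langle b,c\rangle)$ and $\partial_3(\langle y\rangle\langle a,b,c\rangle)$.

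The main obstacle will be bookkeeping of signs and of the basepoint regions. Each local chain carries a sign $\epsilon(\alpha;\xi)$ recording whether a strand points into or out of $\xi$, and the Y-orientation constraint means that at a trivalent vertex the three incident edges have a prescribed in/out pattern; I must track these carefully so that the collected terms assemble into the stated $\partial_3$-image rather than its negative or a permuted variant. A further subtlety flagged in the text surrounds R6 when all endpoints are oriented downward: reversing the middle edge is realized as two Y-oriented R6 moves, so I would treat that reversal as a composite and check consistency there. Once the sign conventions are fixed by the $\partial_2(w(\xi;C))$ formulas already established in the proof of Lemma~\ref{lem:W(D;C)_2-cycle}, the remaining verification for each move is a direct comparison with one line of the Example, and the equality $[W(D;C)]=[W(D';C')]$ follows for every Y-oriented R1--R6 move, hence for any finite sequence of them.
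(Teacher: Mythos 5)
Your proposal is correct and takes essentially the same route as the paper's proof: a move-by-move check that the local difference $W(D;C)-W(D';C')$ lies in $\partial_3(P_3(X)_Y)+D_2(X)_Y$, using the four $\partial_3$ formulas of the Example for R3, R5 (strand past a vertex, via $\partial_3(\langle y\rangle\langle a,b\rangle\langle x\rangle)$ and $\partial_3(\langle y\rangle\langle x\rangle\langle b,c\rangle)$) and R6 (via $\partial_3(\langle y\rangle\langle a,b,c\rangle)$, with the middle-edge reversal treated as two Y-oriented R6 moves, as you note). The only per-move reassignments relative to the paper: R2 cancels identically by the signs of the two crossings (no boundary or degeneracy needed), and R4 --- the crossing absorbed into a vertex --- is exactly the generator $\langle y\rangle\langle a\rangle\langle b\rangle-\langle y\rangle\langle a,b\rangle+\langle y\rangle\langle b,a*b\rangle$ of $D_2(X)_Y$ rather than a $\partial_3$-image, but both cases are subsumed by your stated criterion, so the plan goes through.
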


\begin{proof}
We have the invariance under the Y-oriented R1 and R4 moves, since the difference between $[W(D;C)]$ and $[W(D';C')]$ is an element of $D_2(X)_Y$.
The invariance under the Y-oriented R2 move follows from the signs of the crossings which appear in the move.
We have the invariance under the Y-oriented R3, R5, and R6 move, since the difference between $[W(D;C)]$ and $[W(D';C')]$ is an image of $\partial_3$.
See Figure~\ref{fig:weights on R6 moves} for Y-oriented R6 moves, where all arcs are directed from top to bottom.
\end{proof}

\begin{figure}[htb]
\[ \begin{array}{ccc@{\hspace{20mm}}ccc}
\begin{minipage}{47pt}
\begin{picture}(47,60)
\thicklines
\qbezier(23,42)(19,46)(15,50)
\qbezier(23,42)(27,46)(31,50)
\put(23,42){\line(0,-1){8}}
\put(47,34){\line(0,1){16}}
\put(35,22){\line(-1,1){12}}
\put(35,22){\line(1,1){12}}
\put(35,22){\line(0,-1){12}}
\put(15,53){\makebox(0,0)[b]{$a$}}
\put(31,53){\makebox(0,0)[b]{$b$}}
\put(47,53){\makebox(0,0)[b]{$c$}}
\put(25,28){\makebox(0,0)[r]{$ab$}}
\put(35,7){\makebox(0,0)[t]{$abc$}}
\put(0,25){\framebox(10,10){$y$}}
\end{picture}
\end{minipage}
& \leftrightarrow &
\begin{minipage}{47pt}
\begin{picture}(47,60)
\thicklines
\put(15,34){\line(0,1){16}}
\qbezier(39,42)(35,46)(31,50)
\qbezier(39,42)(43,46)(47,50)
\put(39,42){\line(0,-1){8}}
\put(27,22){\line(-1,1){12}}
\put(27,22){\line(1,1){12}}
\put(27,22){\line(0,-1){12}}
\put(15,53){\makebox(0,0)[b]{$a$}}
\put(31,53){\makebox(0,0)[b]{$b$}}
\put(47,53){\makebox(0,0)[b]{$c$}}
\put(37,27){\makebox(0,0)[l]{$bc$}}
\put(27,7){\makebox(0,0)[t]{$abc$}}
\put(0,25){\framebox(10,10){$y$}}
\end{picture}
\end{minipage}
&
\begin{minipage}{47pt}
\begin{picture}(47,60)
\thicklines
\put(35,38){\line(0,1){12}}
\put(35,38){\line(1,-1){12}}
\put(35,38){\line(-1,-1){12}}
\put(23,18){\line(0,1){8}}
\qbezier(23,18)(19,14)(15,10)
\qbezier(23,18)(27,14)(31,10)
\put(47,10){\line(0,1){16}}
\put(35,53){\makebox(0,0)[b]{$abc$}}
\put(24,31){\makebox(0,0)[r]{$ab$}}
\put(15,7){\makebox(0,0)[t]{$a$}}
\put(32,7){\makebox(0,0)[t]{$b$}}
\put(47,7){\makebox(0,0)[t]{$c$}}
\put(0,25){\framebox(10,10){$y$}}
\end{picture}
\end{minipage}
& \leftrightarrow &
\begin{minipage}{47pt}
\begin{picture}(47,60)
\thicklines
\put(27,38){\line(0,1){12}}
\put(27,38){\line(1,-1){12}}
\put(27,38){\line(-1,-1){12}}
\put(15,10){\line(0,1){16}}
\put(39,18){\line(0,1){8}}
\qbezier(39,18)(35,14)(31,10)
\qbezier(39,18)(43,14)(47,10)
\put(27,53){\makebox(0,0)[b]{$abc$}}
\put(38,32){\makebox(0,0)[l]{$bc$}}
\put(15,7){\makebox(0,0)[t]{$a$}}
\put(31,7){\makebox(0,0)[t]{$b$}}
\put(47,7){\makebox(0,0)[t]{$c$}}
\put(0,25){\framebox(10,10){$y$}}
\end{picture}
\end{minipage}
\vspace{5mm}\\
\begin{array}{@{}l@{}}
+\langle y\rangle\langle a,b\rangle \\
+\langle y\rangle\langle ab,c\rangle
\end{array}
&&\begin{array}{@{}l@{}}
+\langle y*a\rangle\langle b,c\rangle \\
+\langle y\rangle\langle a,bc\rangle
\end{array}
&
\begin{array}{@{}l@{}}
-\langle y\rangle\langle ab,c\rangle \\
-\langle y\rangle\langle a,b\rangle
\end{array}
&&\begin{array}{@{}l@{}}
-\langle y\rangle\langle a,bc\rangle \\
-\langle y*a\rangle\langle b,c\rangle
\end{array}
\vspace{10mm}\\
\begin{minipage}{52pt}
\begin{picture}(52,52)
\thicklines
\put(15,26){\line(0,1){16}}
\put(39,34){\line(0,1){8}}
\qbezier(39,34)(43,30)(47,26)
\qbezier(23,18)(19,22)(15,26)
\put(23,18){\line(1,1){16}}
\put(23,18){\line(0,-1){8}}
\put(47,10){\line(0,1){16}}
\put(15,45){\makebox(0,0)[b]{$a$}}
\put(39,45){\makebox(0,0)[b]{$bc$}}
\put(31,23){\makebox(0,0)[l]{$b$}}
\put(23,7){\makebox(0,0)[t]{$ab$}}
\put(47,7){\makebox(0,0)[t]{$c$}}
\put(0,21){\framebox(10,10){$y$}}
\end{picture}
\end{minipage}
& \leftrightarrow &
\begin{minipage}{40pt}
\begin{picture}(40,52)
\thicklines
\put(25,32){\line(-1,1){10}}
\put(25,32){\line(1,1){10}}
\put(25,20){\line(0,1){12}}
\put(25,20){\line(-1,-1){10}}
\put(25,20){\line(1,-1){10}}
\put(15,45){\makebox(0,0)[b]{$a$}}
\put(35,45){\makebox(0,0)[b]{$bc$}}
\put(26,26){\makebox(0,0)[l]{$abc$}}
\put(15,7){\makebox(0,0)[t]{$ab$}}
\put(35,7){\makebox(0,0)[t]{$c$}}
\put(0,21){\framebox(10,10){$y$}}
\end{picture}
\end{minipage}
&
\begin{minipage}{40pt}
\begin{picture}(40,52)
\thicklines
\put(25,32){\line(-1,1){10}}
\put(25,32){\line(1,1){10}}
\put(25,20){\line(0,1){12}}
\put(25,20){\line(-1,-1){10}}
\put(25,20){\line(1,-1){10}}
\put(15,45){\makebox(0,0)[b]{$ab$}}
\put(35,45){\makebox(0,0)[b]{$c$}}
\put(26,26){\makebox(0,0)[l]{$abc$}}
\put(15,7){\makebox(0,0)[t]{$a$}}
\put(35,7){\makebox(0,0)[t]{$bc$}}
\put(0,21){\framebox(10,10){$y$}}
\end{picture}
\end{minipage}
& \leftrightarrow &
\begin{minipage}{52pt}
\begin{picture}(52,52)
\thicklines
\put(23,34){\line(0,1){8}}
\qbezier(23,34)(19,30)(15,26)
\put(47,26){\line(0,1){16}}
\put(15,10){\line(0,1){16}}
\put(39,18){\line(-1,1){16}}
\qbezier(39,18)(43,22)(47,26)
\put(39,18){\line(0,-1){8}}
\put(24,45){\makebox(0,0)[b]{$ab$}}
\put(47,45){\makebox(0,0)[b]{$c$}}
\put(31,30){\makebox(0,0)[l]{$b$}}
\put(15,7){\makebox(0,0)[t]{$a$}}
\put(39,7){\makebox(0,0)[t]{$bc$}}
\put(0,21){\framebox(10,10){$y$}}
\end{picture}
\end{minipage}
\vspace{5mm}\\
\begin{array}{@{}l@{}}
-\langle y*a\rangle\langle b,c\rangle \\
+\langle y\rangle\langle a,b\rangle
\end{array}
&&\begin{array}{@{}l@{}}
+\langle y\rangle\langle a,bc\rangle \\
-\langle y\rangle\langle ab,c\rangle
\end{array}
&
\begin{array}{@{}l@{}}
\langle y\rangle\langle ab,c\rangle \\
-\langle y\rangle\langle a,bc\rangle
\end{array}
&&\begin{array}{@{}l@{}}
-\langle y\rangle\langle a,b\rangle \\
+\langle y*a\rangle\langle b,c\rangle
\end{array}
\end{array} \]
\caption{Chains for Y-oriented R6 moves}
\label{fig:weights on R6 moves}
\end{figure}
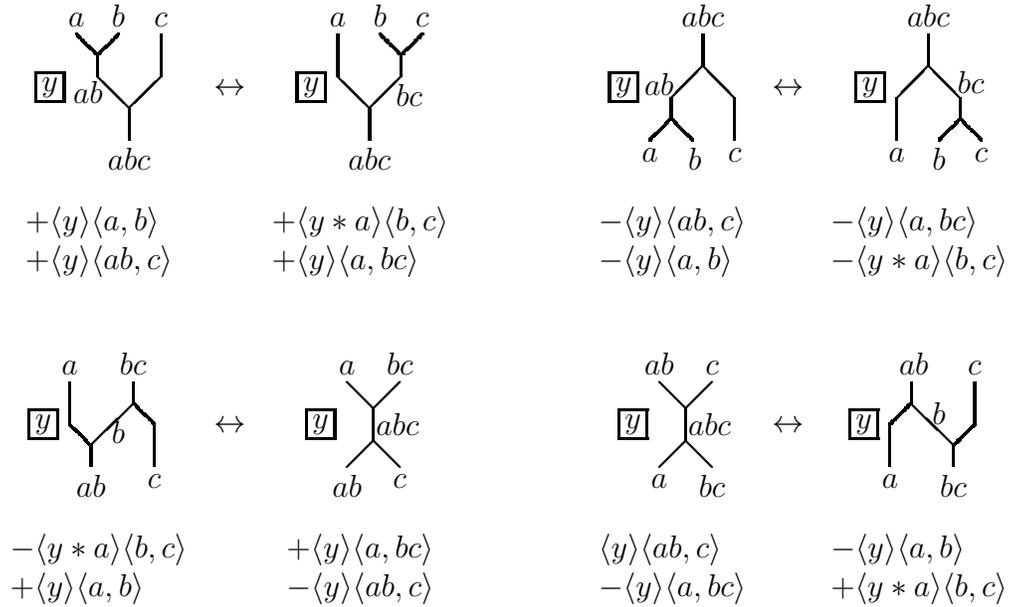

For a $2$-cocycle $\theta$ of $C^*(X;A)_Y$, we define
\begin{align*}
\mathcal{H}(D)
&:=\{[W(D;C)]\in H_2(X)_Y\,|\,C\in\operatorname{Col}_X(D)_Y\}, \\
\Phi_\theta(D)
&:=\{\theta(W(D;C))\in A\,|\,C\in\operatorname{Col}_X(D)_Y\}
\end{align*}
as multisets.
By Lemmas~\ref{lem:W(D;C)_2-cycle}, \ref{lem:W(D;C)_Rmove}, we have the following theorem.

\begin{theorem}
Let $D$ be a diagram of an $S^1$-oriented handlebody-link $H$.
Then $\mathcal{H}(D)$ and $\Phi_\theta(D)$ are invariants of $H$.
\end{theorem}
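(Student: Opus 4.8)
The plan is to reduce the statement to invariance under a single Reidemeister move and then invoke the two preceding lemmas. Recall from this section (via \cite{Ishii16}) that two diagrams represent equivalent $S^1$-oriented handlebody-links if and only if they are related by a finite sequence of Y-oriented R1--R6 moves. Hence it suffices to show that both multisets $\mathcal{H}(D)$ and $\Phi_\theta(D)$ are unchanged when $D$ is replaced by a diagram $D'$ obtained from $D$ by a single Y-oriented R1--R6 move; the general case then follows by induction on the length of such a sequence.

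So fix a single move taking $D$ to $D'$. First I would use Proposition~\ref{prop:coloring} to produce, for each $X_Y$-coloring $C$ of $D$, the unique $X_Y$-coloring $C'$ of $D'$ agreeing with $C$ outside the disk in which the move is performed. Applying the inverse move together with Proposition~\ref{prop:coloring} again yields a map in the opposite direction, and by uniqueness the two assignments are mutually inverse. Thus $C\mapsto C'$ is a bijection $\operatorname{Col}_X(D)_Y\to\operatorname{Col}_X(D')_Y$. This bijection is what lets us compare the two families as \emph{multisets} rather than merely as sets.

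Next, for each such pair $(C,C')$, Lemma~\ref{lem:W(D;C)_Rmove} gives $[W(D;C)]=[W(D';C')]$ in $H_2(X)_Y$. Since the bijection above matches $C$ with $C'$ and their homology classes coincide, the multisets $\mathcal{H}(D)$ and $\mathcal{H}(D')$ are equal. For $\Phi_\theta$, observe that $W(D;C)$ is a $2$-cycle by Lemma~\ref{lem:W(D;C)_2-cycle}; since $\theta$ is a $2$-cocycle it vanishes on $2$-boundaries, so $\theta(W(D;C))$ depends only on the class $[W(D;C)]$. Combined with $[W(D;C)]=[W(D';C')]$ this gives $\theta(W(D;C))=\theta(W(D';C'))$, whence $\Phi_\theta(D)=\Phi_\theta(D')$ as multisets. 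The final clause of Lemma~\ref{lem:W(D;C)_2-cycle} shows, moreover, that $\Phi_\theta$ depends only on the cohomology class of $\theta$.

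The substantive single-move invariance of the homology class is precisely Lemma~\ref{lem:W(D;C)_Rmove}, so it is already in hand. The step I would regard as the main obstacle is verifying that $C\mapsto C'$ is an honest bijection, since the invariants are defined as multisets and therefore remember multiplicities; it is not enough to know that each individual class $[W(D;C)]$ occurs in $\mathcal{H}(D')$, one must match them with the correct multiplicity. Establishing this bijection cleanly is exactly where Proposition~\ref{prop:coloring}, applied to both the move and its inverse, is needed.
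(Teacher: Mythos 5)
Your proposal is correct and follows essentially the same route as the paper: the paper derives the theorem directly from Lemmas~\ref{lem:W(D;C)_2-cycle} and \ref{lem:W(D;C)_Rmove}, with Proposition~\ref{prop:coloring} supplying exactly the coloring bijection you describe. You have merely made explicit the standard steps the paper leaves implicit (reduction to a single Y-oriented move, the multiset-matching via the bijection, and the vanishing of the cocycle $\theta$ on boundaries), so there is nothing to correct.
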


For an $S^1$-oriented handlebody-link $H$, 
let $H^*$ be the mirror image of $H$, and 
$-H$ be the $S^1$-oriented handleboby-link obtained from $H$ 
by reversing its $S^1$-orientation. 
Then we also have 
\begin{align*}
&\mathcal{H}(-H^*)=-\mathcal{H}(H),
&&\Phi_\theta(-H^*)=-\Phi_\theta(H) , 
\end{align*}
%for the mirror image $H^*$ of a handlebody-link $H$, 
where $-S=\{-a\,|\,a\in S\}$ for a multiset $S$.
It is desirable to further study these invariants and applications to handlebody-links.

%%%%%%%%%%%%%%%%%%%%%%%%%%%%%%%%%%%%%%%
%%%%%%%%%%%%%%%%%%%%%%%%%%%%%%%%%%%%%%%
\section{For unoriented handlebody-links}
\label{sec:unori}

Let $X=\coprod_{\lambda\in\Lambda}G_\lambda$ be a multiple conjugation quandle, and let $Y$ be an $X$-set.
Let $D$ be a diagram of an (unoriented) handlebody-link $H$.
An $(X,\uparrow)$-color $C_\alpha$ of an arc $\alpha\in\mathcal{A}(D)$ is a map $C_\alpha$ from the set of orientations of the arc $\alpha$ to $X$ such that $C_\alpha(-o)=C_\alpha(o)^{-1}$, where $-o$ is the inverse of an orientation $o$.
An $(X,\uparrow)$-color $C_\alpha$ is represented by a pair of an orientation $o$ of $\alpha$ and an element $C_\alpha(o)\in X$ on the diagram $D$.
Two pairs $(o,a)$ and $(-o,a^{-1})$ represent the same $(X,\uparrow)$-color
(see Figure~\ref{fig:(o,a)=(-o,1/a)}).

\begin{figure}[htb]
\mbox{}\hfill
\begin{picture}(115,50)
\thicklines
\put(10,0){\line(0,1){40}}
\put(11,20){\makebox(0,0){$\rightarrow$}}
\put(18,15){\makebox(0,0){$a$}}
\put(50,20){\makebox(0,0){$=$}}
\put(90,0){\line(0,1){40}}
\put(89,20){\makebox(0,0){$\leftarrow$}}
\put(103,15){\makebox(0,0){$a^{-1}$}}
\end{picture}
\hfill\mbox{}
\caption{$(X,\uparrow)$-color}
\label{fig:(o,a)=(-o,1/a)}
\end{figure}

An \textit{$(X,\uparrow)$-coloring} $C$ of a diagram $D$ is an assignment of an $(X,\uparrow)$-color $C_\alpha$ to each arc $\alpha\in\mathcal{A}(D)$ satisfying the conditions depicted in the left two diagrams in Figure~\ref{fig:coloring(unoriented)} at each crossing and each vertex of $D$.
An $(X,\uparrow)_Y$-coloring $C$ of $D$ is an extension of an $(X,\uparrow)$-coloring of $D$ which assigns an element of $Y$ to each region $R\in\mathcal{R}(D)$ satisfying the condition depicted in the rightmost diagram in Figure~\ref{fig:coloring(unoriented)} at each arc.
We denote by $\operatorname{Col}_{(X,\uparrow)}(D)$ (resp.~$\operatorname{Col}_{(X,\uparrow)}(D)_Y$) the set of $(X,\uparrow)$-colorings (resp.~$(X,\uparrow)_Y$-colorings) of $D$.
The well-definedness of an $(X,\uparrow)$-coloring (resp.~$(X,\uparrow)_Y$-coloring) follows from
\begin{align*}
&(a^{-1})^{-1}=a,
&&a^{-1}*b=(a*b)^{-1},
&&(a*b)*b^{-1}=a, \\
&b(ab)^{-1}=a^{-1},
&&(ab)^{-1}a=b^{-1}. %, 
\end{align*}
The first three equalities are the defining conditions of a {\it good involution}
considered by Kamada and Oshiro~\cite{Kamada07,KamadaOshiro10}.
They used the notion of a good-involution
precisely to allow for appropriate changes of orientations.
Following their arguments,  
we can show the following proposition in the same way as Proposition~\ref{prop:coloring}. 

\begin{figure}[htb]
\mbox{}\hfill
\begin{picture}(70,80)
 \put(10,50){\line(1,0){17}}
 \put(50,50){\line(-1,0){17}}
 \put(30,30){\line(0,1){40}}
 \put(15,49){\makebox(0,0){$\downarrow$}}
 \put(45,49){\makebox(0,0){$\downarrow$}}
 \put(31,65){\makebox(0,0){$\rightarrow$}}
 \put(30,73){\makebox(0,0)[b]{$b$}}
 \put(7,50){\makebox(0,0)[r]{$a$}}
 \put(53,50){\makebox(0,0)[l]{$a*b$}}
\thicklines
 \put(30,30){\vector(0,-1){0}}
\end{picture}
\hfill
\begin{picture}(60,80)
 \put(10,70){\line(1,-1){20}}
 \put(50,70){\line(-1,-1){20}}
 \put(30,30){\line(0,1){20}}
 \put(15,65){\makebox(0,0){$\nearrow$}}
 \put(45,65){\makebox(0,0){$\searrow$}}
 \put(31,35){\makebox(0,0){$\rightarrow$}}
 \put(8,72){\makebox(0,0)[br]{$a$}}
 \put(52,72){\makebox(0,0)[bl]{$b$}}
 \put(30,27){\makebox(0,0)[t]{$ab$}}
 \put(30,5){\makebox(0,0){$a,b\in G_\lambda$}}
\end{picture}
\hfill
\begin{picture}(70,80)
 \put(30,30){\line(0,1){40}}
 \put(31,65){\makebox(0,0){$\rightarrow$}}
 \put(30,73){\makebox(0,0)[b]{$a$}}
 \put(10,45){\framebox(10,10){$x$}}
 \put(40,45){\framebox(25,10){$x*a$}}
\thicklines
 \put(30,30){\vector(0,-1){0}}
\end{picture}
\hfill\mbox{}
\caption{Rules of an unoriented coloring}
\label{fig:coloring(unoriented)}
\end{figure}
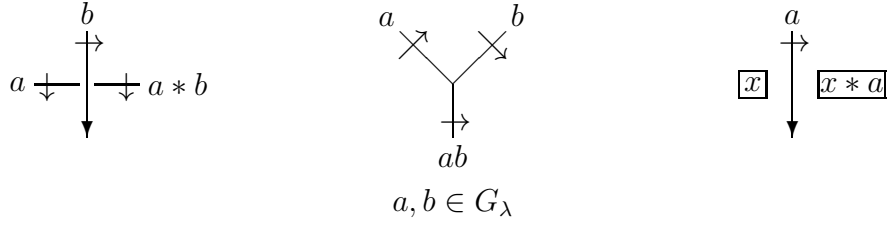

\begin{proposition}
Let $X=\coprod_{\lambda\in\Lambda}G_\lambda$ be a multiple conjugation quandle, and let $Y$ be an $X$-set.
Let $D$ be a diagram of a handlebody-link $H$.
Let $D'$ be a diagram obtained by applying one of the R1--R6 moves to the diagram $D$ once.
For an $(X,\uparrow)$-coloring (resp.~$(X,\uparrow)_Y$-coloring) $C$ of $D$, there is a unique $(X,\uparrow)$-coloring (resp.~$(X,\uparrow)_Y$-coloring) $C'$ of $D'$ which coincides with $C$ except near a point where the move applied.
\end{proposition}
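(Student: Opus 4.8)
The plan is to reduce the unoriented statement to its oriented counterpart, Proposition~\ref{prop:coloring}, by choosing auxiliary orientations inside the disk where the move is performed and then discarding them by means of the good involution. First I would record the precise meaning of the five displayed identities: they assert exactly that each coloring rule of Figure~\ref{fig:coloring(unoriented)} is unchanged when the orientation of a single arc is reversed and its color $a$ is replaced by $a^{-1}$. The three good involution identities $(a^{-1})^{-1}=a$, $a^{-1}*b=(a*b)^{-1}$, and $(a*b)*b^{-1}=a$ govern the crossing rule and the region rule $x\mapsto x*a$, while the two remaining identities $b(ab)^{-1}=a^{-1}$ and $(ab)^{-1}a=b^{-1}$ govern the trivalent vertex rule, in which the group multiplication of $G_\lambda$ appears. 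Consequently an $(X,\uparrow)$-coloring is the same data as an ordinary $X$-coloring once a representative orientation has been fixed on each arc, and the particular choice of representatives is immaterial.

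Next I would localize. The diagrams $D$ and $D'$ coincide outside a disk $B$ in which the move R1--R6 is applied, so an $(X,\uparrow)$-coloring $C'$ of $D'$ that agrees with $C$ away from $B$ is determined by what it does inside $B$. On the finitely many arcs meeting $\partial B$ I would fix orientations and read off from $C$ the resulting elements of $X$; reversing one of these orientations only replaces the color by its inverse, hence leaves the $(X,\uparrow)$-coloring outside $B$ untouched. I would then choose, inside $B$, a local Y-orientation of the tangle appearing in the move: at a crossing any orientation of the two strands is admissible, and at the trivalent vertex involved in an R6 move one may always orient the three incident arcs so that there is neither a source nor a sink, matching one of the standard vertex pictures of Figure~\ref{fig:coloring}. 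Any mismatch between such a local Y-orientation and the orientations initially chosen on $\partial B$ is corrected by reversing the offending arcs, which is harmless by the previous paragraph.

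With the move inside $B$ now presented as a Y-oriented R1--R6 move, Proposition~\ref{prop:coloring} supplies a unique $X$-coloring (resp.\ $X_Y$-coloring) of the oriented $D'$ extending the boundary data, and forgetting the auxiliary orientations turns this into the desired $(X,\uparrow)$-coloring (resp.\ $(X,\uparrow)_Y$-coloring) $C'$. For uniqueness, any two such $C'$ that agree with $C$ outside $B$ induce, under the same orientation choices, two oriented colorings of $D'$ that agree on $\partial B$; these coincide by the uniqueness clause of Proposition~\ref{prop:coloring}, so the two $(X,\uparrow)$-colorings are equal.

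I expect the main obstacle to be the vertex analysis for the R6 moves. Whereas crossings are automatically Y-orientable and are handled verbatim as in the oriented case, a trivalent vertex forces a genuine use of the good involution: depending on how the three arcs are oriented, the vertex may appear as a source or a sink rather than in the standard $2$-in-$1$-out form, and one must verify that reorienting an incident arc converts the coloring condition involving $ab$ into an equivalent one. This is precisely where the two identities $b(ab)^{-1}=a^{-1}$ and $(ab)^{-1}a=b^{-1}$, which lie beyond the purely quandle-theoretic good involution of Kamada--Oshiro, are required; checking that they cover every orientation pattern occurring in the R6 moves of Figure~\ref{fig:Rmove} is the one step that goes beyond a direct appeal to Proposition~\ref{prop:coloring}.
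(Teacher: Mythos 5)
Your proposal is correct and takes essentially the same route as the paper, whose own proof is only a sketch: it invokes the same five identities to show that every local coloring rule is invariant under reversing an arc's orientation while inverting its color (the good-involution mechanism of Kamada--Oshiro, with the two extra group identities handling trivalent vertices), and then asserts the result follows ``in the same way as Proposition~\ref{prop:coloring}.'' Your write-up simply makes explicit the localization to the disk of the move and the choice of auxiliary local Y-orientations that the paper leaves implicit.
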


Let $D_n^\uparrow(X)_Y$ be the subgroup of $P_n(X)_Y$ generated by the elements of the form
\[ \langle y\rangle\langle\boldsymbol{a}_1\rangle
\cdots\langle\boldsymbol{a}\rangle
\cdots\langle\boldsymbol{a}_k\rangle
+\langle y\rangle\langle\boldsymbol{a}_1\rangle
\cdots\langle\boldsymbol{a}\rangle_i^{-1}
\cdots\langle\boldsymbol{a}_k\rangle, \]
where $\langle a_1,\ldots,a_m\rangle_i^{-1}$ denotes
\[ \begin{cases}
*a_1\langle a_1^{-1},a_1a_2,a_3,\ldots,a_m\rangle
& \text{if $i=1$,} \\
\langle a_1,\ldots,a_{i-2},a_{i-1}a_i,a_i^{-1},a_ia_{i+1},a_{i+2},\ldots,a_m\rangle
& \text{if $i \neq1,m$,} \\
\langle a_1,\ldots,a_{m-2},a_{m-1}a_m,a_m^{-1}\rangle
& \text{if $i=m$.} \\ 
\end{cases} \]

The chain group $D^\uparrow_n(X)_Y$ will be called the \textit{group of orientation degeneracies}.
For example, $D^\uparrow_1(X)_Y$ is generated by the elements of the form
\[ \langle y\rangle\langle a\rangle+\langle y*a\rangle\langle a^{-1}\rangle, \]
and $D^\uparrow_2(X)_Y$ is generated by the elements of the form
\begin{align*}
&\langle y\rangle\langle a\rangle\langle b\rangle
+\langle y*a\rangle\langle a^{-1}\rangle\langle b\rangle,
&&\langle y\rangle\langle a\rangle\langle b\rangle
+\langle y*b\rangle\langle a*b\rangle\langle b^{-1}\rangle, \\
&\langle y\rangle\langle a,b\rangle
+\langle y*a\rangle\langle a^{-1},ab\rangle,
&&\langle y\rangle\langle a,b\rangle
+\langle y\rangle\langle ab,b^{-1}\rangle.
\end{align*}

We remark that the elements of the form
\begin{align*}
&\langle y\rangle\langle\boldsymbol{a}_1\rangle\cdots\langle a_1,\ldots,a_m\rangle
\cdots\langle\boldsymbol{a}_k\rangle \\
&\hspace{5mm}-(-1)^{m(m+1)/2}\langle y\rangle\langle\boldsymbol{a}_1\rangle\cdots
*(a_1\cdots a_m)\langle a_m^{-1},\ldots,a_1^{-1}\rangle
\cdots\langle\boldsymbol{a}_k\rangle
\end{align*}
belong to $D_n^\uparrow(X)_Y$.
Furthermore, we can prove that the elements of the form
\begin{align*}
&\langle y\rangle\langle\boldsymbol{a}_1\rangle\cdots
\langle a_1,\ldots,a_m\rangle\cdots\langle\boldsymbol{a}_k\rangle
-(-1)^{i(i+1)/2}\langle y\rangle\langle\boldsymbol{a}_1\rangle\cdots \\
&\hspace{5mm}*(a_1\cdots a_i)\langle a_i^{-1},\ldots,a_1^{-1},a_1\cdots a_{i+1},
a_{i+2},\ldots,a_m\rangle\cdots\langle\boldsymbol{a}_k\rangle
\end{align*}
belong to $D_n^\uparrow(X)_Y$ by induction.
%\begin{align*}
%&\langle a_1,\ldots,a_m\rangle \\
%&=(-1)^{i(i+1)/2}*(a_1\cdots a_i)\langle a_i^{-1},\ldots,a_1^{-1},a_1\cdots a_{i+1},a_{i+2},\ldots,a_m\rangle \\
%&=-(-1)^{i(i+1)/2}*(a_1\cdots a_i)\langle a_i^{-1},\ldots,a_2^{-1},a_2\cdots a_{i+1},a_{i+1}^{-1}\cdots a_1^{-1},a_1\cdots a_{i+2},a_{i+3},\ldots,a_m\rangle \\
%&=(-1)^{i(i+1)/2}*(a_1\cdots a_i)\langle a_i^{-1},\ldots,a_3^{-1},a_3\cdots a_{i+1},a_{i+1}^{-1}\cdots a_2^{-1},a_1^{-1},a_1\cdots a_{i+2},a_{i+3},\ldots,a_m\rangle \\
%&=-(-1)^{i(i+1)/2}*(a_1\cdots a_i)\langle a_i^{-1},\ldots,a_4^{-1},a_4\cdots a_{i+1},a_{i+1}^{-1}\cdots a_3^{-1},a_2^{-1},a_1^{-1},a_1\cdots a_{i+2},a_{i+3},\ldots,a_m\rangle \\
%&=\cdots \\
%&=(-1)^{(i+1)(i+2)/2}*(a_1\cdots a_i)\langle a_i^{-1},a_ia_{i+1},a_{i+1}^{-1}a_i^{-1}a_{i-1}^{-1},a_{i-2}^{-1},\ldots,a_1^{-1},a_1\cdots a_{i+2},a_{i+3},\ldots,a_m\rangle \\
%&=-(-1)^{(i+1)(i+2)/2}*(a_1\cdots a_i)\langle a_{i+1},a_{i+1}^{-1}a_i^{-1},a_{i-1}^{-1},\ldots,a_1^{-1},a_1\cdots a_{i+2},a_{i+3},\ldots,a_m\rangle \\
%&=(-1)^{(i+1)(i+2)/2}*(a_1\cdots a_{i+1})\langle a_{i+1}^{-1},a_i^{-1},\ldots,a_1^{-1},a_1\cdots a_{i+2},a_{i+3},\ldots,a_m\rangle.
%\end{align*}

\begin{lemma}
$D^\uparrow_*(X)_Y=(D^\uparrow_n(X)_Y,\partial_n)$ is a subcomplex of $P_*(X)_Y$.
\end{lemma}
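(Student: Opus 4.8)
The plan is to prove the inclusion $\partial_n\big(D^\uparrow_n(X)_Y\big)\subseteq D^\uparrow_{n-1}(X)_Y$. Since $\partial$ is linear, it suffices to evaluate it on a single generator
\[ G=\langle y\rangle\langle\boldsymbol{a}_1\rangle\cdots\langle\boldsymbol{a}\rangle\cdots\langle\boldsymbol{a}_k\rangle+\langle y\rangle\langle\boldsymbol{a}_1\rangle\cdots\langle\boldsymbol{a}\rangle_i^{-1}\cdots\langle\boldsymbol{a}_k\rangle, \]
in which the two summands differ only in that one distinguished block $\langle\boldsymbol{a}\rangle=\langle a_1,\dots,a_m\rangle$ has been replaced by its $i$-th inverted block $\langle\boldsymbol{a}\rangle_i^{-1}$. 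Using the Leibniz rule $\partial(\sigma\tau)=(\partial\sigma)\tau+(-1)^{|\sigma|}\sigma(\partial\tau)$ and the identity $\partial(\sigma*b)=(\partial\sigma)*b$ recorded in the proof that $P_*(X)_Y$ is a chain complex, I would split $\partial G$ into the \emph{external} terms, where $\partial$ differentiates a block $\langle\boldsymbol{a}_j\rangle$ other than the distinguished one, and the \emph{internal} term, where $\partial$ differentiates $\langle\boldsymbol{a}\rangle$ respectively $\langle\boldsymbol{a}\rangle_i^{-1}$.

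For the external terms the two summands of $G$ are differentiated in the same place and with the same global sign, so each resulting monomial still carries the distinguished block paired with its $i$-th inverse. When the differentiated block lies to the left of $\langle\boldsymbol{a}\rangle$ the distinguished block is untouched and the pairing is immediate; when it lies to the right, its leading face applies some $*b$ to everything on its left, including the distinguished block. Here I would invoke the compatibility identity $\big(\langle\boldsymbol{a}\rangle_i^{-1}\big)*b=\big(\langle\boldsymbol{a}*b\rangle\big)_i^{-1}$, which follows entrywise from the good-involution relation $a^{-1}*b=(a*b)^{-1}$ and the axioms $(xy)*b=(x*b)(y*b)$, $(y*a)*b=(y*b)*(a*b)$ (the last one used to commute the leading action $*a_1$ past $*b$ in the case $i=1$). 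Granting this, every external term of $\partial G$ has the shape $\tau+\tau_j^{-1}$ and hence lies in $D^\uparrow_{n-1}(X)_Y$.

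The core of the argument is the internal term, where I must show that $\partial\langle\boldsymbol{a}\rangle+\partial\big(\langle\boldsymbol{a}\rangle_i^{-1}\big)$ --- read as a block-level chain and inserted into the ambient word --- is a $\Z$-combination of block-level generators $\langle\boldsymbol{c}\rangle+\langle\boldsymbol{c}\rangle_{j}^{-1}$. The mechanism, which I would verify by expanding the face formula $\partial\langle c_1,\dots,c_\ell\rangle=*c_1\langle c_2,\dots,c_\ell\rangle+\sum_r(-1)^r\langle\dots,c_rc_{r+1},\dots\rangle+(-1)^\ell\langle\dots,c_{\ell-1}\rangle$, is the following: exactly the two faces of $\langle\boldsymbol{a}\rangle_i^{-1}$ that act on the inverted strand --- the merges $(a_{i-1}a_i)a_i^{-1}=a_{i-1}$ and $a_i^{-1}(a_ia_{i+1})=a_{i+1}$ (or the corresponding delete/merge pair in the boundary cases $i=1,m$) --- undo the inversion, and these cancel against the two matching faces of $\langle\boldsymbol{a}\rangle$. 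Every remaining face of $\langle\boldsymbol{a}\rangle_i^{-1}$ is precisely the $j$-inversion of the corresponding face of $\langle\boldsymbol{a}\rangle$ for the shifted index $j$, so these pair off, again using the good-involution relations and the identity $z*(a_ra_{r+1})=(z*a_r)*a_{r+1}$ to recognise the survivors as genuine generators of $D^\uparrow_{n-1}(X)_Y$.

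The main obstacle is precisely this internal matching: it requires careful sign bookkeeping --- combining the $(-1)^r$ of the face formula, the global position sign $(-1)^{|\cdots|}$, and the sign conventions built into the inverted-block notation --- together with separate handling of the three cases $i=1$, $1<i<m$, and $i=m$ in the definition of $\langle\boldsymbol{a}\rangle_i^{-1}$. To make the faces act transparently I would first establish, by the induction indicated above, the two remarks that the full reversal $*(a_1\cdots a_m)\langle a_m^{-1},\dots,a_1^{-1}\rangle$ and the partial reversals $*(a_1\cdots a_i)\langle a_i^{-1},\dots,a_1^{-1},a_1\cdots a_{i+1},a_{i+2},\dots,a_m\rangle$ lie in $D^\uparrow_n(X)_Y$ up to the sign $(-1)^{i(i+1)/2}$; on this more symmetric generating set the interaction of $\partial$ with a single reversal is cleaner, and the whole computation reduces to checking that $\partial$ commutes with reversal modulo the collapsing faces.
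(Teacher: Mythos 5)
Your proposal is correct and takes essentially the same route as the paper: the paper's proof is precisely the direct expansion of $\partial$ on a generator $\langle\boldsymbol{a}\rangle+\langle\boldsymbol{a}\rangle_i^{-1}$ (displayed for $i=1$ and $1<i<m$, with $i=m$ analogous), in which the two faces meeting the inverted strand cancel via $(a_{i-1}a_i)a_i^{-1}=a_{i-1}$ and $a_i^{-1}(a_ia_{i+1})=a_{i+1}$ and the surviving faces regroup, with matching signs, into generators of $D^\uparrow_{n-1}(X)_Y$ --- exactly your internal-matching mechanism. Your explicit handling of the external Leibniz terms, including the compatibility $(\langle\boldsymbol{a}\rangle_i^{-1})*b=\langle\boldsymbol{a}*b\rangle_i^{-1}$ via $a^{-1}*b=(a*b)^{-1}$ and the $X$-set axioms, is left implicit in the paper but is a correct completion of the same argument rather than a different one.
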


\begin{proof}
We have $\partial_n(D^\uparrow_n(X)_Y)\subset D^\uparrow_{n-1}(X)_Y$, since
\begin{align*}
&\partial(\langle a_1,\ldots,a_m\rangle+*a_1\langle a_1^{-1},a_1a_2,a_3,\ldots,a_m\rangle) \\
%&=*a_1\langle a_2,\ldots,a_m\rangle+\langle a_1a_2,a_3,\ldots,a_m\rangle \\
%&\hspace{5mm}-\langle a_1a_2,a_3,\ldots,a_m\rangle-*a_1\langle a_2,\ldots,a_m\rangle \\
%&\hspace{5mm}+\langle a_1,a_2a_3,a_4,\ldots,a_m\rangle+*a_1\langle a_1^{-1},a_1a_2a_3,a_4,\ldots,a_m\rangle \\
%&\hspace{5mm}-\langle a_1,a_2,a_3a_4,a_5,\ldots,a_m\rangle-*a_1\langle a_1^{-1},a_1a_2,a_3a_4,a_5,\ldots,a_m\rangle \\
%&\hspace{5mm}+\cdots \\
%&\hspace{5mm}+(-1)^{m-1}(\langle a_1,\ldots,a_{m-1}a_m\rangle+*a_1\langle a_1^{-1},a_1a_2,a_3,\ldots,a_{m-1}a_m\rangle) \\
%&\hspace{5mm}+(-1)^m(\langle a_1,\ldots,a_{m-1}\rangle+*a_1\langle a_1^{-1},a_1a_2,a_3,\ldots,a_{m-1}\rangle) \\
&=\langle a_1,a_2a_3,a_4,\ldots,a_m\rangle+*a_1\langle a_1^{-1},a_1a_2a_3,a_4,\ldots,a_m\rangle \\
&\hspace{5mm}+\sum_{i=3}^{m-1}(-1)^i
(\langle a_1,\ldots,a_ia_{i+1},a_{i+2},\ldots,a_m\rangle \\
&\hspace{20mm}+*a_1\langle a_1^{-1},a_1a_2,a_2,\ldots,a_ia_{i+1},a_{i+2},\ldots,a_m\rangle) \\
&\hspace{5mm}+(-1)^m(\langle a_1,\ldots,a_{m-1}\rangle
+*a_1\langle a_1^{-1},a_1a_2,a_3,\ldots,a_{m-1}\rangle)
\end{align*}
and
\begin{align*}
&\partial(\langle a_1,\ldots,a_m\rangle
+\langle a_1,\ldots,a_{i-1}a_i,a_i^{-1},a_ia_{i+1},a_{i+2},\ldots,a_m\rangle) \\
%&=*a_1\langle a_2,\ldots,a_m\rangle+*a_1\langle a_2,\ldots,a_{i-1}a_i,a_i^{-1},a_ia_{i+1},a_{i+2},\ldots,a_m\rangle \\
%&\hspace{5mm}-\langle a_1a_2,a_3,\ldots,a_m\rangle-\langle a_1a_2,a_3,\ldots,a_{i-1}a_i,a_i^{-1},a_ia_{i+1},a_{i+2},\ldots,a_m\rangle \\
%&\hspace{5mm}+\cdots \\
%&\hspace{5mm}+(-1)^{(i-2)}(\langle a_1,\ldots,a_{i-2}a_{i-1},a_i,\ldots,a_m\rangle+\langle a_1,\ldots,a_{i-2}a_{i-1}a_i,a_i^{-1},a_ia_{i+1},a_{i+2},\ldots,a_m\rangle) \\
%&\hspace{5mm}+(-1)^{(i-1)}(\langle a_1,\ldots,a_{i-1}a_i,a_{i+1},\ldots,a_m\rangle+\langle a_1,\ldots,a_ia_{i+1},a_{i+2},\ldots,a_m\rangle) \\
%&\hspace{5mm}+(-1)^i(\langle a_1,\ldots,a_ia_{i+1},a_{i+2},\ldots,a_m\rangle+\langle a_1,\ldots,a_{i-1}a_i,a_{i+1},\ldots,a_m\rangle) \\
%&\hspace{5mm}+(-1)^{i+1}(\langle a_1,\ldots,a_{i+1}a_{i+2},a_{i+3},\ldots,a_m\rangle+\langle a_1,\ldots,a_{i-1}a_i,a_i^{-1},a_ia_{i+1}a_{i+2},a_{i+3},\ldots,a_m\rangle) \\
%&\hspace{5mm}+\cdots \\
%&\hspace{5mm}+(-1)^{m-1}(\langle a_1,\ldots,a_{m-1}a_m\rangle+\langle a_1,\ldots,a_{i-2},a_{i-1}a_i,a_i^{-1},a_ia_{i+1},a_{i+2},\ldots,a_{m-1}a_m\rangle) \\
%&\hspace{5mm}+(-1)^m(\langle a_1,\ldots,a_{m-1}\rangle+\langle a_1,\ldots,a_{i-2},a_{i-1}a_i,a_i^{-1},a_ia_{i+1},a_{i+2},\ldots,a_{m-1}\rangle) \\
&=*a_1\langle a_2,\ldots,a_m\rangle
+*a_1\langle a_2,\ldots,a_{i-1}a_i,a_i^{-1},a_ia_{i+1},a_{i+2},\ldots,a_m\rangle \\
&\hspace{5mm}+\sum_{j=1}^{i-2}(-1)^j(\langle a_1,\ldots,a_ja_{j+1},a_{j+2},\ldots,a_m\rangle \\
&\hspace{20mm}+\langle a_1,\ldots,a_ja_{j+1},a_{j+2},\ldots,a_{i-1}a_i,a_i^{-1},a_ia_{i+1},a_{i+2},\ldots,a_m\rangle) \\
&\hspace{5mm}+\sum_{j=i+1}^{m-1}(-1)^j(\langle a_1,\ldots,a_ja_{j+1},a_{j+2},\ldots,a_m\rangle \\
&\hspace{20mm}+\langle a_1,\ldots,a_{i-1}a_i,a_i^{-1},a_ia_{i+1},a_{i+2},\ldots,a_ja_{j+1},\ldots,a_m\rangle) \\
&\hspace{5mm}+(-1)^m(\langle a_1,\ldots,a_{m-1}\rangle
+\langle a_1,\ldots,a_{i-1}a_i,a_i^{-1},a_ia_{i+1},a_{i+2},\ldots,a_{m-1}\rangle).
\end{align*}
Thus $D^\uparrow_*(X)_Y$ is a subcomplex of $P_*(X)_Y$.
\end{proof}

\begin{definition}
We set $C^\uparrow_n(X)_Y=P_n(X)_Y/(D_n(X)_Y+D^\uparrow_n(X)_Y)$.
The quotient complex $(C^\uparrow_n(X)_Y,\partial_n)$ is denoted by $C^\uparrow_*(X)_Y.$
For an abelian group $A$, we define the cochain complex 
$C^*_\uparrow(X;A)_Y=\operatorname{Hom}(C^\uparrow_*(X)_Y,A)$.
We denote by $H^\uparrow_n(X)_Y$ the $n$th homology group of $C^\uparrow_*(X)_Y$. 
\end{definition}

For an $(X,\uparrow)_Y$-coloring $C$ of a diagram $D$ for a handlebody-link, 
we define the \textit{local chain}s $w(\xi;C)$ at each crossing $\xi$ and 
each vertex $\xi$ of $D$ as depicted in Figure~\ref{fig:weights}.
The local chain is well-defined, since
\begin{align*}
&-\langle y*a\rangle\langle a^{-1}\rangle\langle b\rangle
=\langle y\rangle\langle a\rangle\langle b\rangle
=-\langle y*b\rangle\langle a*b\rangle\langle b^{-1}\rangle, \\
&-\langle y*a\rangle\langle a^{-1},ab\rangle
=\langle y\rangle\langle a,b\rangle
=-\langle y\rangle\langle ab,b^{-1}\rangle
\end{align*}
in $C^\uparrow_2(X)_Y$ 
(see Figure~\ref{fig:well-defined weights}).
Then we can define the chain $W(D;C)\in C^\uparrow_2(X)_Y$ in the same way as $W(D;C)\in C_2(X)_Y$, and obtain invariants $\mathcal{H}(H)$, $\Phi_\theta(H)$ for an (unoriented) handlebody-link $H$.

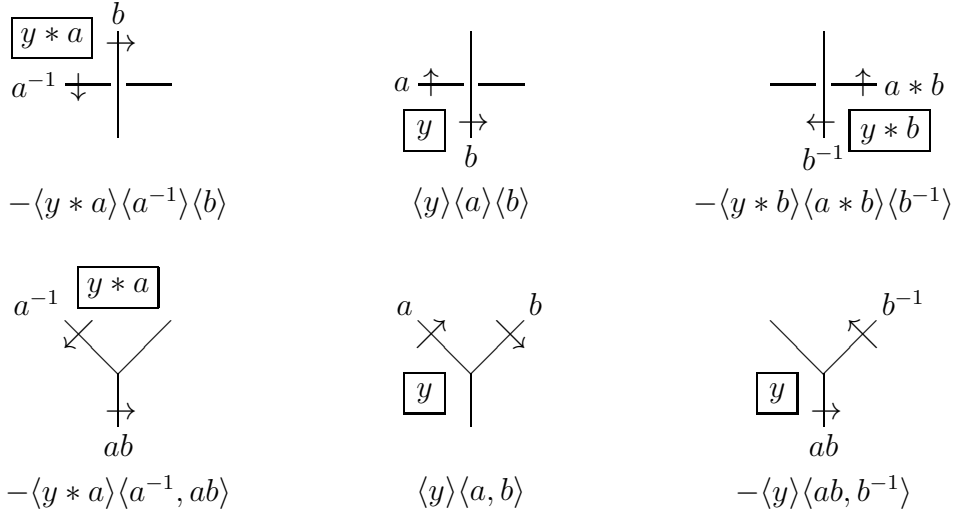
\begin{figure}[htb]
\mbox{}\hfill
\begin{picture}(60,80)
 \put(10,50){\line(1,0){17}}
 \put(50,50){\line(-1,0){17}}
 \put(30,30){\line(0,1){40}}
 \put(15,49){\makebox(0,0){$\downarrow$}}
 \put(31,65){\makebox(0,0){$\rightarrow$}}
 \put(7,50){\makebox(0,0)[r]{$a^{-1}$}}
 \put(30,73){\makebox(0,0)[b]{$b$}}
 \put(-10,60){\framebox(30,15){$y*a$}}
 \put(30,5){\makebox(0,0){$-\langle y*a\rangle\langle a^{-1}\rangle\langle b\rangle$}}
\end{picture}
\hfill
\begin{picture}(60,80)
 \put(10,50){\line(1,0){17}}
 \put(50,50){\line(-1,0){17}}
 \put(30,30){\line(0,1){40}}
 \put(15,51){\makebox(0,0){$\uparrow$}}
 \put(31,35){\makebox(0,0){$\rightarrow$}}
 \put(7,50){\makebox(0,0)[r]{$a$}}
 \put(30,27){\makebox(0,0)[t]{$b$}}
 \put(5,25){\framebox(15,15){$y$}}
 \put(30,5){\makebox(0,0){$\langle y\rangle\langle a\rangle\langle b\rangle$}}
\end{picture}
\hfill
\begin{picture}(60,80)
 \put(10,50){\line(1,0){17}}
 \put(50,50){\line(-1,0){17}}
 \put(30,30){\line(0,1){40}}
 \put(45,51){\makebox(0,0){$\uparrow$}}
 \put(29,35){\makebox(0,0){$\leftarrow$}}
 \put(53,50){\makebox(0,0)[l]{$a*b$}}
 \put(30,27){\makebox(0,0)[t]{$b^{-1}$}}
 \put(40,25){\framebox(30,15){$y*b$}}
 \put(30,5){\makebox(0,0){$-\langle y*b\rangle\langle a*b\rangle\langle b^{-1}\rangle$}}
\end{picture}
\hfill\mbox{}\vspace{10mm}\\
\mbox{}\hfill
\begin{picture}(60,80)
 \put(10,70){\line(1,-1){20}}
 \put(50,70){\line(-1,-1){20}}
 \put(30,30){\line(0,1){20}}
 \put(15,65){\makebox(0,0){$\swarrow$}}
 \put(31,35){\makebox(0,0){$\rightarrow$}}
 \put(8,72){\makebox(0,0)[br]{$a^{-1}$}}
 \put(30,27){\makebox(0,0)[t]{$ab$}}
 \put(15,75){\framebox(30,15){$y*a$}}
 \put(30,5){\makebox(0,0){$-\langle y*a\rangle\langle a^{-1},ab\rangle$}}
\end{picture}
\hfill
\begin{picture}(60,80)
 \put(10,70){\line(1,-1){20}}
 \put(50,70){\line(-1,-1){20}}
 \put(30,30){\line(0,1){20}}
 \put(15,65){\makebox(0,0){$\nearrow$}}
 \put(45,65){\makebox(0,0){$\searrow$}}
 \put(8,72){\makebox(0,0)[br]{$a$}}
 \put(52,72){\makebox(0,0)[bl]{$b$}}
 \put(5,35){\framebox(15,15){$y$}}
 \put(30,5){\makebox(0,0){$\langle y\rangle\langle a,b\rangle$}}
\end{picture}
\hfill
\begin{picture}(60,80)
 \put(10,70){\line(1,-1){20}}
 \put(50,70){\line(-1,-1){20}}
 \put(30,30){\line(0,1){20}}
 \put(45,65){\makebox(0,0){$\nwarrow$}}
 \put(31,35){\makebox(0,0){$\rightarrow$}}
 \put(52,72){\makebox(0,0)[bl]{$b^{-1}$}}
 \put(30,27){\makebox(0,0)[t]{$ab$}}
 \put(5,35){\framebox(15,15){$y$}}
 \put(30,5){\makebox(0,0){$-\langle y\rangle\langle ab,b^{-1}\rangle$}}
\end{picture}
\hfill\mbox{}
\caption{Well-definedness of local chains for unoriented handlebody-links} 
\label{fig:well-defined weights}
\end{figure}

%%%%%%%%%%%%%%%%%%%%%%%%%%%%%%%%%%%%%%%
%%%%%%%%%%%%%%%%%%%%%%%%%%%%%%%%%%%%%%%
\section{Simplicial decomposition}
\label{sec:decomp}

The goal of this section is to prove Lemma~\ref{lem:subcomplexM}
stating that $D_*(X)_Y$ is a subcomplex. 
The formula of $D_2(X)_Y$, when $\langle y\rangle$ is omitted, is written as
\[ \langle a\rangle\langle b\rangle-\langle a,b\rangle+\langle b,a*b\rangle, \]
and its geometric interpretation is depicted in Figure~\ref{dual}.
In (A), a colored triangle representing $\langle a,b\rangle$ is depicted, as well as its dual graph with a trivalent vertex.
The colorings of such a graph were discussed in Section~\ref{sec:coloring}.
A colored square representing $\langle a\rangle\langle b\rangle$ is depicted in (B), with the dual graph that corresponds to a crossing.
In (C), a triangulation of the square is depicted, and after triangulation it represents
$\langle a,b\rangle-\langle b,a*b\rangle$.
Thus the triangulation corresponds to the above formula.
This decomposition is found in \cite{CJKLS}.

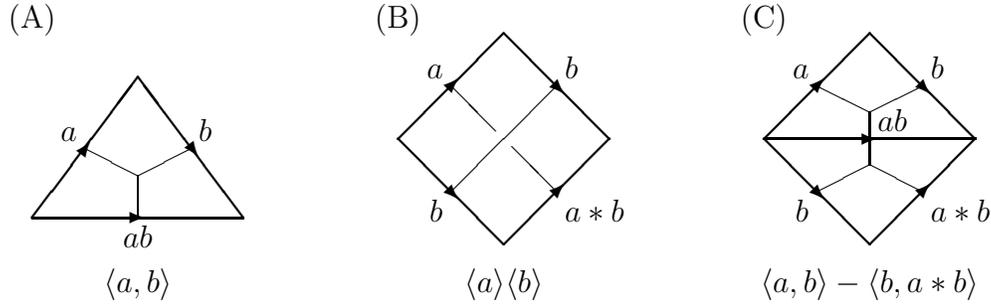
\begin{figure}[htb]
\mbox{}\hfill
\begin{picture}(80,110)
 \put(40,30){\line(0,1){16}}
 \put(40,46){\line(-2,1){20}}
 \put(40,46){\line(2,1){20}}
\thicklines
 \put(0,30){\line(3,4){40}}
 \put(0,30){\line(1,0){80}}
 \put(80,30){\line(-3,4){40}}
 \put(22,59){\vector(3,4){0}}
 \put(63,53){\vector(3,-4){0}}
 \put(43,30){\vector(1,0){0}}
 \put(17,59){\makebox(0,0)[br]{$a$}}
 \put(63,59){\makebox(0,0)[bl]{$b$}}
 \put(40,27){\makebox(0,0)[t]{$ab$}}
 \put(0,105){\makebox(0,0){(A)}}
 \put(40,5){\makebox(0,0){$\langle a,b\rangle$}}
\end{picture}
\hfill
\begin{picture}(80,110)
 \put(20,40){\line(1,1){40}}
 \put(60,40){\line(-1,1){17}}
 \put(20,80){\line(1,-1){17}}
\thicklines
 \put(40,20){\line(-1,1){40}}
 \put(40,20){\line(1,1){40}}
 \put(40,100){\line(-1,-1){40}}
 \put(40,100){\line(1,-1){40}}
 \put(23,37){\vector(1,-1){0}}
 \put(23,83){\vector(1,1){0}}
 \put(63,43){\vector(1,1){0}}
 \put(63,77){\vector(1,-1){0}}
 \put(17,83){\makebox(0,0)[br]{$a$}}
 \put(63,83){\makebox(0,0)[bl]{$b$}}
 \put(17,37){\makebox(0,0)[tr]{$b$}}
 \put(63,37){\makebox(0,0)[tl]{$a*b$}}
 \put(0,105){\makebox(0,0){(B)}}
 \put(40,5){\makebox(0,0){$\langle a\rangle\langle b\rangle$}}
\end{picture}
\hfill
\begin{picture}(80,110)
 \put(40,70){\line(-2,1){20}}
 \put(40,70){\line(2,1){20}}
 \put(40,50){\line(0,1){20}}
 \put(40,50){\line(-2,-1){20}}
 \put(40,50){\line(2,-1){20}}
\thicklines
 \put(40,20){\line(-1,1){40}}
 \put(40,20){\line(1,1){40}}
 \put(0,60){\line(1,0){80}}
 \put(40,100){\line(-1,-1){40}}
 \put(40,100){\line(1,-1){40}}
 \put(23,37){\vector(1,-1){0}}
 \put(23,83){\vector(1,1){0}}
 \put(43,60){\vector(1,0){0}}
 \put(63,43){\vector(1,1){0}}
 \put(63,77){\vector(1,-1){0}}
 \put(17,83){\makebox(0,0)[br]{$a$}}
 \put(63,83){\makebox(0,0)[bl]{$b$}}
 \put(17,37){\makebox(0,0)[tr]{$b$}}
 \put(63,37){\makebox(0,0)[tl]{$a*b$}}
 \put(43,63){\makebox(0,0)[bl]{$ab$}}
 \put(0,105){\makebox(0,0){(C)}}
 \put(40,5){\makebox(0,0){$\langle a,b\rangle-\langle b,a*b\rangle$}}
\end{picture}
\hfill\mbox{}
\caption{Dividing a square into triangles}
\label{dual}
\end{figure}

%%%%%%%%%%%%%%%%%%%%%%%%%%%%%%%%%%%%%%%%%%%%%%%
%%%%%%%%%%%%%%%%%%%%%%%%%%%%%%%%%%%%%%%%%%%%%%%

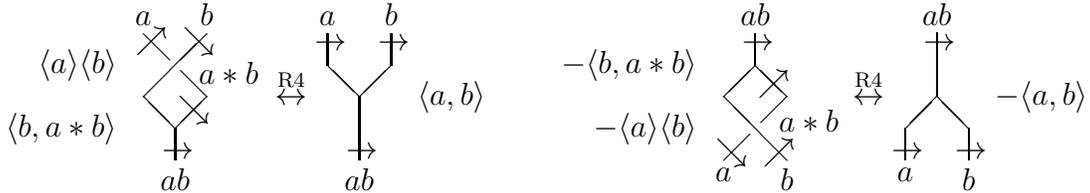
\begin{figure}[htb]
\mbox{}\hfill
\begin{minipage}{92pt}
\begin{picture}(92,68)
 \put(50,34){\line(1,1){24}}
 \put(74,34){\line(-1,1){10}}
 \put(60,48){\line(-1,1){10}}
 \put(62,22){\line(-1,1){12}}
 \put(62,22){\line(1,1){12}}
 \put(62,10){\line(0,1){12}}
 \put(53,55){\makebox(0,0){$\nearrow$}}
 \put(71,55){\makebox(0,0){$\searrow$}}
 \put(69,29){\makebox(0,0){$\searrow$}}
 \put(63,13){\makebox(0,0){$\rightarrow$}}
 \put(50,61){\makebox(0,0)[b]{$a$}}
 \put(74,61){\makebox(0,0)[b]{$b$}}
 \put(71,39){\makebox(0,0)[bl]{$a*b$}}
 \put(62,7){\makebox(0,0)[t]{$ab$}}
 \put(40,46){\makebox(0,0)[r]{$\langle a\rangle\langle b\rangle$}}
 \put(40,22){\makebox(0,0)[r]{$\langle b,a*b\rangle$}}
\end{picture}
\end{minipage}
~$\overset{\text{R4}}{\leftrightarrow}$~
\begin{minipage}{56pt}
\begin{picture}(56,68)
 \put(0,46){\line(0,1){12}}
 \put(24,46){\line(0,1){12}}
 \put(12,34){\line(-1,1){12}}
 \put(12,34){\line(1,1){12}}
 \put(12,10){\line(0,1){24}}
 \put(1,54){\makebox(0,0){$\rightarrow$}}
 \put(25,54){\makebox(0,0){$\rightarrow$}}
 \put(13,13){\makebox(0,0){$\rightarrow$}}
 \put(0,61){\makebox(0,0)[b]{$a$}}
 \put(24,61){\makebox(0,0)[b]{$b$}}
 \put(12,7){\makebox(0,0)[t]{$ab$}}
 \put(34,34){\makebox(0,0)[l]{$\langle a,b\rangle$}}
\end{picture}
\end{minipage}
\hfill
\begin{minipage}{99pt}
\begin{picture}(99,68)
 \put(70,46){\line(0,1){12}}
 \put(82,34){\line(-1,1){12}}
 \put(58,34){\line(1,1){12}}
 \put(58,10){\line(1,1){10}}
 \put(72,24){\line(1,1){10}}
 \put(82,10){\line(-1,1){24}}
 \put(71,54){\makebox(0,0){$\rightarrow$}}
 \put(77,39){\makebox(0,0){$\nearrow$}}
 \put(79,13){\makebox(0,0){$\nearrow$}}
 \put(61,13){\makebox(0,0){$\searrow$}}
 \put(70,61){\makebox(0,0)[b]{$ab$}}
 \put(79,29){\makebox(0,0)[tl]{$a*b$}}
 \put(82,7){\makebox(0,0)[t]{$b$}}
 \put(58,7){\makebox(0,0)[t]{$a$}}
 \put(48,22){\makebox(0,0)[r]{$-\langle a\rangle\langle b\rangle$}}
 \put(48,46){\makebox(0,0)[r]{$-\langle b,a*b\rangle$}}
\end{picture}
\end{minipage}
~$\overset{\text{R4}}{\leftrightarrow}$~
\begin{minipage}{64pt}
\begin{picture}(64,68)
 \put(12,34){\line(0,1){24}}
 \put(24,22){\line(-1,1){12}}
 \put(0,22){\line(1,1){12}}
 \put(24,10){\line(0,1){12}}
 \put(0,10){\line(0,1){12}}
 \put(13,54){\makebox(0,0){$\rightarrow$}}
 \put(25,13){\makebox(0,0){$\rightarrow$}}
 \put(1,13){\makebox(0,0){$\rightarrow$}}
 \put(12,61){\makebox(0,0)[b]{$ab$}}
 \put(24,7){\makebox(0,0)[t]{$b$}}
 \put(0,7){\makebox(0,0)[t]{$a$}}
 \put(34,34){\makebox(0,0)[l]{$-\langle a,b\rangle$}}
\end{picture}
\end{minipage}
\hfill\mbox{}
\caption{Colors for Y-oriented R4 moves}
\label{crossfinger}
\end{figure}

On the other hand, this equation corresponds to Y-oriented R4 moves in Figure~\ref{fig:Rmove} as follows.
In Figure~\ref{crossfinger}, colored diagrams of Y-oriented R4 moves are depicted.
In the left diagram, the left-hand side represents the chain
$\langle a\rangle\langle b\rangle+\langle b,a*b\rangle$
and the right-hand side represents $\langle a,b\rangle$, respectively.
In the right diagram, the left-hand side represents the chain
$-\langle a\rangle\langle b\rangle-\langle b,a*b\rangle$
and the right-hand side represents $-\langle a,b\rangle$, respectively.
Thus the above equality is needed for colored diagrams to define equivalent chains in the quotient complex.

\begin{figure}[htb]
\begin{center}
\includegraphics[height=1.2in]{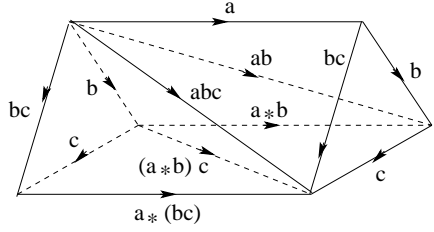}
\end{center}
\caption{Decomposition of a prism into tetrahedra}
\label{prism}
\end{figure}

A geometric interpretation of the last expression of $D_3(X)_Y$ omitting $\langle y\rangle$,
\[ \langle a\rangle\langle b,c\rangle-\langle a,b,c\rangle
+\langle b,a*b,c\rangle-\langle b,c,a*(bc)\rangle \]
is found in Figure~\ref{prism}.
The symbol $\langle a\rangle$ is represented by the horizontal $1$-simplex, $\langle b,c\rangle$ is represented by the right triangular face, and $\langle a\rangle\langle b,c\rangle$ is represented by a prism.
The term $\langle a,b,c\rangle$ corresponds to the right top tetrahedron in the prism.
The expressions of the form
$\langle\langle a\rangle\langle b,c\rangle\rangle_i$
provides a triangulation of a product of simplices.
Each term corresponds to
\begin{align*}
&\langle a,b,c\rangle=\langle\langle a\rangle\langle b,c\rangle\rangle_1,
&&\langle b,a*b,c\rangle=-\langle\langle a\rangle\langle b,c\rangle\rangle_2,
&&\langle b,c,a*(bc)\rangle=\langle\langle a\rangle\langle b,c\rangle\rangle_3.
\end{align*}

Below we use the notation
\begin{align*}
&\partial_{(0)}\langle x_1,\ldots,x_m\rangle
=*x_1\langle x_2,\ldots,x_m\rangle, \\
&\partial_{(i)}\langle x_1,\ldots,x_m\rangle
=(-1)^i\langle x_1,\ldots,x_ix_{i+1},\ldots,x_m\rangle, \\
&\partial_{(m)}\langle x_1,\ldots,x_m\rangle
=(-1)^m\langle x_1,\ldots,x_{m-1}\rangle.
\end{align*}
Then the boundaries of
$\langle\langle a\rangle\langle b,c\rangle\rangle_i$
are computed as follows.
\[ \langle\langle a\rangle\langle b,c\rangle\rangle_i
\stackrel{\partial}{\mapsto}
\partial_{(0)}\langle\langle a\rangle\langle b,c\rangle\rangle_i
+\partial_{(1)}\langle\langle a\rangle\langle b,c\rangle\rangle_i
+\partial_{(2)}\langle\langle a\rangle\langle b,c\rangle\rangle_i
+\partial_{(3)}\langle\langle a\rangle\langle b,c\rangle\rangle_i \]
and the right-hand sides for $i=1,2,3$ are computed as follows.
\[ \begin{array}{rlll}
\langle\langle a\rangle\langle b,c\rangle\rangle_1\stackrel{\partial}{\mapsto}
&*a\langle b,c\rangle & -\langle ab,c\rangle \\
&+\langle a,bc\rangle & -\langle a,b\rangle \\
=&\langle(\partial_{(0)}\langle a\rangle)\langle b,c\rangle\rangle_1
&+\partial_{(1)}\langle\langle a\rangle\langle b,c\rangle\rangle_1 \\
&-\langle \langle a \rangle \partial_{(1)} \langle b,c \rangle \rangle_1
&-\langle \langle a \rangle \partial_{(2)} \langle b,c \rangle \rangle_1,
\\
\langle\langle a\rangle\langle b,c\rangle\rangle_2\stackrel{\partial}{\mapsto}
&-*b\langle a*b,c\rangle & +\langle b(a*b),c\rangle \\
&-\langle b,(a*b)c\rangle & +\langle b,a*b\rangle \\
=&-\langle\langle a\rangle\partial_{(0)}\langle b,c\rangle\rangle_1
&-\partial_{(1)}\langle\langle a\rangle\langle b,c\rangle\rangle_1 \\
&-\partial_{(2)}\langle\langle a\rangle\langle b,c\rangle\rangle_3
&-\langle\langle a\rangle\partial_{(2)}\langle b,c\rangle\rangle_2,
\\
\langle\langle a\rangle\langle b,c\rangle\rangle_3\stackrel{\partial}{\mapsto}
&*b\langle c,a*(bc)\rangle & -\langle bc,a*(bc)\rangle \\
&+\langle b,c(a*(bc))\rangle & -\langle b,c\rangle \\
=&-\langle\langle a\rangle\partial_{(0)}\langle b,c\rangle\rangle_2
&-\langle\langle a\rangle\partial_{(1)}\langle b,c\rangle\rangle_2 \\
&+\partial_{(2)}\langle\langle a\rangle\langle b,c\rangle\rangle_3
&+\langle(\partial_{(1)}\langle a\rangle)\langle b,c\rangle\rangle_1,
\end{array} \]
\begin{sloppypar}
\noindent
where $\langle(\partial_{(i)}\langle a\rangle)\langle b,c\rangle\rangle_1$
is regarded as
$(\partial_{(i)}\langle a\rangle)\langle b,c\rangle$.
The canceling terms of the form
$\partial_{(i)}\langle\langle a\rangle\langle b,c\rangle\rangle_j$
in the above boundaries correspond to internal triangles in Figure~\ref{prism}, that are shared by a pair of tetrahedra.
Other terms are of the form
$\langle \partial_{(i)} \langle a \rangle \langle b,c \rangle \rangle_{j}$ or
$\langle \langle a \rangle \partial_{(i)} \langle b,c \rangle \rangle_{j}$,
and they are outer triangles, that constitute the boundary of the prism.
The expression $\langle\partial_{(i)}\langle a\rangle\langle b,c\rangle\rangle_{j}$ represents the two triangles on the right and the left in Figure~\ref{prism}, since this represents
\end{sloppypar}
\vspace{-15pt} 
\begin{align*}
&(\text{boundary of the interval represented by $\langle a\rangle$}) \\
&\hspace{20mm}\times(\text{the triangle represented by $\langle b,c\rangle$}).
\end{align*}
Thus the outer boundary follows the pattern of Leibniz rule.

In terms of the coloring invariant of graphs, as in the case of the preceding relation for Y-oriented R4 move, this relation corresponds to an equivalence of colored $2$-complexes called foams, that is a higher dimensional analog of the move depicted in Figure~\ref{crossfinger}.
See \cite{CI} for more on colored foams.

\begin{lemma}
\label{lem:decomp}
For $\langle\boldsymbol{a}\rangle=\langle a_1,\ldots,a_s\rangle$ and
$\langle\boldsymbol{b}\rangle=\langle b_1,\ldots,b_t\rangle$
where $a_i,b_j\in G_\lambda$, we have
\[ \partial\langle\langle\boldsymbol{a}\rangle
\langle\boldsymbol{b}\rangle\rangle
=\langle(\partial\langle\boldsymbol{a}\rangle)
\langle\boldsymbol{b}\rangle\rangle
+(-1)^{|\boldsymbol{a}|}\langle\langle\boldsymbol{a}\rangle
(\partial\langle\boldsymbol{b}\rangle)\rangle, \]
where $\langle \langle\cdot\rangle \langle\cdot\rangle \rangle$ is linearly extended.
\end{lemma}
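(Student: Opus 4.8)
The plan is to prove the identity by generalizing the explicit $s=1$, $t=2$ computation carried out above, treating $\langle\langle\boldsymbol{a}\rangle\langle\boldsymbol{b}\rangle\rangle$ as a (twisted) shuffle product and showing that $\partial$ distributes over it by the graded Leibniz rule. The first observation is that, since all $a_i$ and $b_j$ lie in a single component $G_\lambda$, every entry $c_i=a_k*(b_1\cdots b_{i-k})$ is a conjugate $(b_1\cdots b_{i-k})^{-1}a_k(b_1\cdots b_{i-k})$ and hence again lies in $G_\lambda$. Consequently all the algebraic manipulations below take place inside the single group $G_\lambda$ with its conjugation quandle operation, where the two identities I will use, namely $b(a*b)=ab$ (from MCQ axiom (1)) and $(x*a)*b=x*(ab)$ (MCQ axiom (2)), hold automatically as facts about group conjugation.

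First I would expand $\partial\langle\langle\boldsymbol{a}\rangle\langle\boldsymbol{b}\rangle\rangle=\sum_I\varepsilon_I\,\partial\langle c^I_1,\ldots,c^I_{s+t}\rangle$, where $I=\{i_1<\cdots<i_s\}$ runs over the $\binom{s+t}{s}$ shuffles and $\varepsilon_I=(-1)^{\sum_k(i_k-k)}$, and then split each $\partial\langle c^I_1,\ldots,c^I_{s+t}\rangle$ into its individual faces $\partial_{(p)}$. The key step is to classify each face $\partial_{(p)}$ of each shuffle $I$ according to the positions it touches: (i) faces acting purely on $a$-positions (the $*a_1$ front face when $i_1=1$, the deletion of the last $a$ when $i_s=s+t$, and merges $c_{i_k}c_{i_k+1}$ with $i_{k+1}=i_k+1$); (ii) faces acting purely on $b$-positions; and (iii) the ``mixed'' faces that merge an $a$-entry with an adjacent $b$-entry. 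I would then verify the three resulting claims separately.

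The heart of the argument is the cancellation of the mixed faces. Given a shuffle $I$ with an $a$-entry at a position $p=i_k$ immediately followed by a $b$-entry at position $p+1$, its face $\partial_{(p)}$ merges $c_{i_k}c_{p+1}=(a_k*u)\,b$, where $u=b_1\cdots b_{i_k-k}$. I would pair this with the face $\partial_{(p)}$ of the shuffle $I'$ obtained from $I$ by replacing $i_k$ with $i_k+1$ (sliding that $a$-entry one step to the right past the $b$-entry), in which position $p$ now carries the $b$-entry and position $p+1$ the $a$-entry $a_k*(ub)$, so that this face merges $b\,(a_k*(ub))$. Since $b\,(a_k*(ub))=b\big((a_k*u)*b\big)=(a_k*u)\,b$, the merged entries agree, every other entry of the two shuffles agrees, and $\varepsilon_{I'}=-\varepsilon_I$, so the two contributions cancel. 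This matches $a$-then-$b$ mixed faces bijectively with $b$-then-$a$ mixed faces and removes all of them; it is exactly the phenomenon in the worked example, where $+\langle b(a*b),c\rangle$ cancels against $-\langle ab,c\rangle$.

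Finally I would show that the surviving faces assemble into the two Leibniz terms: the pure-$a$ faces, summed over all shuffles, reproduce $\langle(\partial\langle\boldsymbol{a}\rangle)\langle\boldsymbol{b}\rangle\rangle$ with no extra sign (the boundary acts on the left factor), while the pure-$b$ faces reproduce $\langle\langle\boldsymbol{a}\rangle(\partial\langle\boldsymbol{b}\rangle)\rangle$ with the global factor $(-1)^{|\boldsymbol{a}|}$ coming from commuting $\partial$ past the $s$ left-hand $a$-entries, exactly as in the graded Leibniz rule $\partial(\sigma\tau)=(\partial\sigma)\tau+(-1)^{|\sigma|}\sigma(\partial\tau)$ proved earlier. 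I expect the main obstacle to be purely the sign bookkeeping: one must check that the prefactors $\varepsilon_I$, the face signs $(-1)^p$, and the index shifts combine to give precisely $+1$ on the $a$-faces and $(-1)^{|\boldsymbol{a}|}$ on the $b$-faces, and that the leftward-acting $*$-operators are carried consistently. Organizing the computation as an induction on $s+t$, peeling off the first entry of the shuffle so that $\langle\langle\boldsymbol{a}\rangle\langle\boldsymbol{b}\rangle\rangle$ splits into an $a_1$-led part and a $(-1)^{s}b_1$-led part, is a convenient way to make this bookkeeping systematic.
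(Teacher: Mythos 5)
Your proposal is correct and follows essentially the same route as the paper: expand $\partial\langle\langle\boldsymbol{a}\rangle\langle\boldsymbol{b}\rangle\rangle$ over shuffles and faces, classify each face $\partial_{(i)}$ as pure-$a$, pure-$b$, or mixed, cancel the mixed faces in pairs by sliding an $a$-entry past an adjacent $b$-entry (your identity $b\,(a_k*(ub))=(a_k*u)\,b$ with $\varepsilon_{I'}=-\varepsilon_I$ is exactly the cancellation the paper invokes for its two middle cases), and assemble the survivors into $\langle(\partial\langle\boldsymbol{a}\rangle)\langle\boldsymbol{b}\rangle\rangle+(-1)^{s}\langle\langle\boldsymbol{a}\rangle(\partial\langle\boldsymbol{b}\rangle)\rangle$. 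The paper simply records the same case analysis as explicit reindexing formulas, so your plan, including the sign bookkeeping you flag, matches its proof.
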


\begin{proof}
By definition, we have
\[ \partial
\langle\langle\boldsymbol{a}\rangle\langle\boldsymbol{b}\rangle\rangle
=\sum_{i=0}^{s+t}\partial_{(i)}
\langle\langle\boldsymbol{a}\rangle\langle\boldsymbol{b}\rangle\rangle
=\sum_{i=0}^{s+t}\sum_{1\leq i_1<\cdots<i_s\leq s+t}\partial_{(i)}
\langle\langle\boldsymbol{a}\rangle\langle\boldsymbol{b}\rangle\rangle_{i_1,\ldots,i_s}. \]
Direct computations show that
\begin{align*}
&\partial_{(0)}
\langle\langle\boldsymbol{a}\rangle\langle\boldsymbol{b}\rangle\rangle_{i_1,\ldots,i_s} \\
&=\begin{cases}
\langle(\partial_{(0)}
\langle\boldsymbol{a}\rangle)\langle\boldsymbol{b}\rangle
\rangle_{i_2-1,\ldots,i_s-1} & (i_1=1), \\
(-1)^s\langle
\langle\boldsymbol{a}\rangle(\partial_{(0)}\langle\boldsymbol{b}\rangle)
\rangle_{i_1-1,\ldots,i_s-1} & (i_1>1),
\end{cases} \\
&\partial_{(i)}\langle
\langle\boldsymbol{a}\rangle\langle\boldsymbol{b}\rangle
\rangle_{i_1,\ldots,i_s} \\
&=\begin{cases}
\langle(\partial_{(k)}
\langle\boldsymbol{a}\rangle)\langle\boldsymbol{b}\rangle
\rangle_{i_1,\ldots,i_k,i_{k+2}-1,\ldots,i_s-1} & (i_k=i<i+1=i_{k+1}), \\
-\partial_{(i)}\langle
\langle\boldsymbol{a}\rangle\langle\boldsymbol{b}\rangle
\rangle_{i_1,\ldots,i_{k-1},i_k+1,i_{k+1},\ldots,i_s} & (i_k=i<i+1<i_{k+1}), \\
-\partial_{(i)}\langle
\langle\boldsymbol{a}\rangle\langle\boldsymbol{b}\rangle
\rangle_{i_1,\ldots,i_k,i_{k+1}-1,i_{k+2},\ldots,i_s} & (i_k<i<i+1=i_{k+1}), \\
(-1)^s\langle
\langle\boldsymbol{a}\rangle(\partial_{(i-k)}\langle\boldsymbol{b}\rangle)
\rangle_{i_1,\ldots,i_k,i_{k+1}-1,\ldots,i_s-1} & (i_k<i<i+1<i_{k+1}),
\end{cases} \\
&\partial_{(s+t)}\langle
\langle\boldsymbol{a}\rangle\langle\boldsymbol{b}\rangle
\rangle_{i_1,\ldots,i_s} \\
&=\begin{cases}
\langle(\partial_{(s)}
\langle\boldsymbol{a}\rangle)\langle\boldsymbol{b}\rangle
\rangle_{i_1,\ldots,i_{s-1}} & (i_s=s+t), \\
(-1)^s\langle
\langle\boldsymbol{a}\rangle(\partial_{(t)}\langle\boldsymbol{b}\rangle)
\rangle_{i_1,\ldots,i_s} & (i_s<s+t).
\end{cases}
\end{align*}

\begin{sloppypar}
The terms of the form
$-\partial_{(i)}\langle
\langle\boldsymbol{a}\rangle\langle\boldsymbol{b}\rangle
\rangle_{i_1,\ldots,i_{k-1},i_k+1,i_{k+1},\ldots,i_s}$
($i_k=i<i+1<i_{k+1}$) and
$-\partial_{(i)}\langle
\langle\boldsymbol{a}\rangle\langle\boldsymbol{b}\rangle
\rangle_{i_1,\ldots,i_k,i_{k+1}-1,i_{k+2},\ldots,i_s}$
($i_k<i<i+1=i_{k+1}$) cancel in pairs.
The other terms are organized as
\begin{align*}
&\sum_{1\leq i_1<\cdots<i_{s-1}\leq s+t-1}\sum_{i=0}^s
\langle
(\partial_{(i)}\langle\boldsymbol{a}\rangle)\langle\boldsymbol{b}\rangle
\rangle_{i_1,\ldots,i_{s-1}}
+\sum_{1\leq i_1<\cdots<i_s\leq s+t-1}\sum_{i=0}^{t}
(-1)^s \langle
\langle\boldsymbol{a}\rangle (\partial_{(i)}\langle\boldsymbol{b}\rangle)
\rangle_{i_1,\ldots,i_s} \\
%\partial \langle \langle\boldsymbol{a}\rangle \langle\boldsymbol{b}\rangle \rangle
&=
\sum_{1\leq i_1<\cdots<i_{s-1}\leq s+t-1}\langle
(\partial\langle\boldsymbol{a}\rangle) \langle\boldsymbol{b}\rangle
\rangle_{i_1,\ldots,i_{s-1}}
+\sum_{1\leq i_1<\cdots<i_s\leq s+t-1}(-1)^s\langle
\langle\boldsymbol{a}\rangle (\partial\langle\boldsymbol{b}\rangle)
\rangle_{i_1,\ldots,i_s} \\
&=
\langle(\partial\langle\boldsymbol{a}\rangle)
\langle\boldsymbol{b}\rangle\rangle
+(-1)^s\langle\langle\boldsymbol{a}\rangle
(\partial\langle\boldsymbol{b}\rangle)\rangle,  
\end{align*}
where $\langle\,\cdot\,\rangle_{i_1,\ldots,i_s}$ is linearly extended.
\end{sloppypar}
\end{proof}

Since the Leibniz rule holds
(the preceding Lemma~\ref{lem:decomp}), 
we have the following.

\begin{lemma} \label{lem:subcomplexM}
$D_*(X)_Y=(D_n(X)_Y,\partial_n)$ is a subcomplex of $P_*(X)_Y$.
\end{lemma}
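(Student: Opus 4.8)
The plan is to verify the inclusion $\partial_n(D_n(X)_Y)\subset D_{n-1}(X)_Y$ on generators, reducing everything to the Leibniz rule of Lemma~\ref{lem:decomp}. Fix a generator
\[ g=\sigma\langle\boldsymbol{a}\rangle\langle\boldsymbol{b}\rangle\tau-\sigma\langle\langle\boldsymbol{a}\rangle\langle\boldsymbol{b}\rangle\rangle\tau, \]
where $\sigma=\langle y\rangle\langle\boldsymbol{a}_1\rangle\cdots$ is the string of blocks preceding the marked pair, $\tau$ is the string following it, and $\boldsymbol{a},\boldsymbol{b}$ have entries in a common $G_\lambda$. Writing $\delta=\langle\boldsymbol{a}\rangle\langle\boldsymbol{b}\rangle-\langle\langle\boldsymbol{a}\rangle\langle\boldsymbol{b}\rangle\rangle$ and noting $|\langle\boldsymbol{a}\rangle\langle\boldsymbol{b}\rangle|=|\langle\langle\boldsymbol{a}\rangle\langle\boldsymbol{b}\rangle\rangle|=|\boldsymbol{a}|+|\boldsymbol{b}|$, the Leibniz rule established for $P_*(X)_Y$, applied twice, gives
\[ \partial g=(\partial\sigma)\,\delta\,\tau+(-1)^{|\sigma|}\sigma\,(\partial\delta)\,\tau+(-1)^{|\sigma|+|\boldsymbol{a}|+|\boldsymbol{b}|}\sigma\,\delta\,(\partial\tau). \]
I would then show that each of the three groups of terms separately lies in $D_{n-1}(X)_Y$.

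The first group is immediate: every summand of $\partial\sigma$ is again a prismatic chain, and any $*$-operator it produces acts only on blocks internal to $\sigma$, so $(\partial\sigma)\,\delta\,\tau$ is a linear combination of elements of the exact shape defining $D_{n-1}(X)_Y$. For the third group, the leading factor $*c$ produced by $\partial\tau$ acts on everything to its left, in particular on $\delta$; here one uses the compatibility
\[ \langle\langle\boldsymbol{a}\rangle\langle\boldsymbol{b}\rangle\rangle*c=\langle\langle\boldsymbol{a}*c\rangle\langle\boldsymbol{b}*c\rangle\rangle, \]
so that $\delta*c=\langle\boldsymbol{a}*c\rangle\langle\boldsymbol{b}*c\rangle-\langle\langle\boldsymbol{a}*c\rangle\langle\boldsymbol{b}*c\rangle\rangle$ is once more a difference of the defining type. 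This identity is a short computation from the MCQ axioms, the essential point being $(a_k*(b_1\cdots b_m))*c=(a_k*c)*((b_1*c)\cdots(b_m*c))$, which follows from axioms (2), (3) and (4). Hence the third group also lands in $D_{n-1}(X)_Y$.

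The heart of the argument is the middle group $\sigma(\partial\delta)\tau$, and this is exactly where Lemma~\ref{lem:decomp} enters. Applying the ordinary Leibniz rule to $\langle\boldsymbol{a}\rangle\langle\boldsymbol{b}\rangle$ and Lemma~\ref{lem:decomp} to $\langle\langle\boldsymbol{a}\rangle\langle\boldsymbol{b}\rangle\rangle$ produces two boundary formulas of identical shape, whose difference is
\[ \partial\delta=\bigl[(\partial\langle\boldsymbol{a}\rangle)\langle\boldsymbol{b}\rangle-\langle(\partial\langle\boldsymbol{a}\rangle)\langle\boldsymbol{b}\rangle\rangle\bigr]+(-1)^{|\boldsymbol{a}|}\bigl[\langle\boldsymbol{a}\rangle(\partial\langle\boldsymbol{b}\rangle)-\langle\langle\boldsymbol{a}\rangle(\partial\langle\boldsymbol{b}\rangle)\rangle\bigr]. \]
Expanding $\partial\langle\boldsymbol{a}\rangle$ and $\partial\langle\boldsymbol{b}\rangle$ into their single-block summands and using the linearity of $\langle\langle\cdot\rangle\langle\cdot\rangle\rangle$, each bracket becomes a sum of terms of the form $\langle\boldsymbol{a}'\rangle\langle\boldsymbol{b}\rangle-\langle\langle\boldsymbol{a}'\rangle\langle\boldsymbol{b}\rangle\rangle$ (respectively $\langle\boldsymbol{a}\rangle\langle\boldsymbol{b}'\rangle-\langle\langle\boldsymbol{a}\rangle\langle\boldsymbol{b}'\rangle\rangle$), each a generator of the decomposition degeneracies. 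Surrounding by $\sigma$ and $\tau$ preserves this, so $\sigma(\partial\delta)\tau\in D_{n-1}(X)_Y$ and therefore $\partial g\in D_{n-1}(X)_Y$.

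I expect the main obstacle to be the bookkeeping of the leading $*$-operators: one must track carefully that an operator $*c$ arising from a boundary acts only on the blocks to its left and that it is compatible with the shuffle $\langle\langle\cdot\rangle\langle\cdot\rangle\rangle$, as used for the third group above. Granting that compatibility and Lemma~\ref{lem:decomp}, the remaining verifications are purely formal term-matching against the definition of $D_{n-1}(X)_Y$.
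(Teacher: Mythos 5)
Your proposal is correct and takes essentially the same approach as the paper: the paper's entire proof is the one-line observation that the Leibniz rule of Lemma~\ref{lem:decomp} yields the subcomplex property, and your three-term expansion of $\partial(\sigma\delta\tau)$, together with the compatibility $\langle\langle\boldsymbol{a}\rangle\langle\boldsymbol{b}\rangle\rangle*c=\langle\langle\boldsymbol{a}*c\rangle\langle\boldsymbol{b}*c\rangle\rangle$ (which indeed follows from MCQ axioms (3) and (4)), is precisely the verification the paper leaves implicit.
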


%%%%%%%%%%%%%%%%%%%%%%%%%%%%%%%%%%%%%%%
%%%%%%%%%%%%%%%%%%%%%%%%%%%%%%%%%%%%%%%
\section{Chain map for simplicial decomposition}
\label{sec:G}

In this section we examine relations between group and MCQ homology theories.

%%%%%%%%%%%%%%%%%%%%%%%%%%%%%%%%%%%%%%%
\subsection{Simplicial decomposition (general case)}

We observe an associativity of the notation 
$\langle\langle\boldsymbol{a}\rangle \langle\boldsymbol{b}\rangle\rangle$ 
%$\langle\langle\boldsymbol{\cdot}\rangle \langle\boldsymbol{\cdot}\rangle\rangle$ 
defined in Section~\ref{sec:homology}, and extend the notation to multi-tuples.  
%it to a multi-tuple of the form 
%$\langle \langle\bm{a}_1\rangle \cdots \langle\bm{a}_k\rangle
%\rangle$ inductively. 
%%%%%%%%%%%%%%%%%%%%%%%%%%%
For an expression of the form
$\langle\boldsymbol{a}\rangle \langle\boldsymbol{b}\rangle \langle\bm{c}\rangle$ 
in a chain in $P_*(X)_Y$, where
$\bm{a},\bm{b},\bm{c} \in \bigcup_{m \in \N} G_\lambda^m$, 
%$\langle\boldsymbol{a}\rangle=\langle a_1,\ldots,a_s\rangle$, 
%$\langle\boldsymbol{b}\rangle=\langle b_1,\ldots,b_t\rangle$ and
%$\langle\boldsymbol{c}\rangle=\langle c_1,\ldots,c_u\rangle$ satisfy
%$a_i,b_j,c_k\in G_{\lambda}$ for all $i=1,\ldots,s$, $j=1,\ldots,t$ and $k=1,\ldots,u$, 
it is easy to see that we have the following. 

\begin{lemma}\label{lem:comm}
$
\langle
\langle \langle\bm{a}\rangle \langle\bm{b}\rangle \rangle 
\langle\bm{c}\rangle 
\rangle
=
\langle
\langle\bm{a}\rangle 
\langle \langle\bm{b}\rangle \langle\bm{c}\rangle \rangle
\rangle
$. 
\end{lemma}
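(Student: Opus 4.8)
The plan is to expand both sides into signed sums indexed by the same combinatorial data and then match them term by term. Writing $n=s+t+u$, each term of either side will be governed by an ordered partition $\{1,\dots,n\}=A\sqcup B\sqcup C$ with $|A|=s$, $|B|=t$, $|C|=u$, recording the positions occupied by the entries of $\boldsymbol a$, $\boldsymbol b$, $\boldsymbol c$ in the resulting length-$n$ sequence. First I would verify that both bracketings range bijectively over these ordered partitions: on the left, choosing the placement of the combined $(s+t)$-block among all $n$ positions and then the placement of $\boldsymbol a$ inside that block recovers $(A,B,C)$; on the right, choosing the $(b,c)$-interleaving and then the placement of $\boldsymbol a$ among all $n$ positions does the same.

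Next I would check the signs. The sign attached to a placement in the binary bracket $\langle\langle\,\cdot\,\rangle\langle\,\cdot\,\rangle\rangle$ is $(-1)$ raised to the number of inversions between the two blocks, and these shuffle signs are multiplicative under nesting. Both associations therefore produce the same total sign $(-1)^{\mathrm{inv}(A,B)+\mathrm{inv}(A,C)+\mathrm{inv}(B,C)}$, so the signs agree for corresponding terms, independently of the group-element data.

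The heart of the argument is matching the group-element decorations. For a fixed $(A,B,C)$ I would show that both sides yield the common normal form in which each $c_\ell$ is left unchanged, each $b_j$ is replaced by $b_j$ conjugated by the product $P_c$ of the $c$'s preceding it, and each $a_k$ is replaced by $a_k*(P_bP_c)$, where $P_b$ (resp. $P_c$) is the product of the $b$'s (resp. $c$'s) lying before $a_k$. For the left side this is immediate once the two successive rounds of conjugation are composed using the MCQ axiom $x*(gh)=(x*g)*h$. For the right side the entry for $a_k$ instead emerges as $a_k*R$, where $R$ is the ordered product of the already $(b,c)$-shuffled entries preceding $a_k$; the decisive step is a telescoping identity in $G_\lambda$ showing $R=P_bP_c$.

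I expect this telescoping identity to be the main obstacle, since it is precisely what forces the two associations to agree. It can be proved by induction on the length of the relevant prefix of the $(b,c)$-interleaving, using only that $b*c=c^{-1}bc$ in $G_\lambda$; the conjugating factors collapse, as in $c\,(b*c)=c\,c^{-1}bc=bc$. It is worth emphasizing that this computation is purely group-theoretic inside the single component $G_\lambda$ and, perhaps unexpectedly, does not invoke self-distributivity (axiom (3)): only the partial group structure (axiom (1)) and the compatibility axiom (2) are used. Once the index sets, the signs, and the decorations have been matched in this way, the two sides coincide term by term and the lemma follows.
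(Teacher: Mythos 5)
Your proposal is correct and follows essentially the route the paper itself has in mind: the paper states the lemma without proof (``it is easy to see''), but its direct formula for $\langle\langle\bm{a}_1\rangle\cdots\langle\bm{a}_k\rangle\rangle'$ in Subsection~\ref{subsec:decomp-direct} --- terms indexed by vectors $\bm{v}$ (equivalently your ordered partitions $(A,B,C)$), signs counting transpositions (your inversion count), and decorations $c_i=a_{v_i,\#^i_{v_i}\bm{v}}*\prod_{s>v_i}\prod_t a_{s,t}$ (your common normal form $a_k*(P_bP_c)$, $b_j*P_c$, $c_\ell$) --- is exactly the normal form you match both bracketings against. Your telescoping identity $R=P_bP_c$, proved from $b*c=c^{-1}bc$ and axiom (2) inside the single component $G_\lambda$, correctly supplies the detail the paper leaves implicit.
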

By Lemma~\ref{lem:comm}, we can define 
$\langle 
\langle\bm{a}\rangle \langle\bm{b}\rangle \langle\bm{c}\rangle
\rangle$ 
by 
$\langle
\langle \langle\bm{a}\rangle \langle\bm{b}\rangle \rangle 
\langle\bm{c}\rangle 
\rangle
=
\langle
\langle\bm{a}\rangle 
\langle \langle\bm{b}\rangle \langle\bm{c}\rangle \rangle
\rangle$. 
Moreover, for an expression of the form
$\langle\bm{a}_1\rangle \cdots \langle\bm{a}_k\rangle$ 
in a chain in $P_*(X)_Y$, where 
$\bm{a}_1, \ldots, \bm{a}_k \in \bigcup_{m \in \N} G_\lambda^m$, 
we can define 
$\langle
\langle\bm{a}_1\rangle \cdots \langle\bm{a}_k\rangle
\rangle$ 
inductively. 
Then, by Lemma~\ref{lem:decomp}, this notation is compatible with the boundary homomorphism $\partial$ 
in the following sense. 
%It holds a commutativity of the notation 
%$\langle \langle\bm{a}_1\rangle \cdots \langle\bm{a}_k\rangle \rangle$ 
%$\langle \langle\bm{\cdot}\rangle \cdots \langle\bm{\cdot}\rangle \rangle$ 
%and the boundary homomorphism $\partial$ such that 
\begin{lemma}\label{lem:bigcomm} %
$ %$
\partial \langle
\langle\bm{a}_1\rangle \cdots \langle\bm{a}_k\rangle
\rangle
=
\langle \partial(
\langle\bm{a}_1\rangle \cdots \langle\bm{a}_k\rangle
)\rangle. 
$ % $
\end{lemma}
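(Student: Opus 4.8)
The plan is to induct on the number $k$ of factors. The case $k=1$ is a tautology, since the full-decomposition bracket acts trivially on a single tuple, and the case $k=2$ is precisely the Leibniz rule of Lemma~\ref{lem:decomp}. For the inductive step I assume the identity for $k-1$ factors and peel off the last factor using the generalized associativity established just after Lemma~\ref{lem:comm}. Writing $u:=\langle\langle\bm{a}_1\rangle\cdots\langle\bm{a}_{k-1}\rangle\rangle$, which is a chain all of whose terms are single tuples of the \emph{fixed} total length $|\bm{a}_1|+\cdots+|\bm{a}_{k-1}|$, associativity gives $\langle\langle\bm{a}_1\rangle\cdots\langle\bm{a}_k\rangle\rangle=\langle u\,\langle\bm{a}_k\rangle\rangle$.

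Next I apply Lemma~\ref{lem:decomp} termwise, using linearity of the bracket together with the fact that every term of $u$ has the same length, so that the sign $(-1)^{|u|}=(-1)^{|\bm{a}_1|+\cdots+|\bm{a}_{k-1}|}$ is uniform. This yields
\[
\partial\langle\langle\bm{a}_1\rangle\cdots\langle\bm{a}_k\rangle\rangle
=\langle(\partial u)\langle\bm{a}_k\rangle\rangle
+(-1)^{|\bm{a}_1|+\cdots+|\bm{a}_{k-1}|}\langle u\,(\partial\langle\bm{a}_k\rangle)\rangle .
\]
On the first summand I invoke the inductive hypothesis $\partial u=\langle\partial(\langle\bm{a}_1\rangle\cdots\langle\bm{a}_{k-1}\rangle)\rangle$ and re-associate via Lemma~\ref{lem:comm} to rewrite $\langle(\partial u)\langle\bm{a}_k\rangle\rangle$ as $\langle\partial(\langle\bm{a}_1\rangle\cdots\langle\bm{a}_{k-1}\rangle)\,\langle\bm{a}_k\rangle\rangle$; the second summand is, after the same re-association, exactly $(-1)^{|\bm{a}_1|+\cdots+|\bm{a}_{k-1}|}\langle\langle\bm{a}_1\rangle\cdots\langle\bm{a}_{k-1}\rangle\,(\partial\langle\bm{a}_k\rangle)\rangle$. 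Here I note that all elements produced by $\partial$ remain in $G_\lambda$ (since $a*b=b^{-1}ab\in G_\lambda$ for $a,b\in G_\lambda$), so the full-decomposition operator stays applicable throughout.

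Finally I expand the right-hand side of the claim directly. Splitting the $P_*$-Leibniz formula for $\partial(\langle\bm{a}_1\rangle\cdots\langle\bm{a}_k\rangle)$ into the contribution of the first $k-1$ factors, which reassembles into $\partial(\langle\bm{a}_1\rangle\cdots\langle\bm{a}_{k-1}\rangle)$, and the contribution of $\bm{a}_k$, which carries precisely the prefix sign $(-1)^{|\bm{a}_1|+\cdots+|\bm{a}_{k-1}|}$, and then applying $\langle\,\cdot\,\rangle$, reproduces the two summands obtained above. Matching them termwise closes the induction. The step I expect to be the main obstacle is the sign bookkeeping: one must verify that the Koszul sign supplied by Lemma~\ref{lem:decomp} coincides with the prefix sign appearing in the $P_*$ differential, and confirm that re-associating chains of single tuples under $\langle\,\cdot\,\rangle$ introduces no signs beyond those already packaged into the inductive hypothesis.
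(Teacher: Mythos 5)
Your proof is correct and is essentially the paper's own argument made explicit: the paper derives Lemma~\ref{lem:bigcomm} directly from the inductive definition of the multi-bracket (via the associativity of Lemma~\ref{lem:comm}) together with the two-factor Leibniz rule of Lemma~\ref{lem:decomp}, which is exactly the induction on $k$ you carry out, including the observation that every term of $\langle\langle\bm{a}_1\rangle\cdots\langle\bm{a}_{k-1}\rangle\rangle$ is a single tuple of the fixed length $|\bm{a}_1|+\cdots+|\bm{a}_{k-1}|$, so the Koszul sign from Lemma~\ref{lem:decomp} matches the prefix sign in the differential of $P_*(X)_Y$.
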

We give a direct formula (instead of induction) for 
the notation 
$\langle
\langle\bm{a}_1\rangle \cdots \langle\bm{a}_k\rangle
\rangle$ 
later in Subsection~\ref{subsec:decomp-direct}.

%%%%%%%%%%%%%%%%%%%%%%%%%%%%%%%%%%%%%%%
\subsection{Chain map (from MCQ to Group)}
\label{subsec:G}

Let $X=\coprod_{\lambda\in\Lambda}G_\lambda$ be a multiple conjugation quandle, 
and let $Y$ be an $X$-set.
Let $P_n^G(X)_Y$ be the subgroup of $P_n(X)_Y$ generated by the elements of the form 
$\langle y \rangle \langle \bm{a} \rangle$. 
Let $D_n^G(X)_Y$ and $D_n^{G,\uparrow}(X)_Y$ be respectively 
$P_n^G(X)_Y \cap D_n(X)_Y$ and $P_n^G(X)_Y \cap D_n^{\uparrow}(X)_Y$, 
both of which are the subgroups of $P_n^G(X)_Y$. 
Note that $D_n^G(X)_Y=P_n^G(X)_Y \cap D_n(X)_Y$ is the trivial group.  
We put 
\begin{align*}
&C_n^G(X)_Y := P_n^G(X)_Y/D_n^G(X)_Y =P_n^G(X)_Y, \\ 
&C_n^{G,\uparrow}(X)_Y := P_n^{G}(X)_Y/(D_n^{G}(X)_Y + D_n^{G,\uparrow}(X)_Y)  
= P_n^{G}(X)_Y/D_n^{G,\uparrow}(X)_Y .
\end{align*}
Then $C_*^G(X)_Y=(C_n^G(X)_Y,\partial_n)$ and
$C_*^{G,\uparrow}(X)_Y=(C_n^{G,\uparrow}(X)_Y,\partial_n)$ are chain complexes.
If $X$ is a group (regarded as $X=\coprod_{\lambda\in\Lambda}G_\lambda$ with 
$\Lambda$ a singleton) and $Y$ is a singleton, 
$C_*^G(X)_Y$ is essentially the same as the chain complex of usual group homology.
For an abelian group $A$, we define the cochain complexes
\begin{align*}
&C^*_G(X;A)_Y=\operatorname{Hom}(C_*^G(X)_Y,A)
\ {\rm and} \ 
C^*_{G,\uparrow}(X;A)_Y=\operatorname{Hom}(C_*^{G,\uparrow}(X)_Y,A).
\end{align*}

When $X$ is a multiple conjugation quandle consisting of a single group, 
define  homomorphisms 
$\Delta: P_*(X)_Y \to P_*^G(X)_Y$ by 
$$\Delta (
\langle\bm{a}_{1}\rangle \cdots \langle\bm{a}_{m}\rangle
):=
\langle \langle\bm{a}_{1}\rangle \cdots \langle\bm{a}_{m}\rangle \rangle . 
$$
Then by Lemma~\ref{lem:bigcomm} and from definitions, we have the following.
\begin{proposition}\label{prop:MCQ2G}
The homomorphisms $\Delta: P_*(X)_Y \to P_*^G(X)_Y$ give rise to a chain homomorphism.
Furthermore, $\Delta$  induces the chain homorphisms $\Delta: C_*(X)_Y \to C_*^G(X)_Y$  and  $\Delta: C_*^{\uparrow}(X)_Y \to C_*^{G,\uparrow}(X)_Y$. 
\end{proposition}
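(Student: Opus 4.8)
The plan is to verify the three assertions in turn: that $\Delta$ commutes with $\partial$, that it annihilates the decomposition degeneracies, and that it carries the orientation degeneracies into $D^{G,\uparrow}_*(X)_Y$. The chain-map property $\partial\Delta=\Delta\partial$ is essentially a restatement of Lemma~\ref{lem:bigcomm}. By definition $\Delta(\langle\bm{a}_1\rangle\cdots\langle\bm{a}_k\rangle)=\langle\langle\bm{a}_1\rangle\cdots\langle\bm{a}_k\rangle\rangle$, while the right-hand side of Lemma~\ref{lem:bigcomm}, namely $\langle\partial(\langle\bm{a}_1\rangle\cdots\langle\bm{a}_k\rangle)\rangle$, is by its very definition $\Delta$ applied to $\partial(\langle\bm{a}_1\rangle\cdots\langle\bm{a}_k\rangle)$ (the big-bracket operation extended linearly over the boundary terms). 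Thus $\partial\Delta=\Delta\partial$ holds on generators and hence everywhere; reintroducing the suppressed factor $\langle y\rangle$ and the action of $*$ on it changes nothing.

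To obtain the induced map $\Delta\colon C_*(X)_Y\to C_*^G(X)_Y$, note that $C_n^G(X)_Y=P_n^G(X)_Y$ since $D_n^G(X)_Y=0$, so it suffices to show $\Delta(D_n(X)_Y)=0$. A generator of $D_n(X)_Y$ has the form $u\langle\bm{a}\rangle\langle\bm{b}\rangle u'-u\langle\langle\bm{a}\rangle\langle\bm{b}\rangle\rangle u'$ with $\bm{a},\bm{b}$ in one $G_\lambda$. Applying $\Delta$ and invoking the associativity of the big-bracket notation—Lemma~\ref{lem:comm}, extended inductively to $\langle\langle\bm{a}_1\rangle\cdots\langle\bm{a}_k\rangle\rangle$—both terms become the single full decomposition of $u\langle\bm{a}\rangle\langle\bm{b}\rangle u'$, so their difference maps to $0$ and $\Delta$ descends.

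For $\Delta\colon C_*^{\uparrow}(X)_Y\to C_*^{G,\uparrow}(X)_Y$, since $\Delta$ already kills $D_n(X)_Y$, it remains to check $\Delta(D_n^\uparrow(X)_Y)\subseteq D_n^{G,\uparrow}(X)_Y$. Because $\Delta$ takes values in $P_n^G(X)_Y$ and $D_n^{G,\uparrow}(X)_Y=P_n^G(X)_Y\cap D_n^\uparrow(X)_Y$, this reduces to showing $\Delta(D_n^\uparrow(X)_Y)\subseteq D_n^\uparrow(X)_Y$. I would argue on a generator $u+u_i^{-1}$, the original chain plus the chain with one bracket replaced by its $i$-th orientation reversal. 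The strategy is to establish a compatibility identity to the effect that $\Delta$ intertwines reversal modulo $D_n^\uparrow$: decomposing the bracket-reversed chain gives, up to elements of $D_n^\uparrow$, the reversal of the corresponding positions in the single-tuple chain $\Delta(u)$. Granting this, $\Delta(u+u_i^{-1})\equiv\Delta(u)+(\text{reversal of }\Delta(u))\pmod{D_n^\uparrow}$, and since $\Delta(u)$ is already a single-tuple chain in $P_n^G(X)_Y$, this last sum is a combination of single-tuple orientation-degeneracy generators, hence lies in $D_n^{G,\uparrow}(X)_Y$. The fully and partially reversed relations recorded in the remarks preceding the $D^\uparrow$-subcomplex lemma in Section~\ref{sec:unori}, carrying the sign $(-1)^{i(i+1)/2}$ and the conjugation $*(a_1\cdots a_i)$, are exactly the tool for this matching.

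The hard part will be this compatibility identity. The full decomposition $\Delta$ shuffles the entries of the different brackets into one tuple, introducing conjugation factors $a_k*(b_1\cdots b_{i-k})$ and signs $(-1)^{\sum(i_k-k)}$, whereas the reversal $(\cdot)_i^{-1}$ inserts the doubled-back entry $a_i^{-1}$ between $a_{i-1}a_i$ and $a_ia_{i+1}$ with its own conjugation and sign. Reconciling these two families of signs and conjugations—showing the shuffle of a reversed factor equals, modulo $D_n^\uparrow$, the reversal of the shuffle—is the combinatorial heart of the argument, and I expect it is most cleanly handled by induction on the number of brackets via Lemma~\ref{lem:comm}, reducing first to the two-bracket case and then to the single-entry reversal identities already verified in Section~\ref{sec:unori}.
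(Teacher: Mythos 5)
Your first two steps are sound and agree with the paper's (very terse) proof: the chain-map property is exactly Lemma~\ref{lem:bigcomm}, and your associativity argument for $\Delta(D_n(X)_Y)=0$ is equivalent to the telescoping observation that $w-\Delta(w)\in D_n(X)_Y$ for every prismatic chain $w$, since the full decomposition is reached by successively merging adjacent brackets, each merge being a $D_n$-generator. The genuine problem is the third step, and it sits precisely at the ``compatibility identity'' you flagged as the combinatorial heart: that identity is false, and in fact the inclusion $\Delta(D_n^\uparrow(X)_Y)\subseteq D_n^{G,\uparrow}(X)_Y$ to which you correctly reduced the claim fails already for $n=2$. Compute
\begin{align*}
\Delta\bigl(\langle y\rangle\langle a\rangle\langle b\rangle+\langle y*a\rangle\langle a^{-1}\rangle\langle b\rangle\bigr)
&=\langle y\rangle\langle a,b\rangle-\langle y\rangle\langle b,a*b\rangle\\
&\hspace{5mm}+\langle y*a\rangle\langle a^{-1},b\rangle-\langle y*a\rangle\langle b,a^{-1}*b\rangle.
\end{align*}
Every generator of $D_2^\uparrow(X)_Y$ lying in $P_2^G(X)_Y$ relates $\langle z\rangle\langle c,d\rangle$ to $\langle z*c\rangle\langle c^{-1},cd\rangle$ or to $\langle z\rangle\langle cd,d^{-1}\rangle$, and both moves preserve the unordered triple $\{c,\,d,\,(cd)^{-1}\}$ of entries taken up to inversion (the three edges of the trivalent vertex the chain represents). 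The four terms above realize the triples $\{a,b,ab\}$, $\{b,a*b,ab\}$, $\{a,b,a^{-1}b\}$, $\{b,a*b,a^{-1}b\}$, which are pairwise distinct when, say, $G$ is free on $a,b$; so the four terms lie in four distinct equivalence classes of the reversal relations, and a $\Z_2$-valued homomorphism supported on one class kills all of $D_2^\uparrow(X)_Y$ but not this element. Hence no bookkeeping of the signs $(-1)^{i(i+1)/2}$ and conjugations can rescue the congruence $\Delta(u_i^{-1})\equiv\mathrm{rev}(\Delta(u))\pmod{D_n^\uparrow}$: the two sides are simply not congruent.

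The conceptual reason is geometric: $\Delta$ resolves the crossing square along one diagonal, while reversing the arc $a$ flips the square so that $\Delta$ of the reversed chain uses the \emph{other} diagonal; the two resolutions have internal edges $ab$ and $a^{-1}b$ and differ by an R6 (IH) move, which in this theory is a $\partial_3$-boundary (cf.\ Table~\ref{table:IIJO-CIST}), not an orientation degeneracy --- for instance, in the abelian case the displayed element equals $-\partial_3\langle y\rangle\langle a^{-1},a,b\rangle$ up to degeneracies. So the defect is a boundary, and the chain-level inclusion into $D_n^{G,\uparrow}(X)_Y=P_n^G(X)_Y\cap D_n^\uparrow(X)_Y$ cannot be established by any argument confined to $D_n^\uparrow$; note the paper's one-line proof (``by Lemma~\ref{lem:bigcomm} and from definitions'') glosses over exactly this point. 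What \emph{does} follow immediately from $w-\Delta(w)\in D_n(X)_Y$ is that $\Delta(D_n+D_n^\uparrow)\subseteq P_n^G\cap(D_n+D_n^\uparrow)$; thus the second induced chain map exists verbatim if one enlarges the denominator of $C_n^{G,\uparrow}(X)_Y$ from $D_n^{G,\uparrow}(X)_Y$ to $P_n^G(X)_Y\cap(D_n(X)_Y+D_n^\uparrow(X)_Y)$, or alternatively one should only claim an induced map after passing to homology, correcting by the $\partial_3$-images above. As written, your plan for the third step would fail, and the failure is in the statement being reduced to, not merely in the missing combinatorics.
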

We note that the chain homomorphisms $\Delta$ are defined only for a MCQ 
consisting of a single group. 
In this case, we also have the cochain homomorphisms 
$\Delta: C^*_{G}(X;A)_Y \to C^*(X;A)_Y$ 
and 
$\Delta: C^*_{G,\uparrow}(X;A)_Y \to C^*_{\uparrow}(X;A)_Y$ 
for an abelian group $A$. 
Hence, for a given cocycle of group homology theory, 
we can obtain that of our theory through $\Delta$. 
This approach will be discussed in Section~\ref{sec:attempt}.

\begin{remark} 
We point out here that for a group $X=\Z_3$ and a trivial $X$-set $Y$, 
there is a group $2$-cocycle $\eta$
that satisfies the conditions 
 in $C^2_{G,\uparrow}(X)_Y$ 
 (coming from $D_n^{G,\uparrow}(X)_Y$), 
$$\eta \langle  a,b\rangle + \eta \langle  a^{-1},ab\rangle =0
\quad  {\rm and} \quad
\eta \langle  a,b\rangle + \eta \langle   ab,b^{-1}\rangle =0 .
$$
Specifically, let $\eta: \Z_3 \times \Z_3 \rightarrow \Z_3$ denote the function 
that  has  values $\eta(1,1)=1$, $\eta(2,2)=2$ and $\eta(g, h)=0$ otherwise. 
It is a direct calculation that the condition above is satisfied. 
Furthermore, to see that $\eta$ is a cocycle, consider the generating cocycle 
over $G=\Z_p$ where   $p$ is a prime that is defined by 
$$\eta_0 (x, y) = (1/p) (\overline{x} + \overline{y} - \overline{x+y} ) \pmod{p}, $$
where $\overline{x}$ is an integer $0 \leq \overline{x} < p $ such that $\overline{x}= x \pmod{p}$.
It is known that $\eta_0$ is a generating $2$-cocycle for 
$H^2_G(\Z_p; \Z_p)$
for prime $p$.
For $p=3$,
let $\zeta $ be a $1$-chain defined by $\zeta(0)=0$ and $\zeta(1)+\zeta(2)=2$.
Then  one can easily compute that  $\eta=\eta_0 + \delta \zeta.$ 
Hence there is a $2$-cocycle $\eta \in C^2_{G,\uparrow}(X)_Y$  of our theory
that is cohomologous to the standard group $2$-cocyle $\eta_0$.
\end{remark}

%\newpage
%%%%%%%%%%%%%%%%%%%%%%%%%%%%%%%%%%%%%%%
\subsection{Simplicial decomposition (direct formula)}
\label{subsec:decomp-direct}

We give a direct formula (instead of induction) for 
the notation 
$\langle
\langle\bm{a}_1\rangle \cdots \langle\bm{a}_k\rangle
\rangle$.  
To the term 
$
\langle\langle\boldsymbol{a}\rangle  \langle\boldsymbol{b}\rangle 
\rangle_{i_1,\ldots,i_s}
$, 
we associate a vector $\boldsymbol{v} = (v_1, \ldots, v_n) \in \{ 1, 2\}^{n} $
by defining $v_i=1 $ if $i=i_j$ for some $j$, and otherwise $v_i=2$, where $n=s+t$. 
In the term
\[ c_i=\begin{cases}
a_k*(b_1\cdots b_{i-k}) & \text{if $i=i_k$,} \\
b_{i-k} & \text{if $i_k<i<i_{k+1}$,}
\end{cases} \]
the first entry with $a_k$  in it corresponds to $v_i=1$ and the second with $b_{i-k} $ to $v_i=2$. 
We regard that the term $a_k$  came from the first part $\langle\boldsymbol{a}\rangle $ in 
$
\langle\langle\boldsymbol{a}\rangle  \langle\boldsymbol{b}\rangle 
\rangle_{i_1,\ldots,i_s}
$ so that $v_i=1$ is assigned, 
and the term $b_{i-k} $ belongs to the second part  $\langle\boldsymbol{b}\rangle$
receiving $v_i=2$. 

\begin{example}
{\rm 
For the term $\langle a \rangle \langle b, c  \rangle$ discussed for Figure~\ref{prism}, 
the terms 
$  \langle a,b,c \rangle  $, $ -  \langle b,a*b,c \rangle  $, and $  \langle b,c,a*(bc)  \rangle $
correspond to  the vectors $(1,2,2)$, $(2,1,2)$, and $(2,2,1)$, respectively. 
Note that  $(2,1,2)$ is obtained from $(1,2,2)$ by a transposition of the first two entries, and this fact reflects in Figure~\ref{prism} that the tetrahedra represented by these vectors share a triangular internal  face.
We indicate by an edge between two vectors when one is obtained from the other by a transposition
of consecutive entries. 
In this case we write:
\[ \begin{array}{c@{~}c@{~}c@{~}c@{~}c}
(1,2,2) & - & (2,1,2) & - & (2,2,1).
\end{array} \]

For $\langle a,b\rangle \langle c,d \rangle$, 
the terms 
$
\langle\langle\boldsymbol{a}\rangle  \langle\boldsymbol{b}\rangle 
\rangle_{i_1,\ldots,i_s}
$
are listed as 
$\langle  a,b,c,d  \rangle $, 
$- \langle  a,c,b*c,d  \rangle $,
$\langle   c, a*c, b*c, d\rangle $,
$\langle  a, c, d, b*(cd)   \rangle $,
$- \langle  c, a*c, d, b*(cd)  \rangle $,
$\langle  c,  d, a*(cd) , b*(cd)  \rangle $,
and these correspond to vectors
$$( 1,1,2,2), \; (1,2,1,2), \; (2,1,1,2), \; (1,2,2,1),\; (2,1,2,1),\; (2,2,1,1), $$
respectively. They are connected by edges as 
\[ \begin{array}{c@{\,}c@{\,}c@{\,}c@{\,}c@{\,}c@{\,}c@{\,}c@{\,}c@{\,}c}
& & & & (2,1,1,2) & & & & \\
& & & \diagup & & \diagdown & & & \\
(1,1,2,2) & - & (1,2,1,2) & & & &(2,1,2,1) & - & (2,2,1,1) \\
& & & \diagdown & & \diagup & & & \\
& & & &  (1,2,2,1) & & & &
\end{array} \]
indicating which simplices share internal faces.
Note that from a vector 
$\boldsymbol{v} = (v_1, \ldots, v_n) \in \{ 1, 2\}^{n} $
 the subscripts $i_1, \ldots , i_s$ in $\langle\langle\boldsymbol{a}\rangle  \langle\boldsymbol{b}\rangle 
\rangle_{i_1,\ldots,i_s}$ are recovered by the condition $v_{i_j}=1$.

}
\end{example}

%%%%%%%%%%%%%%%%%%%%%%%%%%
\begin{sloppypar}
For an expression of the form
$\langle\bm{a}_1\rangle \cdots \langle\bm{a}_k\rangle$ 
in a chain in $P_*(X)_Y$, where 
$\bm{a}_1, \ldots, \bm{a}_k \in \bigcup_{m \in \N} G_\lambda^m$, 
we put $n=|\bm{a}_1|+\cdots+|\bm{a}_k|$ and consider
vectors  $\boldsymbol{v}=(v_1,\ldots,v_n)\in\{1,\ldots,k\}^n$, and denote 
by $\#_j^i\boldsymbol{v}$ the number of $j$'s in $v_1,\ldots,v_i$.
Then for a given  $\boldsymbol{v}$ define $i(j, 1) < \cdots < i(j, n_j)$ by the condition that 
$v_{i(j, 1)} = \cdots =  v_{i(j, n_j)} = j$. 
\end{sloppypar}

With these notations in hand, we temporarily define 
$\langle \langle\bm{a}_1\rangle \cdots \langle\bm{a}_k\rangle \rangle'$ by
\[ \sum_{\begin{subarray}{c}
\boldsymbol{v}\in\{1,\ldots,k\}^n \\
\#_j^n \boldsymbol{v}=n_j ~(j=1,\ldots,k)
\end{subarray}}\hspace{-3mm}
(-1)^{\sum_{j=1}^{k-1}\sum_{t=1}^{n_j}(i(j,t)-t-\sum_{s=1}^{j-1}n_s)}
%\langle y \rangle 
\langle c_1,\ldots,c_n\rangle \]
for $\langle\boldsymbol{a}_1\rangle\cdots\langle\boldsymbol{a}_k\rangle
=\langle a_{1,1},\ldots,a_{1,n_1}\rangle
\cdots\langle a_{k,1},\ldots,a_{k,n_k}\rangle$, 
where %$i_{i_j,1}<\cdots<i_{i_j,n_j}$ is defined by $v_{i_j,1}=\cdots=v_{i_j,n_j}=j$ and
\[ c_i=a_{v_i,\#_{v_i}^i\boldsymbol{v}}
*\prod_{s=v_i+1}^k\prod_{t=1}^{\#_s^i\boldsymbol{v}}a_{s,t}. \]

Then we have 
$\langle \langle\bm{a}_1\rangle \cdots \langle\bm{a}_k\rangle \rangle'
=
\langle \langle\bm{a}_1\rangle \cdots \langle\bm{a}_k\rangle \rangle. $
This is seen from the fact that simplices of both sides 
are in one-to-one correspondence with vectors $\boldsymbol{v}=(v_1,\ldots,v_n)\in\{1,\ldots,k\}^n$,
and the signs correspond to the number of transpositions, modulo 2, 
of a given vector  $\boldsymbol{v}$ from the vector 
$(1, \ldots, 1, 2, \ldots, 2, \ldots, k, \ldots, k )$.

%%25
\begin{figure}[htb]
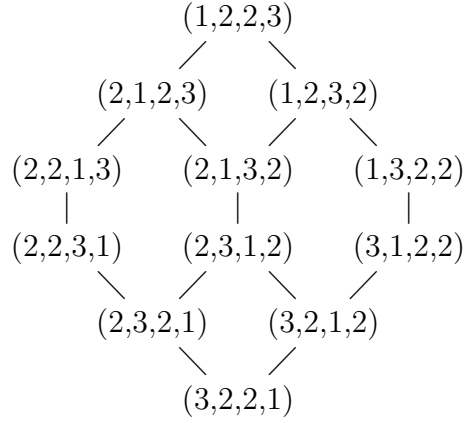

\[ \begin{array}{ccccccccc}
&&&& \makebox(0,5){(1,2,2,3)} &&&& \\
&&& \diagup && \diagdown &&& \\
&& \makebox(0,5){(2,1,2,3)} &&&& \makebox(0,5){(1,2,3,2)} && \\
& \diagup && \diagdown && \diagup && \diagdown & \\
\makebox(0,5){(2,2,1,3)} &&&& \makebox(0,5){(2,1,3,2)} &&&& \makebox(0,5){(1,3,2,2)} \\
| &&&& | &&&& | \\
\makebox(0,5){(2,2,3,1)} &&&& \makebox(0,5){(2,3,1,2)} &&&& \makebox(0,5){(3,1,2,2)} \\
& \diagdown && \diagup && \diagdown && \diagup & \\
&& \makebox(0,5){(2,3,2,1)} &&&& \makebox(0,5){(3,2,1,2)} && \\
&&& \diagdown && \diagup &&& \\
&&&& \makebox(0,5){(3,2,2,1)} &&&&
\end{array} \]
\caption{Boundaries of $\langle a\rangle\langle b,c\rangle\langle d\rangle$}
\label{boundary}
\end{figure}

\begin{example}
{\rm

The terms  of 
$\langle 
\langle a \rangle  \langle b, c \rangle  \langle d \rangle
\rangle$
consist of
$$
\begin{array}{llll}
 \langle a, b, c, d  \rangle, &  \langle b, a*b , c, d  \rangle, & \langle a, b, d, c*d  \rangle, \\
  \langle b,c, a*(bc), d \rangle, &   \langle b, a*b, d, c*d  \rangle, & \langle a,d,b*d,c*d  \rangle, \\
 \langle b,c,d, a*(bcd)  \rangle, &  \langle  b,d,a*(bd), c*d  \rangle, &  \langle d, a*d, b*d, c*d  \rangle, \\
  \langle b,d, c*d, a*(bcd)  \rangle, 
 & \langle d, b*d, a*(bd), c*d  \rangle, & \langle d, b*d, c*d, a*(bcd)  \rangle, 
\end{array}
$$
that, respectively, correspond to the vectors 
\[
\begin{array}{llll}
(1,2,2,3), & (2,1,2,3), & (1,2,3,2), \\
(2,2,1,3), & (2,1,3,2), & (1,3,2,2), \\
(2,2,3,1), & (2,3,1,2), & (3,1,2,2), \\
(2,3,2,1), & (3,2,1,2), &(3,2,2,1).
\end{array}
\]
The graph representing shared faces is depicted in Figure~\ref{boundary}.

}
\end{example}

%%%%%%%%%%%%%%%%%%%%%%%%%%%%%%%%%%%%%%%
%%%%%%%%%%%%%%%%%%%%%%%%%%%%%%%%%%%%%%%
\section{Relationship between MCQ and IIJO}
\label{sec:IIJO}

Let $X=\coprod_{\lambda\in\Lambda}G_\lambda$ be a multiple conjugation quandle, and let $Y$ be an $X$-set.
Let $P_n^{\rm IIJO}(X)_Y$ be the subgroups of $P_n(X)_Y$ generated by the elements of the form
$\langle y\rangle\langle a_1\rangle\cdots\langle a_n\rangle$.
Then $P^{\rm IIJO}_*(X)_Y=(P^{\rm IIJO}_n(X)_Y,\partial_n)$ is a subcomplex of $P_*(X)_Y$.
Let $D_n^{\rm IIJO}(X)_Y$ be the subgroup of $P_n^{\rm IIJO}(X)_Y$ generated by the elements of the forms
\begin{align*}
&\langle y\rangle\langle a_1\rangle\cdots\langle b_1\rangle\langle b_2\rangle\cdots\langle a_n\rangle,
&&\langle y\rangle\langle a_1\rangle\cdots\partial\langle b_1,b_2\rangle\cdots\langle a_n\rangle
\end{align*}
for $a_1,\ldots,a_n \in X$, $b_1,b_2\in G_\lambda$.
We note that the former elements relate with the invariance under the R1 and R4 move, and that the latter elements relate with the invariance under the R5 move and reversing orientation.

\begin{lemma}
$D^{\rm IIJO}_*(X)_Y=(D^{\rm IIJO}_n(X)_Y,\partial_n)$ is a subcomplex of $P^{\rm IIJO}_*(X)_Y$.
\end{lemma}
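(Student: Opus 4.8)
The plan is to verify the inclusion $\partial_n\bigl(D^{\rm IIJO}_n(X)_Y\bigr)\subseteq D^{\rm IIJO}_{n-1}(X)_Y$ separately on the two families of generators. I would write a generator in the product form $\sigma\,\mu\,\tau$, where $\sigma=\langle y\rangle\langle a_1\rangle\cdots\langle a_{j-1}\rangle$ and $\tau=\langle a_{j+1}\rangle\cdots\langle a_n\rangle$ are products of length-one blocks and $\mu$ is the distinguished middle piece ($\langle b_1\rangle\langle b_2\rangle$ for the first family, $\partial\langle b_1,b_2\rangle$ for the second, with $b_1,b_2\in G_\lambda$). Throughout I would use the Leibniz rule $\partial(\alpha\beta)=(\partial\alpha)\beta+(-1)^{|\alpha|}\alpha(\partial\beta)$ and the relation $\partial(\alpha*a)=(\partial\alpha)*a$ (both recorded in the proof that $P_*(X)_Y$ is a chain complex), together with $\partial\circ\partial=0$ and the already-noted fact that $P^{\rm IIJO}_*(X)_Y$ is a subcomplex, so that every boundary in sight automatically lands in $P^{\rm IIJO}_{n-1}(X)_Y$ and only membership in the degeneracy subgroup remains to be checked.

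I would handle the second family first, as it is cleaner. Since $\partial\langle b_1,b_2\rangle$ is itself a boundary it is $\partial$-closed, $\partial(\partial\langle b_1,b_2\rangle)=0$, so applying Leibniz twice to $\sigma(\partial\langle b_1,b_2\rangle)\tau$ kills the middle term and leaves
\[ \partial\bigl(\sigma(\partial\langle b_1,b_2\rangle)\tau\bigr)=(\partial\sigma)(\partial\langle b_1,b_2\rangle)\tau+(-1)^{|\sigma|+1}\sigma(\partial\langle b_1,b_2\rangle)(\partial\tau). \]
In the first summand $\partial\sigma$ is a sum of length-one-block chains, so each term is again a (prefix)$\cdot\partial\langle b_1,b_2\rangle\cdot$(suffix), i.e. a second-family generator. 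In the second summand each boundary term of $\partial\tau$ carries a leftward action $*c$, and here I would invoke the MCQ self-distributivity axiom~(3) and multiplicativity axiom~(4) to check $\bigl(\partial\langle b_1,b_2\rangle\bigr)*c=\partial\langle b_1*c,\,b_2*c\rangle$ with $b_1*c,b_2*c\in G_\mu$, so these terms are again second-family generators. Hence the whole boundary lies in $D^{\rm IIJO}_{n-1}(X)_Y$.

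For the first family I would apply Leibniz to $\sigma\langle b_1\rangle\langle b_2\rangle\tau$, obtaining three pieces. The outer two pieces, from $\partial\sigma$ and from $\partial\tau$, retain the consecutive same-component pair $\langle b_1\rangle\langle b_2\rangle$ (in the $\partial\tau$ piece the pair is transported to $\langle b_1*c\rangle\langle b_2*c\rangle$ inside a single $G_\mu$), so they are first-family generators. The middle piece is $\sigma\,\partial(\langle b_1\rangle\langle b_2\rangle)\,\tau$, and the key computation is the identity
\[ \partial(\langle b_1\rangle\langle b_2\rangle)=\partial\langle b_1,b_2\rangle-\partial\langle b_2,\,b_1*b_2\rangle, \]
obtained by direct expansion once one uses the group relation $b_2(b_1*b_2)=b_1b_2$ valid in $G_\lambda$. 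This rewrites the middle piece as a difference of two second-family generators, so it too lies in $D^{\rm IIJO}_{n-1}(X)_Y$. Combining the two families yields the claimed inclusion, and hence that $D^{\rm IIJO}_*(X)_Y$ is a subcomplex.

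I expect the main obstacle to be the bookkeeping around the leftward-acting operators $*c$ and $*b$: one must confirm, using precisely MCQ axioms~(3) and~(4), that conjugating an inserted group-boundary $\partial\langle b_1,b_2\rangle$ by $*c$ produces the group-boundary $\partial\langle b_1*c,b_2*c\rangle$ of the transported pair (rather than a term escaping the degeneracy subgroup), and that this transported pair stays consecutive and within a single component $G_\mu$. The only genuinely computational input beyond this is the single identity $b_2(b_1*b_2)=b_1b_2$ that collapses the first-family middle term into second-family generators; once these are in place, the sign bookkeeping is routine.
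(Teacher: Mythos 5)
Your proposal is correct and is essentially the paper's own proof: the paper's argument consists precisely of the two identities you isolate, namely $\partial(\langle b_1\rangle\langle b_2\rangle)=\partial\langle b_1,b_2\rangle-\partial\langle b_2,b_1*b_2\rangle$ (via $b_2(b_1*b_2)=b_1b_2$) and $\partial(\partial\langle b_1,b_2\rangle)=0$, with the Leibniz rule and the equivariance $\partial(\sigma*a)=(\partial\sigma)*a$ left implicit. Your version merely spells out the routine bookkeeping (including that $*c$ transports $\partial\langle b_1,b_2\rangle$ to $\partial\langle b_1*c,b_2*c\rangle$ within a single component, by axioms (3) and (4)), which the paper takes for granted.
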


\begin{proof}
This follows from
\begin{align*}
&\partial(\langle b_1\rangle\langle b_2\rangle)
=\partial\langle b_1,b_2\rangle-\partial\langle b_2,b_1*b_2\rangle,
&&\partial(\partial\langle b_1,b_2\rangle)=0
\end{align*}
for $b_1,b_2\in G_\lambda$.
\end{proof}

We put
\[ C_n^{\rm IIJO}(X)_Y=P_n^{\rm IIJO}(X)_Y/D_n^{\rm IIJO}(X)_Y. \]
Then $C_*^{\rm IIJO}(X)_Y=(C_n^{\rm IIJO}(X)_Y,\partial_n)$ is a chain complex.
If $X$ is obtained from a $G$-family of quandles as in Example~\ref{ex:mcq from G-family},
$C_*^{\rm IIJO}(X)_Y$ is the chain complex defined in~\cite{IshiiIwakiriJangOshiro13}.
For an abelian group $A$, we define the cochain complexes
\[ C^*_{\rm IIJO}(X;A)_Y=\operatorname{Hom}(C_*^{\rm IIJO}(X)_Y,A). \]

We note that a natural projection 
$\mathrm{pr}_*:P_*(X)_Y\to P_*^{\rm IIJO}(X)_Y$
does not induce a chain homomorphism
$\mathrm{pr}_*:C_*(X)_Y\to C_*^{\rm IIJO}(X)_Y$,
since IIJO homology theory is  invariant under the invariance for reversing orientations. 
See Table~\ref{table:IIJO-CIST}. 
It is seen, however, that this map induces the chain homomorphism
$\mathrm{pr}_*:C_*^\uparrow(X)_Y\to C_*^{\rm IIJO}(X)_Y$
and the cochain homomorphism
$\mathrm{pr}^*:C^*_{\rm IIJO}(X;A)_Y\to C^*_\uparrow(X;A)_Y$
for an abelian group $A$.
Hence, for a given cocycle of IIJO homology thoery (with some modification for a multiple conjugation quandle as above), we can obtain that of our theory through $\mathrm{pr}^*$.
This implies that our invariant is a generalization of the IIJO quandle cocycle invariant.

\begin{table}[htb]
\begin{center}\begin{tabular}{|c||c|c|c|c|}
\hline
IIJO
& $2$-boundary & degenerate $D^{\rm IIJO}_2(X)_Y$ & cancelled by sign & zero by definition \\
\hline\hline
moves
& R3 & R4($\rightsquigarrow$ R1), R5($\rightsquigarrow$ ori.)& R2 & R6 \\
\hline
\end{tabular}\end{center}

\begin{center}\begin{tabular}{|c||c|c|c|c|}
\hline
MCQ 
& $2$-boundary & degenerate $D_2(X)_Y$ & degenerate $D_2^{\uparrow}(X)_Y$ & cancelled by sign \\
\hline\hline
moves
& R3, R5, R6%($\rightsquigarrow$ $S^1$-ori.) 
& R4($\rightsquigarrow$ R1) & orientation 
& R2 \\
\hline
\end{tabular}\end{center}
\caption{Comparison between IIJO theory and MCQ theory}
\label{table:IIJO-CIST}
\end{table}

%%%%%%%%%%%%%%%%%%%%%%%%%%%%%%%%%%%%%%%
%%%%%%%%%%%%%%%%%%%%%%%%%%%%%%%%%%%%%%%
\section{Towards finding $2$-cocycles}\label{sec:attempt}

We discuss approaches to finding 
$2$-cocycles that are not induced from the IIJO (co)homology theory.
Let $G$ be a group, $M$ a right $G$-module, and $A$ an abelian group.
The module $M$ and the set
$X=M\times G$ ($=\coprod_{x\in M}\{x\}\times G$)
can be considered as a $G$-family of quandles and a multiple conjugation quandle as in Example~\ref{ex:mcq from G-family}, respectively.

We take an $X$-set $Y$ as a singleton $\{y_0\}$ and suppress the notation $\langle y_0 \rangle$.
For a $2$-cocycle $\psi \in P^2(X;A)_Y$, we use the notation $\phi((x,g),(y,h))$ for $\psi(\langle(x,g)\rangle\langle(y,h)\rangle)$ and $\eta_x(g,h)$ for $\psi(\langle(x,g),(x,h)\rangle)$.
Then the $2$-cocycle conditions are written as
\begin{eqnarray*}
%(g,h,k)
(1) & & \eta_x(g,h)+\eta_x(gh,k)=\eta_x(h,k)+\eta_x(g,hk),
\\
%(g,h;k)
(2) & & \phi((x,g),(y,k))+\phi((x,h),(y,k))-\phi((x,gh),(y,k)) \\
& & \hspace{2in}=\eta_x(g,h)-\eta_{x*^ky}(g*k,h*k),
\\
%(g;h,k)
(3) & & \phi((x,g),(y,h))+\phi((x*^hy,g*h),(y,k))=\phi((x,g),(y,hk)),
\\
%(g;h;k)
(4) & & \phi((x,g),(y,h))+\phi((x*^hy,g*h),(z,k)) \\
& & \hspace{.5in}=\phi((x,g),(z,k))+\phi((x*^kz,g*k),(y*^kz,h*k)),
\end{eqnarray*}
where $x,y,z\in M$ and $g,h,k\in G$. 
%%%%%%%%%%
Furthermore, for a $2$-cochain $\psi \in P^2(X;A)_Y$,  
the condition that $\psi$ is a $2$-cochain in $C^2(X;A)_Y$ are written as 
\begin{eqnarray*}
(5) & & \phi((x,g),(x,h)) = \eta_x(g,h)-\eta_{x}(h,g*h), 
\end{eqnarray*}
where $x\in M$ and $g,h\in G$. 
%%%%%%%%%%

Towards constructing MCQ $2$-cocycles that are not from the IIJO homology, 
first we note that if $\phi$ above is an IIJO  $2$-cocycle, then $\phi$ satisfies the conditions
(3), (4), and the condition that the LHS of (2) vanishes. 
By considering $\psi'=\psi - \phi$, we obtain an MCQ $2$-cocycle $\psi'$ that consists only of terms of $\eta_x$ for  $x \in M$.
Thus we first consider such a case in Example~\ref{ex:gp} below. 
In this case, we can take an approach described in Section~\ref{sec:G} for finding MCQ cocycles from group cocycles.

\begin{example}\label{ex:gp}
%%%%%%%%%%%%
%Let $G$ be a group, $M$ a right $G$-module, and $A$ an abelian group. 
%The module $M$ and the set 
%$X=M \times G \ (=\coprod_{x \in M} \{x\} \times G)$
%can be considered as a $G$-family of quandles and 
%a multiple conjugation quandle as in Example~\ref{ex:mcq from G-family} respectively. 
%We take an $X$-set $Y$ as a singleton $\{y_0\}$ and suppress the notation $\langle y_0 \rangle$. 
%%%%%%%%%%%%
For a $2$-cochain $\psi \in P^2(X;A)_Y$ with the assumption 
\begin{eqnarray*}
(0) & & 
 \psi(\langle(x,g)\rangle\langle(y,h)\rangle)\  ( =\phi((x,g), (y,h))) \ = 
 0 , 
\end{eqnarray*}
%$\psi(\langle(x,g)\rangle\langle(y,h)\rangle)=0$, 
we discuss what conditions are needed for the $2$-cochain $\psi$ being 
a $2$-cocycle in $P^2(X;A)_Y$.  
When we use the notation
%$\phi((x,g),(y,h))$ for $\psi(\langle(x,g)\rangle\langle(y,h)\rangle)$ and 
$\eta_x(g,h)$ for $\psi(\langle(x,g),(x,h)\rangle)$, 
the $2$-cocycle conditions are written as
\begin{eqnarray*}
(1) & & \eta_x(g,h)+\eta_x(gh,k)=\eta_x(h,k)+\eta_x(g,hk), \\
(2') & & \eta_x(g,h)-\eta_{x*^k y}(g*k,h*k)=0, 
%(3) & & \eta_x(g,h)-\eta_{x}(h,g*h)=0, 
\end{eqnarray*}
where $x,y\in M$ and $g,h,k\in G$. 
We note that the condition (0) implies (3) and (4). 
Furthermore, for a $2$-cochain $\psi \in P^2(X;A)_Y$ with the assumption (0), 
%$\psi(\langle(x,g)\rangle\langle(y,h)\rangle)=0$,
the condition that $\psi$ is a $2$-cochain in $C^2(X;A)_Y$ are written as 
%$\eta_x(g,h)-\eta_{x}(h,g*h)=0$. 
\begin{eqnarray*}
%(1) & & \eta_x(g,h)+\eta_x(gh,k)=\eta_x(h,k)+\eta_x(g,hk), \\
%(2) & & \eta_x(g,h)-\eta_{x*^ky}(g*k,h*k)=0, \\
(5') & & \eta_x(g,h)-\eta_{x}(h,g*h)=0, 
\end{eqnarray*}
where $x\in M$ and $g,h\in G$. 
Hence if $\psi$ satisfies (0), (1), $(2')$ and $(5')$, 
then $\psi$ is a $2$-cocycle in $C^2(X;A)_Y$ and 
defines an invariant for handlebody-knots.

If $y=x$, then $(2')$ implies $\eta_{x}(g*k,h*k)=\eta_x(g,h)$, 
called the \textit{right invariance} of $\eta_x$.
If $x=0$, then $(2')$ with right invariance implies $\eta_{y \cdot (1-k)} \equiv \eta_{0},$ 
which is another necessary condition for the condition $(2')$. 
Hence if any element in $M$ can be represented by the form $y \cdot (1-k)$ for 
some $y \in M$ and $k \in G$, then we have $\eta_x \equiv \eta_0$ for any $x \in M$.  
In this case, we can check that the $2$-cocycle $\psi$ in $C^2(X;A)_Y$ 
comes from the dual of the composition of the chain homomorphisms 
%$$B_*(X)_Y \stackrel{\text{pr}_2}{\to} B_*(G)_Y \stackrel{\Delta}{\to} B_*^G(G)_Y , $$
$$C_*(X)_Y \stackrel{\text{pr}_2}{\to} C_*(G)_Y \stackrel{\Delta}{\to} C_*^G(G)_Y , $$
where a chain homomorphism $\text{pr}_2$ is induced from 
a natural projection into $2$nd factor and 
the chain homomorphism $\Delta$ was defined in Subsection~\ref{subsec:G}. 
In this case, $\psi$ assigned at a crossing is decomposed into a pair of weights 
$\eta$ corresponding to trivalent vertices as depicted in Figure~\ref{dual}
 $(B)$ and  $(C)$. 
 Hence the resulting invariant is equivalent to the invariant of  
 the trivalent graph obtained by replacing all crossing with vertices, that is embedded 
in the $2$-sphere without crossing. 
Such an embedded graph is equivalent to a circle with small bubbles, 
and has trivial invariant value
($W(D;C)=0$ for any coloring $C$). 
Thus, in this case,  $\psi$ defines a trivial invariant for handlebody-knots
by the group $2$-cocycle $\eta_0$, 
whose cohomology class may not be zero in $H^2_G(G;A)_Y$.  %

If the condition that any element in $M$ can be represented by the form $y \cdot (1-k)$ 
for some $y \in M$ and $k \in G$ is not satisfied, then $\psi$ 
satisfying (0), (1), $(2')$ and $(5')$ may give rise to a non-trivial invariant 
for handlebody-links. 
\end{example}

\begin{example}
In contrast to Example~\ref{ex:gp}, next we consider the case when $\phi$ is not an IIJO $2$-cocycle, 
so that the LHS of (2) does not vanish for $\phi$.

%The natural projection from $B_*(X)_Y$ to $B_*^G(X)_Y$ is not a chain homomorphism in general. 

For any $G$-invariant $A$-bilinear map $f:M^2 \to A$, Nosaka claimed in \cite[Theorem 5.2]{Nosaka13} that the map $\phi_f:X^2\to A$ defined by 
\[ \phi_f((x,g),(y,h)):=f(x-y,y\cdot(1-h^{-1})) \]
satisfies the conditions (3) and (4) above.
For the $G$-invariant $A$-bilinear map $f$, if we can find maps $\eta_x$ such that the conditions (1) and (2) are also satisfied, then we obtain a $2$-cocycle, which may be new.
We remark here that $\phi_f$ itself can be modified in \cite[Corollary 4.7]{Nosaka13} 
(by using an additive homomomorphism form $G$ to some commutative ring) 
so that the conditions (1) and (2) are also satisfied 
under the assumption $\eta_x\equiv0$ for any $x\in M$.

The condition (1) merely says that $\eta_x$ is a usual group $2$-cocycle for any $x \in M$.
The condition (2) is equivalent to
\[ \textrm{($2''$)} \quad \quad 
f(x-y,y\cdot(1-k^{-1}))=\eta_x(g,h)-\eta_{x*^ky}(g*k,h*k) \]
%by using $A$-bilinearity of $f$.
from the definition of $f$. 
If $y=x$, 
then ($2''$) implies that $\eta_x$ is right invariant in the sense that 
$\eta_x(g*k,h*k)=\eta_x(g,h)$ as above. 
%then ($2''$) implies $\eta_{x}(g*k,h*k)=\eta_x(g,h)$, the  right invariance  of $\eta_x$.
If $y=0$, then ($2''$) with the right invariance implies $\eta_{x\cdot k}\equiv\eta_x$, 
called the 
\textit{orbit dependence} of $\eta_x$.
Thus we obtain 
these two necessary conditions for the condition ($2''$).

We examine  the following specific examples. 
For a prime number $p$, let $G=\SL(2,\mathbb{Z}_p)$ that acts on $M=(\mathbb{Z}_p)^2$ from the right.
For $A=\Z_p$, the map $f:M^2 \to A$ defined by
$f(x,y):=\det\begin{pmatrix}x \\ y\end{pmatrix}$ is a $G$-invariant $A$-bilinear map, where $x,y\in M$ are row vectors on which $G$ acts on the right, and $\det$ denotes the determinant.
This setting is motivated from \cite[Proposition 4.5]{Nosaka13}.

\begin{itemize}
\item
First,  
we consider the case where $p=2$.
Define $m:M\to A$ by
\[ m(x):=\begin{cases}
0 & \text{if $x=0$} \\
1 & \text{if $x \neq 0$}
\end{cases}. \]
Then we can check that
\[ \phi_f((x,g),(y,h))=-m(x)+m(x*^hy) \]
for any $x,y \in M$ and $g,h\in G$.
Take $\eta_x(g,h)$ to be $m(x)$ for any $x\in M$ and $g,h\in G$.
Then we can show that the $2$-cochain $\psi$, defined by $\phi_f$ and $\eta_x$, 
is a $2$-coboundary as follows.
Define a $1$-cochain $\tilde{m}\in P^1(X;A)$ by $\tilde{m}(\langle(x,g)\rangle):=m(x)$.
Then $2$-coboundary $\delta \tilde{m} \in P^2(X;A)$ are written as
\begin{align*}
&(\delta\tilde{m})(\langle(x,g)\rangle\langle(y,h)\rangle)=-m(x)+m(x*^hy), \\
&(\delta\tilde{m})(\langle(x,g),(x,h)\rangle)=m(x),
\end{align*}
where $x,y\in M$ and $g,h \in G$.
This implies that $\psi=\delta\tilde{m}$.

\item
Second, 
we consider the case where $p>2$.
If $x=(0,0)$ and 
$k=\begin{pmatrix}-1 & 0 \\ 0 & -1\end{pmatrix}$,
the condition ($2''$) implies $\eta_{2y}(g,h)=\eta_0(g,h)$ for any $y\in M$ and $g,h\in G$.
Since $p$ is odd, we have that $\eta_x\equiv\eta_0$ for any $x\in M$.
If we substitute $y=(1,0)$ and
$k=\begin{pmatrix}1 & -1 \\ 0 & 1\end{pmatrix}$ for $(2'')$, then LHS is $1$ and RHS is $0$, which turns out to be a contradiction.
Hence there is no choice of $\eta_x$ such that the condition (1) and ($2''$) are satisfied.
\end{itemize}
\end{example}

Although our attempts have not resulted in new non-trivial $2$-cocycles, it appears useful to record 
our approaches and facts we have found, for future endeavors towards constructing 
new cocycles using these approaches. 
Further studies are desirable on this homology theory, as it unifies group and quandle 
homology theories for a structure of multiple conjugation quandle, that have ample interesting examples
and applications to handlebody-links.

%%%%%%%%%%%%%%%%%%%%%%%%%%%%%%%%%%%%%%%
%%%%%%%%%%%%%%%%%%%%%%%%%%%%%%%%%%%%%%%
\section*{Acknowledgements}

The first author was partially supported by Simons Foundation.
The second author was partially supported by JSPS KAKENHI Grant Number 24740037.
The third author was partially supported by  (U.S.) NIH R01GM109459-01.
The last author was partially supported by JSPS KAKENHI Grant Number 26400082.
The authors are grateful to Daniel Moskovich 
 for valuable comments on exposition.

\end{document}